\def\cC{\mathcal{C}}
\def\cE{\mathcal{E}}
\def\cF{\mathcal{F}}
\def\cG{\mathcal{G}}
\def\cO{\mathcal{O}}
\def\cP{\mathcal{P}}
\def\cR{\mathcal{R}}
\def\cU{\mathcal{U}}
\def\cV{\mathcal{V}}
\def\cZ{\mathcal{Z}}
\def\CC{\mathbb{C}}
\def\RR{\mathbb{R}}
\def\ZZ{\mathbb{Z}}
\newtheorem*{centralthm}{Main Theorem}
\def\Aut{\mathrm{Aut}}
\def\Iso{\mathrm{Iso}}
\def\End{\mathrm{End}}
\def\id{\mathrm{id}}
\newtheorem{theorem}{Theorem}[section]
\newtheorem{lemma}[theorem]{Lemma}
\newtheorem{corollary}[theorem]{Corollary}
\newtheorem{proposition}[theorem]{Proposition}
\theoremstyle{definition}
\newtheorem{definition}[theorem]{Definition}
\newtheorem{example}[theorem]{Example}
\newtheorem{remark}[theorem]{Remark}
\begin{document}
\bibliographystyle{plain}

\title{The Oka Principle in Higher Twisted K-Theory}
\author{Haripriya Sridharan}
\date{\today}

\maketitle

\begin{abstract}
\noindent The Oka principle is a heuristic in complex geometry which states that, for a wide class of complex-analytic problems concerning Stein spaces, any obstruction to finding a holomorphic solution is purely topological. A classical theorem of H.~Grauert implies that for a reduced Stein space $X$, the natural map $K^{0, \cO}(X) \to K^{0, \cC}(X)$ from ordinary holomorphic K-theory $K^{0, \cO}(X)$ to ordinary topological K-theory $K^{0, \cC}(X)$ is an isomorphism: this is the basic manifestation of the Oka principle in K-theory. 
In this paper, we generalise this theorem to higher twisted K-theory. For a reduced Stein space $X$ and a torsion class $\alpha \in H^3(X,\mathbb{Z})$, we prove that the natural map $K^{-n,\mathcal{O}}_\alpha(X) \to K^{-n,\mathcal{C}}_\alpha(X)$ is an isomorphism for all $n \geq 0$. We introduce the first definition of higher twisted holomorphic K-theory in the literature, defined through a simplicially enriched version of Quillen's $S^{-1}S$ construction. Our parallel construction for topological higher twisted K-theory is a new formulation which is compatible with existing theory. The proof of the main theorem employs Cartan-Grauert cohomological methods and an equivalence, which we prove, between the simplicial symmetric monoidal categories of holomorphic and topological $\alpha$-twisted vector bundles. 
\end{abstract}

\section{Introduction}

The inherent rigidity of holomorphic maps emerges at the most fundamental level of complex analysis -- take, for example, Picard's little theorem, which asserts that the image of a non-constant holomorphic function on the complex plane can omit at most one point. This rigidity makes the holomorphic obstructions to complex-analytic problems appear, a priori, particularly formidable. Remarkably, however, certain problems in complex analysis concerning Stein manifolds (or more general Stein spaces) can be solved by reducing the holomorphic obstructions to purely topological ones, rendering the problem more tractable than one might reasonably expect. 

Here is a fundamental instance of this phenomenon, heuristically referred to as the Oka principle,  in K-theory. In complex geometry, the ordinary K-theory of a reduced Stein space $X$ is the holomorphic K-theory $K^{\cO}(X)$, defined as the Grothendieck group of the abelian monoid of holomorphic vector bundles on $X$. To mark the distinction with classical topological K-theory, the Grothendieck group of topological vector bundles on $X$ is denoted by $K^{\cC}(X)$. The famous Oka-Grauert theorem, which gives a bijection between the isomorphism classes of holomorphic $G$-principal bundles and topological $G$-principal bundles over $X$ for a complex Lie group $G$, implies that the natural map $K^{\cO}(X) \to K^{\cC}(X)$ is an isomorphism of abelian groups. This is the \textit{basic Oka principle in K-theory}.

 The purpose of this work is to elevate this result to the setting of higher, twisted K-theory. In more precise terms, this involves, for an arbitrary torsion class $\alpha \in H^3(X, \ZZ)$: 
 \begin{enumerate}
    \item providing a definition of higher twisted holomorphic K-theory $K_\alpha^{-n, \cO}(X), n \geq 0,$ which generalises $K^\cO(X) = K^{0, \cO}(X)$, the first such definition in the complex geometry literature,
    \item presenting an analogous definition of higher twisted topological K-theory $K_\alpha^{-n, \cC}(X),$ $n \geq 0,$ that is consistent with existing versions of the theory, and
    \item proving the Oka principle in higher, $\alpha$-twisted K-theory; that is, the inclusion $\cO \hookrightarrow \cC$ induces an isomorphism of groups,\[K_\alpha^{-n, \cO}(X) \to K_\alpha^{-n, \cC}(X),\] for all $n \geq 0$. 
 \end{enumerate}
 
 The author wishes to sincerely thank Finnur L\'arusson and Christian Haesemeyer for their invaluable guidance, discussions and insights throughout the development of this work. 

\subsection{Historical context}This interdisciplinary paper lies at the confluence of Oka theory and twisted K-theory, two vibrant areas of mathematical research with rich, classical roots, and long histories shaped by the major currents of modern mathematics -- algebra, topology, analysis and geometry. In this section, we present a brief historical account of the ideas central to our work.

The field of Oka theory originated in a pioneering paper of K.~Oka of 1939 \cite{10.32917/hmj/1558749869}, where he showed that a holomorphic line bundle on a domain of holomorphy is holomorphically trivial if and only if it is topologically trivial. Oka's surprising insight was followed in 1958 by H.~Grauert's significant generalisation \cite{grauert1958} to principal bundles over reduced Stein spaces, with arbitary complex Lie groups for fibres. In his famous and elegant exposition \cite{cartan1958espaces} of Grauert's work, H.~Cartan provided a slight strengthening of the theorem so as to apply to generalised principal bundles over reduced Stein spaces.  These important ideas evolved into a general heuristic principle known as the \textit{Oka principle}: that on a Stein space, complex-analytic problems which are cohomologically formulated have only topological obstructions \cite{forstneric2010survey}. 
 
Three decades later, modern Oka theory was born with Gromov's seminal 1989 paper \cite{gromov1989oka}, which marked a shift away from cohomological methods towards more geometric, homotopy-theoretic methods. These methods were better suited to the discovery of more general Oka principles, and capable of handling the challenging new questions that were emerging in Stein geometry -- such as Forster's conjecture of 1970 \cite{forster1970plongements}, solved for ${n \geq 2}$ by Eliashberg and Gromov in 1992 \cite{eliashberg1992embeddings}, and later improved for Stein manifolds of odd dimension by J.~Sch\"{u}rmann \cite{schurmann1997embeddings}. Since the year 2000, Oka theory has developed significantly through the ongoing efforts of Forstneri\v{c} and others, and forms an active subfield of complex geometry in its own right; see the book \cite{forstnerivc2011stein} for a unified and up-to-date account of the modern theory. 

Elsewhere in mathematics, around the same period that Grauert introduced his Oka principle, K-theory found its origins in Grothendieck's work on his generalisation of the Riemann-Roch theorem \cite{borel1958theoreme}. For an algebraic variety $X$, Grothendieck introduced the abelian group $K(X)$ -- in modern notation, $K^0(X)$, the \textit{Grothendieck group} of $X$ -- generated by isomorphism classes $[E]$ of locally free sheaves on $X$, subject to relations $[E] = [E'] + [E'']$ for each short exact sequence $0 \to E' \to E \to E'' \to 0$. Tensor product of sheaves endows this group with the structure of a commutative ring. When $X$ is an affine variety with coordinate ring $R$, the Grothendieck group is denoted by $K(R)$; or, in modern notation, $K_0(R)$. In this case, since locally free sheaves are equivalent to finitely generated projective $R$-modules, and every short exact sequence splits, the defining relation is simply $[E' \oplus E''] = [E'] + [E'']$.
Building on Grothendieck's work, M.~Atiyah and F.~Hirzebruch \cite{atiyah1961vector} translated the construction to topology, defining $K^0(X)$, for a suitable topological space $X$, using topological vector bundles in place of locally free sheaves. Using Bott periodicity and suspensions, Atiyah and Hirzebruch extended it to a graded ring $K^*(X)$, known as higher topological K-theory, which became one of the first and most influential examples of a generalised cohomology theory  -- that is, one which satisfies all the Eilenberg-Steenrod axioms except the dimension axiom. 

The development of higher topological K-theory raised the question: could there be analogous higher algebraic K-groups $K_n(R)$, and how should they be defined? Perhaps unsurprisingly, progress on this question was substantially slower, owing to the rigidity of the algebraic setting. The first accepted definition of $K_1(R)$ was provided by H.~Bass in \cite{bass1964k}, which was followed by J.~Milnor introducing $K_2(R)$ in \cite{milnor1971introduction}. 

The breakthrough finally came in the early 1970s, with D.~Quillen's introduction of multiple, non-trivially equivalent definitions of higher algebraic K-theory. The first was the $+$-construction, announced in \cite{quillen1970cohomology}. For a ring $R$, the $+$-construction gives abelian groups $K_n(R)$ for all $n \geq 1$, matching Bass' definition for $n=1$ and Milnor's definition for $n=2$.
This was shortly followed by a second construction due to Quillen -- reported later by D.~Grayson in \cite{grayson2006higher} -- known as the $S^{-1}S$ construction, or the localisation construction, for the higher algebraic K-theory of a symmetric monoidal category $S$. The definitions of twisted K-theory we use in this paper are derived from this construction, which carries various appealing features. Firstly, the $S^{-1}S$ construction naturally generalises the way one introduces formal inverses to complete an abelian monoid into a group; in other words, it lifts Grothendieck's group completion construction to symmetric monoidal categories. Secondly, it does not require $K_0$ to be considered separately from the higher groups, as is necessary with the $+$-construction. It is also an example of an infinite loop space machine -- a construction which produces a group completion of a homotopy-associative H-space and gives it the structure of an infinite loop space. It is well known that infinite loop space machines accept topological inputs \cite[pp.~331, 338]{weibel2013k}; this is an important fact, as we enrich the relevant symmetric monoidal categories in topological spaces to prove the main theorem.

The most influential and general of Quillen's definitions of higher K-theory is the $Q$-construction, published in 1972 \cite{quillen2006higher}, allowing for categories in which exact sequences do not need to split. We do not work directly with this definition, as exact sequences do split in the categories we are concerned with. It is, however, an amazing fact that all of these definitions of K-theory agree \cite{grayson2006higher, quillen2006higher}. The early history of K-theory is rich, and we have only touched on the parts which relate most closely with the mathematics in this paper; for a more detailed history see \cite{weibel1999development} or \cite{grayson2013quillen}. 

Twisted K-theory, which originated in the work of P.~Donovan and M.~Karoubi \cite{donovan1970graded}, and J.~Rosenberg \cite{rosenberg1989continuous}, is a sophisticated extension of classical K-theory that has elicited strong research interest in both mathematics and mathematical physics over the past two decades, particularly following Atiyah and Segal's foundational geometric reformulations in \cite{atiyah2004twisted} and \cite{atiyah2006twisted}. To each cohomology class $\alpha \in H^3(X, \ZZ)$ of a topological space $X$ is associated an $\alpha$-twisted K-group $K^0_\alpha(X)$. In keeping with the richly interdisciplinary character of K-theory, the group $K^0_\alpha(X)$ may be motivated in various ways, and admits multiple definitions which are equivalent in a deep, non-trivial manner. These include extensively studied formulations in terms of bundle gerbes \cite{bouwknegt2002twisted}, $C^*$-algebras, and Fredholm operators (cf. \cite{atiyah2004twisted} and \cite{atiyah2006twisted}). In our work, we adopt -- and generalise to higher K-theory -- the relatively direct approach based on $\alpha$-twisted vector bundles, introduced by Karoubi in \cite{karoubi2012twisted}, where the twisting cohomology class $\alpha \in H^3(X, \ZZ)$ is torsion.  An $\alpha$-twisted vector bundle is, to borrow A.~C\u{a}ld\u{a}raru's phrasing \cite{caldararu2000derived}, a collection of vector bundles and gluing functions `with the apparent defect that they don't quite match up'; that is, the cocycle condition includes the factor $\alpha$. (The precise definition we adopt is necessarily a little more intricate than Karoubi's definition -- cf. Definition \ref{def: cat twisted} and Remark \ref{rem: comp with Karoubi}.) Analogously to classical topological K-theory, $K^0_\alpha(X)$ is defined as the Grothendieck group of isomorphism classes of $\alpha$-twisted vector bundles on $X$. 

The aspects of twisted K-theory one focuses on tend to reflect the broader mathematical context and intended applications of the work -- while non-torsion twisting classes arise naturally in string theory \cite{karoubi2008twisted, atiyah2004twisted}, our focus on torsion twisting classes aligns more closely with the study of twisted sheaves, Azumaya algebras and the Brauer group in algebraic geometry. 
The following basic question can be illuminating: what is the obstruction to lifting a holomorphic $PGL_n\CC$-principal bundle, over a complex analytic space $X$, to a $GL_n\CC$-principal bundle? A holomorphic $PGL_n\CC$-principal bundle is, up to isomorphism, equivalent to an element $[E] \in H^1(X, PGL_n\cO)$. The long exact cohomology sequence $H^1(X, \cO^*) \to H^1(X, GL_n\cO) \to H^1(X, PGL_n\cO) \xrightarrow{\partial} H^2(X, \cO^*)$ reveals that for $[E] \in H^1(X, PGL_n\cO)$, the obstruction to finding a lift $[F] \in H^1(X, GL_n\cO)$ is precisely the $n$-torsion element $\alpha = \partial([E])$. This observation leads to the study of the cohomological Brauer group $\mathrm{Br}'(X) = H^2(X, \cO^*)$, which for Stein spaces is isomorphic to $H^3(X, \ZZ)$. It is a well-known result that the Brauer group $\mathrm{Br}(X)$ of isomorphism classes of Azumaya algebras under tensor product injects into $\mathrm{Br}'(X)$, and the image corresponds precisely with the obstructions $\partial([Y])$ arising in this context, for all $n \geq 0$ \cite[Thm. 1.18]{caldararu2000derived}. While our focus lies with torsion $\alpha$, we note that Karoubi's definition includes the non-torsion case by considering, instead of vector bundles, bundles with infinite-dimensional Hilbert spaces for fibres \cite{karoubi2012twisted}.
\subsection{Structure and outlook} Outside of this introduction, the paper is organised into four sections. 
 Section 2 provides some background results, including, for the reader's benefit, a complete proof of the category-theoretic equivalence between ordinary and generalised principal bundles, previously only sketched by M.~Murray in the note \cite{murray2010bundle}. We also prove a parametric version of the Cartan-Oka-Grauert principle. While the result is now understood as a special case of a more general Oka principle within modern, Gromov-style Oka theory, we derive it directly here using classical methods of Cartan and Grauert.

Section 3 develops the homotopy-theoretic machinery needed for our main results. We study a simplicial enrichment of Quillen's $S^{-1}S$ construction and prove that it preserves weak equivalences between simplicial symmetric monoidal categories under certain mild conditions (analogous to those in Quillen's discrete construction \cite{grayson2006higher}). This yields the key consequence that weak equivalences between such categories induce isomorphisms of K-groups, which we apply in Section 4 to establish the Oka principle.

Section 4, which forms the core of the paper, establishes the Oka principle for higher twisted K-theory. We construct simplicially enriched, symmetric monoidal categories of holomorphic and topological $\alpha$-twisted vector bundles over Stein spaces. The definitions we provide are necessarily more intricate than Karoubi's definition in \cite{karoubi2012twisted}, as holomorphic constructions do not have available to them the flexibility of topological ones. Using the simplicially enriched $S^{-1}S$ contruction we introduce two new definitions: higher holomorphic $\alpha$-twisted K-theory for a torsion twisting class $\alpha$ (the first such definition in the literature beyond the $K^0$ case, defined in \cite{mathai2003chern} using different methods), and a new formulation of higher, topological $\alpha$-twisted K-theory that can be shown consistent with existing versions. We then prove the main theorem: that the inclusion of the category of holomorphic $\alpha$-twisted vector bundles into the category of topological $\alpha$-twisted vector bundles induces an isomorphism between their respective higher $\alpha$-twisted $K$-groups. 

In a final short section, Section 5, we outline how our definition of higher, $\alpha$-twisted topological K-theory coincides with existing, well-known definitions in the literature. As such, the Oka principle that is the subject of this work -- beyond providing yet another remarkable instance of the relationship between complex geometry and topology -- fits in a deep sense within the rich, broader setting of twisted K-theory.

This work opens several promising avenues for future research. One natural direction would be to explore whether the Oka principle extends to non-torsion twisting classes in $H^3(X,\ZZ)$ -- a question that may be of particular interest for mathematical physics applications; in which case, the definition of twisted K-theory would need to be based on infinite-dimensional bundles, with Hilbert spaces for fibres. 
Another natural research direction which would both complement and give new perspectives to our work, is examining how the Oka principle manifests through other K-theory machines, such as Quillen's $Q$-construction, or Waldhausen's $wS_{\bullet}$-construction. 

The connection to algebraic K-theory deserves particular exploration: to what extent, and in which special cases, does the Oka principle hold when the categories are not topologically enriched? This is a subtle question -- Ivarsson and Kutzschebauch showed in \cite{ivarsson2012holomorphic} that an untwisted version of the Oka principle fails under certain situations even at $K_1$, when working with discrete categories. We crucially depend on enrichment in our work -- the key results of Section 3 involve bisimplicial sets that arise from simplicial enrichment. Understanding more precisely how this enables our results, and the degree to which Oka principle does or does not hold in algebraic twisted K-theory, is a topic with deep potential.

\section{Background}

\subsection{Higher K-theory and Quillen's $S^{-1}S$ construction} \label{sub: s-1s}

In this paper, we employ an enriched version of Quillen's $S^{-1}S$ construction to define and study holomorphic and topological higher twisted K-theory. The $S^{-1}S$ definition for higher K-theory provides what is perhaps the most natural way to extend the Oka-Grauert principle for generalised principal bundles to higher twisted K-theory.

We recall here the classical version of the $S^{-1}S$ construction. Let $(S,\square)$ be a small symmetric monoidal category whose morphisms are all isomorphisms. The $S^{-1}S$ construction produces a new symmetric monoidal category in which objects of $S$ become invertible, up to homotopy, under $\square$. Objects in $S^{-1}S$ are pairs $(x,y)$ of objects in $S$, interpreted as formal differences $x-y$. A morphism $(x,y) \to (z,w)$ is an equivalence class $[s,f,g]$, where $s \in \text{ob}(S)$, $f: s \square x \to z$, and $g: s \square y \to w$ are morphisms in $S$. Two triples $(s,f,g)$ and $(s',f',g')$ are equivalent when there exists an isomorphism $\alpha: s \to s'$ such that $f = f' \circ (\alpha \square \text{id}_x)$ and $g = g' \circ (\alpha \square \text{id}_y)$.

\begin{theorem}[Quillen \cite{grayson2006higher}]
When translations in $S$ are faithful, $|NS^{-1}S|$ is the group completion of $|NS|$ as an H-space, where $N$ is the nerve functor and $|\cdot|$ denotes geometric realisation.
\end{theorem}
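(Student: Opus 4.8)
The plan is to reduce the statement to the known group-completion theorem for symmetric monoidal categories by verifying the hypotheses carefully. Quillen's $S^{-1}S$ construction is set up precisely so that $S^{-1}S$ acts on $S$ (via $\square$ in the first coordinate), and the natural functor $S \to S^{-1}S$ sending $x \mapsto (x, 0)$ — where $0$ denotes the unit object — exhibits $S^{-1}S$ as a localisation. First I would record the standing hypothesis: every morphism of $S$ is an isomorphism, $S$ is small, and translations $x \mapsto s \square x$ are faithful; this last condition is exactly what is needed to ensure the action is sufficiently rigid that the homotopy colimit computing $|NS^{-1}S|$ behaves well. The strategy is not to reprove group completion from scratch but to identify $|NS^{-1}S|$ with the relevant homotopy-colimit or telescope construction.

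The key steps, in order, would be: (1) Observe that $\pi_0|NS|$ is an abelian monoid under $\square$ and that the map $\pi_0|NS| \to \pi_0|NS^{-1}S|$ is a map of abelian monoids whose target is a group — invertibility of $(x,y)$ is witnessed by $(y,x)$, with the canonical morphisms built from the symmetry and unit isomorphisms of $S$. (2) Show that $|NS^{-1}S|$ has the correct homology, i.e. that $H_*(|NS^{-1}S|)$ is the localisation $H_*(|NS|)[\pi_0^{-1}]$. This is the heart of the matter and follows Quillen's argument: one filters $NS^{-1}S$ by the "denominator" object $s$, uses the faithfulness of translations to identify the relevant comma categories up to homotopy with copies of $NS$, and runs a direct-limit/Serre-spectral-sequence argument to see that stabilising over all $s$ inverts the action of $\pi_0$. (3) Conclude via the standard fact that a map of homotopy-commutative $H$-spaces $M \to N$ with $N$ grouplike, inducing an isomorphism $H_*(M)[\pi_0 M^{-1}] \xrightarrow{\sim} H_*(N)$, is a group completion; combine with (1).

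The main obstacle will be step (2): the homological computation identifying $H_*(|NS^{-1}S|)$ with the localised homology of $|NS|$. The subtlety is that $S^{-1}S$ is built from equivalence classes of triples $[s,f,g]$, so one must check that the filtration by $s$ is compatible with composition and that the faithfulness of translations makes the transition maps in the resulting directed system into the maps "multiply by $[s] \in \pi_0$", with no extra automorphisms creeping in to spoil the colimit. Once the directed system is correctly identified, the colimit computation and the passage to the $H$-space statement are formal. I would present this as an application and citation of \cite{grayson2006higher}, sketching steps (1) and (3) in a line or two each and indicating that (2) is the content imported from Quillen's construction, rather than grinding through the spectral sequence in full.
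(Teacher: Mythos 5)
The paper does not prove this theorem — it is stated as a citation to Quillen's result via Grayson \cite{grayson2006higher}, which is also the route your proposal ultimately takes, and your sketch of the underlying homology-localisation argument (filter by the denominator object, use faithfulness of translations to control the directed system, conclude $H_*(|NS^{-1}S|) \cong H_*(|NS|)[\pi_0^{-1}]$, then apply the group-completion criterion for $H$-spaces) is accurate in outline. One small inversion worth fixing: it is $S$ that acts on $S^{-1}S$ by translation (not $S^{-1}S$ on $S$), which is what makes $|NS^{-1}S|$ a module over the $H$-space $|NS|$ and sets up the fibration/Serre spectral sequence in Quillen's proof; the paper's convention for the localisation functor $S \to S^{-1}S$ is also $x \mapsto (e,x)$ rather than $(x,0)$, though that choice is immaterial.
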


Based on this construction, the higher K-theory groups of $S$ are defined as:

$$K_n(S) = \pi_n(|NS^{-1}S|), \quad n \geq 0.$$

These groups coincide with the K-groups of Quillen's $+$-construction when $S$ is the category of finitely generated projective modules over a ring. However, in the context of topological or holomorphic higher twisted K-theory, the discrete setting of $S^{-1}S$ construction above is not sufficient, as the morphism sets carry essential topological or complex-analytic structure. To address this, we will need to enrich the construction in the category of simplicial sets (see the following section); or equivalently, since we are concerned primarily with weak equivalences, in topological spaces. 
\subsection{Generalised principal bundles}

The proof of the main theorem in this work -- the Oka principle in higher twisted K-theory -- relies centrally on the notion of a \textit{generalised principal bundle}, formally introduced by Cartan \cite{cartan1958espaces} in his exposition of the Oka-Grauert principle for principal bundles on a Stein space. Its essential distinction from an ordinary principal bundle lies in the fact that a generalised principal bundle is acted on by structure group bundle, rather than by a fixed structure group; locally, however, it remains isomorphic to an ordinary principal bundle. Here is a more precise definition:

\begin{definition}
  Let $X$ be a Hausdorff, paracompact space, and $p: E \to X$ a topological group bundle with base $X$. An \textit{$E$-principal bundle} is a fibre bundle $\pi: P \to X$ together with a continuous map $\rho: P \times_X E \to P$, such that:
\begin{enumerate}
\item each induced map $\rho_x: P_x \times E_x \to P_x$ on fibres is a right action of $E_x$ on $P_x$, and
\item for each $x \in X$, there exists a neighborhood $U \subset X$ over which $E$ is trivialisable, and $P|_U$ is isomorphic to a principal $G$-bundle with respect to the action of $G$ induced by a trivialisation $E|_U \simeq U \times G$.
\end{enumerate}

It can be shown that for a topological group $G$ and a Hausdorff, paracompact space $X$, the category of generalised principal bundles over $X$ with varying $G$-group structure bundles is equivalent to the category of ordinary principal bundles over $X$ with structure group $G \rtimes \Aut(G)$ (Theorem \ref{Murray correspondence}). This fundamental correspondence, noted by M.~Murray \cite{murray2010bundle}, allows us to gain several insights into generalised principal bundles through classical principal bundle theory. For the reader's benefit, we provide here an account of this result, which, though known to experts, does not appear to have a complete presentation in the literature. 

We first detail the natural correspondence between $G$-group bundles and $\Aut(G)$-principal bundles. 
While the structures we work with in this section are assumed to be topological, we note with importance that everything presented here can be adapted to the smooth or holomorphic setting with weak sufficient conditions placed upon $G$. This is to ensure that if $G$ is a smooth or complex Lie group, then $\Aut(G)$ is a smooth or complex Lie group respectively. One such example of a sufficient condition is that $\pi_0(G)$ is finitely generated (see \cite[Section 15.4]{hilgert2012structure}). 

\begin{proposition}\label{prop:pre} The category of $G$-group bundles over a base $X$ is equivalent to the category of $\Aut(G)$-principal bundles over $X$. 
\end{proposition}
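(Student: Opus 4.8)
The plan is to produce mutually quasi-inverse functors between the two groupoids (morphisms in each being bundle isomorphisms over $X$). In one direction, send an $\Aut(G)$-principal bundle $\pi\colon P \to X$ to the associated fibre bundle $F(P) := P \times_{\Aut(G)} G$, where $\Aut(G)$ acts on $G$ by evaluation, $\phi\cdot g = \phi(g)$. Since this action is through group automorphisms, the fibrewise operation $[p,g]\cdot[p,h] := [p,gh]$ is well defined and continuous, with unit $[p,e]$ and inverse $[p,g^{-1}]$, so $F(P)$ is a $G$-group bundle; an $\Aut(G)$-equivariant map of principal bundles over $X$ is automatically an isomorphism and visibly induces an isomorphism of the associated group bundles, so $F$ is functorial. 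In the other direction, send a $G$-group bundle $q\colon E \to X$ to the bundle $H(E) := \Iso(\underline{G}, E)$ whose fibre over $x$ is the space of topological-group isomorphisms $G \xrightarrow{\sim} E_x$, topologised so that over any open $U$ carrying a trivialisation $\psi\colon E|_U \xrightarrow{\sim} U\times G$ of group bundles the map $f \mapsto (q(f), \mathrm{pr}_G\circ\psi\circ f)$ is a homeomorphism onto $U\times\Aut(G)$. We equip $H(E)$ with the right $\Aut(G)$-action by precomposition, $f\cdot\phi := f\circ\phi$, and a morphism $\beta\colon E\to E'$ induces $H(\beta)\colon f\mapsto \beta\circ f$, so $H$ is functorial as well.

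The main work is to check that $H(E)$ is genuinely a principal $\Aut(G)$-bundle. Local triviality is inherited from $E$: the trivialisations above identify $H(E)|_U$ with $U\times\Aut(G)$ equivariantly, and on an overlap $U\cap U'$ the change of trivialisation has the form $(x,\phi)\mapsto(x,\, g_{UU'}(x)\circ\phi)$, where $g_{UU'}\colon U\cap U'\to\Aut(G)$ is the transition cocycle of $E$; this is continuous precisely because $\Aut(G)$ is a topological group, which is ensured by our standing hypotheses on $G$. Freeness and fibrewise transitivity of the action are then immediate, since $\Aut(G)$ acts simply transitively by precomposition on the set of group isomorphisms $G\xrightarrow{\sim} E_x$. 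I expect this to be the main obstacle, in that it is the one place where one must be careful about topologies: one must check that the compact–open topology on the fibres agrees with the topology glued from trivialisations, and that the resulting cocycle into $\Aut(G)$ is continuous. Once the principal bundle structure is in place, everything else is formal.

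Finally I would exhibit the two natural isomorphisms. For $H\circ F$, the map $P\to H(F(P)) = \Iso(\underline{G},\, P\times_{\Aut(G)}G)$ sending $p\in P_x$ to the isomorphism $g\mapsto[p,g]$ is $\Aut(G)$-equivariant (since $p\phi \mapsto (g\mapsto[p\phi,g]=[p,\phi(g)])$, i.e.\ precomposition by $\phi$), a fibrewise homeomorphism, and continuous by inspection on a local trivialisation, hence an isomorphism of principal bundles, natural in $P$. For $F\circ H$, the evaluation map $H(E)\times_{\Aut(G)}G\to E$, $[f,g]\mapsto f(g)$, is well defined because $[f\phi,g]=[f,\phi(g)]$ and $(f\phi)(g)=f(\phi(g))$, is a fibrewise group isomorphism onto $E_x$, and is continuous, hence an isomorphism of group bundles, natural in $E$. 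The natural isomorphisms $H\circ F\cong\id$ and $F\circ H\cong\id$ show that $F$ and $H$ are mutually quasi-inverse equivalences of categories, which is the assertion of the proposition.
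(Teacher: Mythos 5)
Your proposal is correct and follows essentially the same route as the paper: one direction is the frame bundle $\Iso(\underline G, E)$ (which the paper calls $F(\cG)$ and you call $H(E)$), the other is the associated bundle $P\times_{\Aut(G)}G$, and the two natural isomorphisms $p\mapsto(g\mapsto[p,g])$ and $[f,g]\mapsto f(g)$ are the same ones the paper writes down. (Your slightly more careful remarks about the compact-open topology and the continuity of the cocycle into $\Aut(G)$ are a welcome addition, but note that the paper reserves the letter $F$ for the frame-bundle construction, not for the associated-bundle functor.)
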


In short, the equivalence is forged by sending a $G$-group bundle to its frame bundle, and an $\Aut(G)$-principal bundle to the associated bundle for the natural left action of $\Aut(G)$ on $G$. Expressly, this means the following. 

Let $\cG$ be a $G$-group bundle over $X$. Recall that the \textit{frame bundle} $F(\cG)$ of $\cG$, given by the disjoint union \[F(\cG) = \coprod_{x \in X} \mathrm{Iso}(G, \cG_x),\] where $\mathrm{Iso}(G, \cG_x)$ is the set of topological group isomorphisms from $G$ to the fibre $\cG_x$ of $\cG$ over $x \in X$. The topology and bundle structure on $F(\cG)$ is determined by the group bundle $\cG$ as follows. Let $h_U: \cG_U \to U \times G$ be a trivialisation; if $x \in U$, it induces an isomorphism $h_x: \cG_x \to G$, and we have a bijection
\[
\mathrm{Iso}(G, \cG)_U = \coprod_{x \in U} \mathrm{Iso}(G, \cG_x) \rightarrow U \times \Aut(G), \quad (x, \phi_x) \mapsto (x, h_x \circ \phi_x),
\] through which $\mathrm{Iso}(G, \cG)_U$ acquires a topological structure. The topology on $F(\cG)$ is the final topology coinduced by the inclusion maps $\mathrm{Iso}(G, \cG)_U \hookrightarrow F(\cG)$. We also have a natural projection map $F(\cG) \to X, (x, \phi_x) \mapsto x$. Thus, $F(\cG)$ is a principal bundle over $X$ with structure group $\Aut(G)$.

Let us denote by $\cE_GX$ the  category of $G$-group bundles over $X$, and by $\cP_{\Aut(G)}X$ the category of $\Aut(G)$-principal bundles over $X$. We then have a functor
\[
\Phi: \cE_GX \to \cP_{\Aut(G)}X,
\] which sends a $G$-group bundle $\cG$ to $F(\cG)$, and sends a morphism $f: \cG \to \tilde{\cG}$ to $\Phi(f): F(\cG) \to F(\tilde{\cG}), (x, \phi_x) \mapsto (x, f \circ \phi_x)$. We also have a functor in the other direction,
\[
\Psi: \cP_{\Aut(G)}X \to \cE_GX,
\]
which takes an $\Aut(G)$-principal bundle $P$ to the associated bundle\footnote{~Say we have a $G$-principal bundle $\pi: P \to X$ and a left action of $G$ on some $F$. The \textit{associated bundle} to $P$, with fibre $F$, is given by $P \times_G F :=  (P \times F)/{\sim}$, where the equivalence relation $\sim$ is defined by $(pg, f) \sim (p, gf)$ for all $(p, f) \in P \times F$ and $g \in G$. The canonical projection is  $(p, f) \mapsto \pi(p)$, and each of the fibres carries the underlying structure of $F$.} $P \times_{\Aut(G)} G$ for the left action of $\Aut(G)$ on $G$, and maps an $\Aut(G)$-equivariant morphism $f: P \to \tilde{P}$ to $\Psi(f): P \times_{\Aut(G)}G \to \tilde{P} \times_{\Aut(G)}G, [p, g] \mapsto [f(p), g]$ (this is well defined because of the $\Aut(G)$-equivariance of $f$).

Observe that $F(\cG) \times_{\Aut(G)} G$ is isomorphic to $\cG$ in the category $\cE_GX$, through the fibre-preserving maps $F(\cG) \times_{\Aut(G)} G \to \cG, \left[(x, \phi_x), g\right] \mapsto \phi_x(g)$, and $\cG \to F(\cG) \times_{\Aut(G)} G, k \mapsto \left[(x, h_x^{-1}), h_x(k)\right],$ where $k \in \cG_x$ and $h_x$ is any isomorphism $\cG_x \to G$. Conversely, there is a natural $\Aut(G)$-equivariant morphism  $P \to \cF(P \times_{\Aut(G)}G)$ in the category $\cP_{\Aut(G)}X$, given by sending $p \in P_x$ to $(x,\varphi_x) \in \mathrm{Iso}(G, P_x \times_{\Aut(G)}G)$, where \[\varphi_x: G \to P_x \times_{\Aut(G)}G, \quad g \mapsto \left[p, g\right]. \] (It is easy to see that $\varphi_x$ is a well-defined group isomorphism.) As a morphism of $\Aut(G)$-principal bundles, it is in fact, by classical theory, an isomorphism of $\Aut(G)$-principal bundles. Naturality is a slightly cumbersome but straightforward verification, which we omit presenting here for conciseness.

These arguments establish Proposition \ref{prop:pre} -- $\cE_GX$ and $\cP_{\Aut(G)}X$ are equivalent categories. With this, we may proceed to state and prove the Murray correspondence between generalised and ordinary principal bundles.

Let $\cR_GX$ denote the category of generalised principal bundles over $X$, with varying structure group bundles that are $G$-group bundles. An object of $\cR_GX$ is a pair $(P, \cG)$ where $\cG$ is a $G$-group bundle and $P$ is a $\cG$-principal bundle. A morphism $(f, \chi)$ from $(P_1, \cG_1)$ to $(P_2, \cG_2)$ in this category consists of continuous\footnote{\, We are, as we have assumed from the outset, working in the topological setting -- analogous definitions hold in the smooth and holomorphic settings.} fibre-preserving maps $f: P_1 \to P_2$ and $\chi: \cG_1 \to \cG_2$ such that, if $x \in X, \, p \in P_x$ and $g \in (\cG_1)_x$, then $f(pg) = f(p)\chi(g)$.

\begin{theorem} \label{Murray correspondence}
The category $\cR_GX$ is equivalent to $\cP_{G \rtimes \Aut(G)}X$, the category of $(G \rtimes \Aut(G))$-principal bundles over $X$.
\end{theorem}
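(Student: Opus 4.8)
The plan is to build on Proposition~\ref{prop:pre}, which already identifies $G$-group bundles with $\Aut(G)$-principal bundles via the frame bundle functor $\Phi$ and its quasi-inverse $\Psi$. The key observation is that an object of $\cR_GX$ is a $G$-group bundle $\cG$ together with a $\cG$-principal bundle $P$, and that the data of the $\cG$-action on $P$ can be absorbed, via $\Phi$, into a single principal bundle for the larger group $G \rtimes \Aut(G)$. So I would first describe the functor $\cR_GX \to \cP_{G\rtimes\Aut(G)}X$ on objects. Given $(P,\cG)$, form the frame bundle $F(\cG)$, an $\Aut(G)$-principal bundle. I want to ``extend'' this to a $(G\rtimes\Aut(G))$-principal bundle whose ``$G$-part'' records $P$. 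Concretely, I would take the fibre product $F(\cG) \times_X P$ and quotient by the diagonal $\Aut(G)$-action, or equivalently consider the bundle whose fibre over $x$ consists of pairs $(\phi_x, \psi_x)$ where $\phi_x \in \Iso(G,\cG_x)$ and $\psi_x$ is a $\cG_x$-equivariant isomorphism from the trivial $\cG_x$-torsor to $P_x$ — i.e. a choice of point $\psi_x(e) \in P_x$ — with $G\rtimes\Aut(G)$ acting on such pairs in the evident way: $\Aut(G)$ by postcomposition on $\phi_x$ and compatibly on $\psi_x$, and $G$ by translating the chosen point $\psi_x(e)$ through $\phi_x$. Checking that this action is free and transitive on fibres (using local triviality of both $\cG$ and $P$, and condition~(2) in the definition of an $E$-principal bundle) gives a genuine $(G\rtimes\Aut(G))$-principal bundle, and a morphism $(f,\chi)$ in $\cR_GX$ induces a map of frames of $\cG$ via $\Phi(\chi)$ and a compatible map on the chosen points via $f$, functorially.

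For the functor in the other direction, $\cP_{G\rtimes\Aut(G)}X \to \cR_GX$: given a $(G\rtimes\Aut(G))$-principal bundle $Q$, let $N = G \subset G\rtimes\Aut(G)$ be the normal subgroup, and set $\cG := Q \times_{G\rtimes\Aut(G)} G$ for the conjugation action of $G\rtimes\Aut(G)$ on its normal subgroup $G$ (this is a $G$-group bundle — fibrewise a group isomorphic to $G$ — and it is exactly $\Psi$ applied to the $\Aut(G)$-principal bundle $Q/G$), and set $P := Q/\Aut(G)$, the quotient by the subgroup $\{e\}\rtimes\Aut(G)$. The residual $G$-action on $Q$ descends to give $P$ the structure of a $\cG$-principal bundle: one checks that the conjugation twisting is precisely what is needed for the $\cG$-action (rather than a bare $G$-action) to be well-defined on $P$. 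Local triviality is immediate since everything is pulled back from local trivialisations of $Q$.

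The remaining work is to exhibit natural isomorphisms between the two composites and the respective identity functors. Here I would reuse the isomorphisms already constructed in the proof of Proposition~\ref{prop:pre} for the $F \dashv (- \times_{\Aut(G)}G)$ pair — the map $F(\cG)\times_{\Aut(G)}G \xrightarrow{\sim} \cG$ and the canonical $Q/G \xrightarrow{\sim} F(\Psi(Q/G))$ — and supplement them with the evident identifications on the ``$P$-part'': on one side, the chosen point $\psi_x(e) \in P_x$ is recovered from the quotient construction, and on the other, $Q/\Aut(G)$ tautologically parametrises such choices. The main obstacle, I expect, is purely bookkeeping: getting the semidirect product conventions consistent throughout, so that the conjugation action defining $\cG$ from $Q$, the action defining $P$ as a quotient, and the original $\cG$-action on $P$ all fit together without a sign/inverse discrepancy — and verifying continuity (or, in the holomorphic setting, holomorphy) of the structure maps, which as in Proposition~\ref{prop:pre} reduces to a local computation using the explicit trivialisations. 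Once the conventions are pinned down, naturality in $(f,\chi)$ is a routine diagram chase, which I would state but not belabour, in keeping with the style of the preceding proposition.
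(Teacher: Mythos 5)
Your overall architecture coincides with the paper's: the forward functor is built from the fibre product of $P$ with the frame bundle $F(\cG)$, and the backward functor quotients $Q$ by the two evident subgroups of $G\rtimes\Aut(G)$, reusing Proposition~\ref{prop:pre} for the group bundle part. On the forward side there is a slip worth flagging: you propose to ``take the fibre product $F(\cG)\times_X P$ and quotient by the diagonal $\Aut(G)$-action,'' but the ``equivalent'' description you give immediately afterward -- pairs $(\phi_x,\psi_x)$ of a frame and a point of $P_x$ -- is precisely $F(\cG)\times_X P$ with no quotient at all, and that is the correct object: its fibre already has the cardinality of $G\rtimes\Aut(G)$, so a further quotient cannot be a $(G\rtimes\Aut(G))$-torsor. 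The paper simply takes $P\times_X F(\cG)$ with the right action $(p,\phi)(g,\xi)=(p\phi(g),\phi\circ\xi)$, matching your alternative description.

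The genuine gap is in the backward functor. You define $\cG := Q\times_{G\rtimes\Aut(G)}G$ using the \emph{conjugation} action of $G\rtimes\Aut(G)$ on its normal subgroup $G$, and assert this is ``exactly $\Psi$ applied to $Q/G$.'' That assertion is false. Unwinding $\Psi(Q/G)=(Q/G)\times_{\Aut(G)}G$ as an associated bundle of $Q$ gives $Q\times_{G\rtimes\Aut(G)}G$ for the action $(g,\varphi)\cdot h=\varphi(h)$ -- the projection $G\rtimes\Aut(G)\to\Aut(G)$ followed by the tautological action -- and this is what the paper uses. Conjugation, by contrast, acts by $(g,\varphi)\cdot h=g\varphi(h)g^{-1}$; the two differ by the inner automorphism $\mathrm{Inn}(g)$, and for nonabelian $G$ they yield non-isomorphic $G$-group bundles in general (a fibrewise equivariant group isomorphism between them would have to be constant on conjugacy classes, forcing its image into the centre of $G$). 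This is not cosmetic: with the conjugation convention, the natural formula $[r]\cdot[r,g]:=[r(g,\id)]$ for the $\cG$-action on $P=Q/\Aut(G)$ works out in local coordinates to $([r(h,\id)],[r,g])\mapsto[r(gh,\id)]$, which is a \emph{left} action of each fibre group, so $P$ fails to be a $\cG$-principal bundle as defined. You anticipate a possible ``sign/inverse discrepancy'' in your closing paragraph, but the fix is not convention bookkeeping within your setup -- the action defining $\cG$ needs to be the projection-to-$\Aut(G)$ action, not conjugation, as in the paper.
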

The operation on $G \rtimes \Aut(G)$ is the canonical semi-direct product, $(g_1, \phi_1)(g_2, \phi_2) = (g_1\phi_1(g_2), \phi_1 \circ \phi_2)$. Before giving a careful proof of Theorem \ref{Murray correspondence}, we make a few remarks on the natural reasoning underlying the correspondence. First, consider an object $(P, \cG)$ of $\cR_GX$. We know that $\cG$ determines an $\Aut(G)$-principal bundle, namely the frame bundle $F(\cG)$; so, the fibre bundle $P \times_X F(\cG)$ carries the information of $(P, \cG)$ but with one significant gap -- we cannot reconstruct from it the action of $\cG$ on $P$. However, we can, informally speaking, restore this information (the $\cG$-action on $P$) to the fibre bundle $P \times_X F(\cG)$ by upgrading it to a $(G \rtimes \Aut(G))$-principal bundle -- specifically, we define a right action of $G \rtimes \Aut(G)$ on $P \times_X F(\cG)$ by $(p, \phi)(g, \xi) = (p\phi(g), \phi \circ \xi )$, where $(p, \phi) \in P \times_X F(\cG)$ and $(g, \xi) \in G \rtimes \Aut(G)$. This makes $P \times_X F(\cG)$ into a $(G \rtimes \Aut(G))$-principal bundle, an `unrolling' of $P$ into an ordinary principal bundle. Second, consider a $(G \rtimes \Aut(G))$-principal bundle $R$ -- an arbitrary object of $\cP_{G \rtimes \Aut(G)}X$. Such an object encompasses precisely the data needed to construct a generalised principal bundle $P$ with structure group bundle $\cG$, a $G$-group bundle; one way to intuitively see this is as follows. We know that the fibres of $R$ are $(G \rtimes \Aut(G))$-torsors; a natural candidate for $P$, then, is the induced fibre bundle ${R}/{\Aut(G)}$ (that is, the space $R$ modulo the equivalence relation $\sim$, where $x \sim y$ if and only if there exists $(e, \chi) \in G \rtimes \Aut(G)$ such that $x(e, \chi) = y$). The reason that this is the first and only obvious choice for $P$ is that each of its fibres ${R_x}/{\Aut(G)}$, where $x \in X$, is a $G$-torsor, since the space ${(G \rtimes \Aut(G))}/{\Aut(G)}$ of right $\Aut(G)$-cosets of $G \rtimes \Aut(G)$ is a $G$-torsor. Indeed, we have a well-defined action,
\[G \times \left({(G \rtimes \Aut(G))}/{\Aut(G)}\right) \to {(G \rtimes \Aut(G))}/{\Aut(G)}, \quad (g, [h, \varphi]) \mapsto [(g, \id) \cdot (h, \varphi)],\] which is easily verified to be free and transitive. 

In general, there is no well-defined (let alone fibrewise free and transitive) action of $G$ on the fibre bundle ${R}/{\Aut(G)}$; certainly, the action of $G$ on $R$ does not pass to ${R}/{\Aut(G)}$, for the action of $G$ on any fibre of $R$ (that is, of $G$ on $R_x$) does interact with $(e , \varphi)$ if $r(e, \varphi) \in R_x$. However, $R$ similarly induces another fibre bundle ${R}/{G}$, which is an $\Aut(G)$-principal bundle (each fibre is homeomorphic to ${(G \rtimes \Aut(G))}/{\Aut(G)} \simeq \Aut(G)$). When ${R}/{G}$ is repackaged into the $G$-group bundle $\left({R}/{G}\right) \times_{\Aut(G)}G \simeq R \times_{G \rtimes \Aut(G)}G$ as per Proposition \ref{prop:pre}, it acts on $P$ through the well-defined, fibrewise free and transitive action given by
\[ [r]_{\Aut(G)}[r, g]_{G \rtimes \Aut(G)} = [r(g, \id_G)]_{\Aut(G)},
\]
where $r \in R$ and $g \in G$. Thus we set $\cG = R \times_{G \rtimes \Aut(G)}G$. 
(Note that we have included the subscripts $\Aut(G)$ and $G \rtimes \Aut(G)$ for clarity here, but if the context makes the equivalence class clear, we will omit the subscripts.)

We now prove Theorem \ref{Murray correspondence} in detail. To an object $(P, \cG)$ of $\cR_GX$ we associate the object $P \times_X F(\cG)$ of $\cP_{G \rtimes \Aut(G)}X$. The group action of $G \rtimes \Aut(G)$ on $P \times_X F(\cG)$ is as above; that this action is free and transitive follows easily from the fact that the action of $\cG$ on $P$ is fibrewise free and transitive. Let $(f, \chi)$ be a morphism in the category $\cR_GX$ from $(P_1, \cG_1)$ to $(P_2, \cG_2)$. As we know, $\chi: \cG_1 \to \cG_2$ determines a morphism $\Phi(\chi): F(\cG_1) \to F(\cG_2)$ between the associated frame bundles. We then associate to $(f, \chi)$ the morphism $(f, \Phi(\chi)): P_1 \times_X F(\cG_1) \to P_2 \times_X F(\cG_2)$ of $(G \rtimes \Aut(G))$-principal bundles -- expressly given by $(p, \psi) \mapsto (f(p), \chi \circ \psi)$. It is easy to check that this morphism is $(G \rtimes \Aut(G))$-equivariant: {\begin{align*}(p, \psi)(g, \xi) &= (p\psi(g), \psi \circ \xi) \\ &\mapsto (f(p)\,\chi \circ \psi (g), \chi \circ \psi \circ \xi) \\ &= (f(p), \chi \circ \psi)(g, \xi). \end{align*}} This defines a functor from $\Gamma$ from $\cR_GX$ to $\cP_{G \rtimes \Aut(G)}X$.

Consider, conversely, an object $R \to X$ of $\cP_{G \rtimes \Aut(G)}X$. The natural homomorphism $G \rtimes \Aut(G) \to \Aut(G)$ induces the bundle $R \times_{G \rtimes \Aut(G)}\Aut(G)$, where the left action of $G \rtimes \Aut(G)$ on $\Aut(G)$ is given by $(g, \alpha)\varphi = \alpha \circ \varphi$. This is an $\Aut(G)$-principal bundle through the free and transitive right action of $\Aut(G)$ defined by $[r, \varphi]\psi = [r, \varphi \circ \psi]$; by Proposition \ref{prop:pre}, it in turn induces the $G$-group bundle $R \times_{G \rtimes \Aut(G)}G$, where the left action of $G \rtimes \Aut(G)$ on $G$ is given by $(g, \varphi)h = \varphi(h)$. (We may notice that the $G$-group bundle we get from Proposition \ref{prop:pre} is $(R \times_{G \rtimes \Aut(G)}\Aut(G)) \times_{\Aut(G)}G$, but this rather cumbersome bundle is naturally isomorphic to $R \times_{G \rtimes \Aut(G)}G$ -- we have a well-defined isomorphism given by $(R \times_{G \rtimes \Aut(G)}\Aut(G)) \times_{\Aut(G)}G \to R \times_{G \rtimes \Aut(G)}G, [r, \psi, h] \mapsto [r, \psi(h)]$, with inverse $R \times_{G \rtimes \Aut(G)}G \to (R \times_{G \rtimes \Aut(G)}\Aut(G)) \times_{\Aut(G)}G, [r, h] \mapsto [r, \id_G, h]$.) Let us denote this $G$-group bundle by $\cG$. Now, the $(G \rtimes \Aut(G))$-principal bundle $R$ induces the $\cG$-principal bundle $P := \faktor{R}{\Aut(G)}$. We define a right action of $\cG$ on $P$ by
\[ [r]_{\Aut(G)}[r, g]_{G \rtimes \Aut(G)} = [r(g, \id_G)]_{\Aut(G)}.
\] Firstly, this action is well defined: given $x \in X$ and $r \in R_x$, any element of the fibre $\cG_x$ has a representative of the form $(r, g)$, for a unique $g \in G$. Secondly, to see that the action is transitive, let $[r]_{\Aut(G)}, [r']_{\Aut(G)} \in P_x$ for some $x \in X$ and $r, r' \in R_x$; since the action of $G \rtimes \Aut(G)$ on $R_x$ is transitive, there exists $(g, \varphi) \in G \rtimes \Aut(G)$ such that $r(g, \varphi) = r'$ in $R_x$. This implies that {\begin{align*}[r']_{\Aut(G)} = [r(g, \varphi)]_{\Aut(G)} = [r(g, \id_G)]_{\Aut(G)} = [r]_{\Aut(G)}[r, g]_{G \rtimes \Aut(G)}.\end{align*}} Finally, observe that the action is free: if $[r(g, \id_G)]_{\Aut(G)} = [r]_{\Aut(G)}$, then there exists some $(g, \varphi) \in G \rtimes \Aut(G)$ such that $r(g, \varphi) = r$ in $R_x$, which in turn implies that $(g, \varphi) = (e, \id_G)$ -- in particular, $g = e$. This shows that $(P, \cG)$ is indeed an object belonging to the category $\cR_GX$.

We now turn our attention to morphisms of $P_{G \rtimes \Aut(G)}X$. Let $\lambda: R_1 \to R_2$ be a morphism of $(G \rtimes \Aut(G))$-principal bundles. Per the above construction, $R_1$ and $R_2$ give rise to objects $(P_1, \cG_1)$ and $(P_2, \cG_2)$, respectively, of $\cR_GX$. We know that $\lambda$ is $(G \rtimes \Aut(G))$-equivariant, and when restricted to any fibre, is an isomorphism of $G \rtimes \Aut(G)$-torsors -- in particular, an injection. As a result, we obtain well-defined fibre-preserving maps $f: P_1 \to P_2, [r]_{\Aut(G)} \mapsto [\lambda(r)]_{\Aut(G)}$ and $\chi: \cG_1 \to \cG_2, \, [r, g]_{G \rtimes \Aut(G)} \mapsto [\lambda(r), g]_{G \rtimes \Aut(G)}$. We verify easily that $(f, \chi)$ is a genuine morphism of $\cR_GX$: {\begin{align*} f([r]_{\Aut(G)}[r,h]_{G \rtimes \Aut(G)}) &= f([r(h, \id_G)]_{\Aut(G)}) \\ &= [\lambda(r(h, \id_G))]_{\Aut(G)} \\ &= [\lambda(r)(h, \id_G)]_{\Aut(G)} \\ &= [\lambda(r)]_{\Aut(G)}[\lambda(r),h]_{G \rtimes \Aut(G)} \\ &= f([r]_{\Aut(G)})\chi([r, h]_{G \rtimes \Aut(G)}). \end{align*}}
We now have a functor from $\cP_{G \rtimes \Aut(G)}X$ to $\cR_GX$; we denote it by $\Lambda$. 

In order to see that $\cR_GX$ and $\cP_{G \rtimes \Aut(G)}X$ are equivalent categories, it remains to show that {\begin{enumerate} 
\item[(1)] for any $(P, \cG)$ in $\cR_GX$, $\Lambda \circ \Gamma (P, \cG)$ is naturally isomorphic to $(P, \cG)$, and 
\item[(2)] for any $R \in \cP_{G \rtimes \Aut(G)}X$, $\Gamma \circ \Lambda (R)$ is naturally isomorphic to $R$. \end{enumerate}}
\noindent Take an object $(P, \cG)$ of $\cR_GX$; then, $$\Lambda \circ \Gamma (P, \cG) = \left({\left(P \times_X F(\cG) \right)}/{\Aut(G)},  \left(P \times_X F(\cG) \right) \times_{G \rtimes \Aut(G)}G \right).$$ For (1), we show that there is an isomorphism $\xi: \cG \to \left(P \times_X F(\cG) \right) \times_{G \rtimes \Aut(G)}G$ of $G$-group bundles and a morphism $f: P \to {\left(P \times_X F(\cG) \right)}/{\Aut(G)}$ that is $\xi$-equivariant (in the sense that if $x \in X$, then $f(pg) = f(p)\xi(g)$ for all $p \in P_x$ and $g \in \cG_x$)\footnote{~Analogously to the case of ordinary principal bundles, if we have a morphism of generalised principal bundles over a common base, together with an isomorphism between their structure group bundles, the generalised principal bundles are in fact isomorphic. The proof is also entirely analogous. Take $(P_1, \cG_1)$ and $(P_2, \cG_2)$ over the base $X$; say $\phi: \cG_1 \to \cG_2$ is an isomorphism and $f: P_1 \to P_2$ is a $\phi$-equivariant morphism. When restricted to a fibre over $x \in X$, $f_x$ is a $\phi_x$-equivariant isomorphism of torsors. The inverse $f^{-1}: P_2 \to P_1$ of $f$ is then defined by $p \mapsto f_x^{-1}(p)$, given any $p$ in the fibre of $P_2$ over $x$. This map is $\phi^{-1}$-equivariant because $f_x^{-1}$ is $\phi_x^{-1}$-equivariant for each $x \in X$.}. Here is how we define $\xi: \cG \to  (P \times_X F(\cG)) \times_{G \rtimes \Aut(G)}G$: assume $\tilde{g} \in \cG_x$ for some $x \in X$; then, choose any isomorphism $\alpha: \cG_x \to G$, $q \in P$ and let $\tilde{g} \mapsto [q, \alpha^{-1}, \alpha(\tilde{g})]$. This is easily seen to be a well-defined map, as follows. Say $\tilde{g} \mapsto [p, \varphi^{-1}, \varphi(\tilde{g})]$. We know that there exists $(h, \psi) \in G \rtimes \Aut(G)$ such that $(q, \alpha^{-1})(h, \psi) = (q\psi(h), \alpha^{-1} \circ \psi) = (p, \varphi)$. Thus, in $(P \times_X F(\cG)) \times_{G \rtimes \Aut(G)}G$, $$[q, \alpha^{-1}, \alpha(\tilde{g})] = [q \psi(h), \alpha^{-1} \circ \psi, \psi^{-1} \circ \alpha(\tilde{g})] = [p, \varphi^{-1}, \varphi(\tilde{g})].$$ The inverse of $\xi$ is given by $\xi^{-1}: (P \times_X F(\cG)) \times_{G \rtimes \Aut(G)}G \to \cG, [p, \phi, g] \mapsto \phi(g)$. Now, we define the morphism $f: P \to {\left(P \times_X F(\cG) \right)}/{\Aut(G)}$ by $p \mapsto [p, \alpha]$ for $p \in P_x$, where $\alpha: G \to \cG_x$ is any Lie group isomorphism. This morphism, firstly, is well defined: if $\beta: G \to \cG_x$ is another isomorphism, then $[p, \alpha] = [(p, \alpha)(e, \alpha^{-1}\circ \beta)] = [p, \beta]$. Secondly, it is $\xi$-equivariant; for, if $p \in P_x$ and $g \in \cG_x$, then 
\begin{align*} pg \mapsto [pg, \alpha]_{\Aut(G)}  &= [(p, \alpha)(g, \id_G)]_{\Aut(G)} \\ &= [p, \alpha]_{\Aut(G)}[p, \alpha, g]_{G \rtimes \Aut(G)}\\ &= [p, \alpha]_{\Aut(G)} \xi(g).
\end{align*}
We now address (2). We start with a $(G \rtimes \Aut(G))$-principal bundle $R \to X$. By definition, $\Gamma \circ \Lambda (R) = (R/\Aut(G)) \times_X \cF(R \times_{G \rtimes \Aut(G)}G)$. To show that this $(G \rtimes \Aut(G))$-principal bundle is isomorphic to $R$, we only need to show that there is a $(G \rtimes \Aut(G))$-equivariant morphism from one to the other. Define $R \to (R/\Aut(G)) \times_X \cF(R \times_{G \rtimes \Aut(G)}G)$ by $r \mapsto ([r], \varphi)$, where, assuming $r \in R_x$ for some $x \in X$, $\varphi: G \to R_x \times _{G \rtimes \Aut(G)} G, g \mapsto [r, g]$. Finally, we see easily that this morphism of fibre bundles is $(G \rtimes \Aut(G))$-invariant. Indeed, say $(g, \psi) \in G \rtimes \Aut(G)$ and $r \in R_x$, and let $\varphi$ be as above; then observe that
\begin{align*}
r(g, \psi) &\mapsto ([r(g, \psi)], \varphi \circ \psi) \\
&= ([r(g, \id_G)], \varphi \circ \psi) \\
&= ([r]_{\Aut(G)}[r, g]_{G\rtimes \Aut(G)}, \varphi \circ \psi) \\
&= ([r]\varphi(g), \varphi \circ \psi) \\
&= ([r, \varphi])(g, \psi).
\end{align*}
Finally, verifying that these isomorphisms are natural is routine, and we conclude the proof of Theorem \ref{Murray correspondence} here. 
\\

The equivalence of the categories $\cR_GX$ and $\cP_GX$ allows us to carry a number of classical results from ordinary principal bundle theory over to the setting of generalised principal bundles. Of particular importance to us is the following deduction. Say we have $G$-group bundles $\cG_0$ and $\cG_1$ over a Hausdorff, second countable space $X$. Assume we are also given 1-cocycles $f = (f_{ij}: U_{ij} \to \cG_0)_{i, j \in I}$ and $g  = (g_{ij}: U_{ij} \to \cG_1)_{i, j \in I}$ of continuous sections of $\cG_0$ and $\cG_1$ respectively, with respect to an open cover $\cU = (U_i)_{i \in I}$  of $X$. Let $P_f \to X$ be the $\cG_0$-principal bundle defined by the cocycle $f$, and let $P_g \to X$ be the $\cG_1$-principal bundle defined by $g$. Assuming, as we may, that $\cU$ is an adequately fine open cover, $P_f$ (respectively, $P_g$) is specified by a pair of 1-cocycles $(f^{\alpha}, f^{\beta})$ (respectively, $(g^{\alpha}, g^{\beta})$) with respect to $\cU$, where $f^\alpha$ and $g^\alpha$ take values in $\Aut(G)$ and define the group bundles $\cG_0$ and $\cG_1$ respectively, and $f^\beta$ and $g^\beta$ take values in $G$ and define $P_f$ and $P_g$ respectively. (We see this expressly, without the need to invoke Theorem \ref{Murray correspondence}. Indeed, $f^\alpha$ and $g^\alpha$ are the transition functions of the group bundles $\cG_0$ and $\cG_1$ respectively; similarly, $f^\beta$ and $g^\beta$ specify the transition functions of the fibre bundles $P_f$ and $P_g$ respectively -- the fibres of $P_f$ and $P_g$ are right $G$-torsors, so any automorphism of such a fibre, upon trivialisation, is given by left multiplication by an element of $G$.) Now, assume that $f^\alpha$ is homotopic to $g^\alpha$ through such 1-cocycles with respect to $\cU$, and that $f^\beta$ is similarly homotopic to $g^\beta$. (A special case of this situation is when $\cG_0 = \cG_1$, and $f$ is homotopic to $g$ through 1-cocycles of continuous sections of $\cG_0$ with respect to $\cU$.) This is equivalent to the assumption that there exists a $G$-group bundle $\cG$ over the base $X \times I$, and a $\cG$-principal bundle $P \to X \times I$, such that the \textit{0-endpoint pair} over $X$ (that is, the object of $\cR_GX$ induced by restricting the base to $X \times \{0\}$) is isomorphic to $(P_f, \cG_0)$, and the \textit{1-endpoint pair} is isomorphic to $(P_g, \cG_1)$. By Theorem \ref{Murray correspondence}, the categories $\cR_G(X \times I)$ and $\cP_{G \rtimes \Aut(G)}(X \times I)$ are equivalent, so $(P, \cG)$ corresponds to a $(G \rtimes \Aut(G))$-principal bundle $R$ over $X \times I$. The $(G \rtimes \Aut(G))$-principal bundles $R_f$ and $R_g$ -- which are induced by restricting the base to $X\times \{0\}$ and $X \times \{1\}$ respectively -- evidently correspond, under the equivalence of the categories $\cR_GX$ and $\cP_{G \rtimes \Aut(G)}X$, to $(P_f, \cG_0)$ and $(P_g, \cG_1)$ respectively. But, from classical results (see, for example, \cite{husemoller1994fibre}), we know that the ordinary principal bundles $R_f$ and $R_g$ are isomorphic, which in turn means that $(P_f, \cG_0)$ and $(P_g, \cG_1)$ are isomorphic in the $\cR_GX$. The following corollary summarises this:

\begin{corollary} \label{cor: htpy invariance}
Let $r_0: X \times I \to X \times I$ be the map given by $(x, t) \mapsto (x, 0)$, and let $r_1: X \times I \to X \times I$ be given by $(x, t) \mapsto (x, 1)$. If $(P, \cG)$ is an object of $\cR_G(X \times I)$, then $(r_0^*(P), r_0^*(\cG))$ and $(r_1^*(P), r_1^*(\cG))$ are isomorphic objects of $\cR_G(X)$.
\end{corollary}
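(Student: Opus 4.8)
The plan is to deduce the statement from the classical homotopy invariance of ordinary principal bundles by transporting the problem across the Murray correspondence (Theorem \ref{Murray correspondence}). Write $\Gamma$ and $\Lambda$ for the mutually quasi-inverse functors constructed above, now regarded over the base $X \times I$. Given an object $(P, \cG)$ of $\cR_G(X \times I)$, set $R = \Gamma(P, \cG)$, a $(G \rtimes \Aut(G))$-principal bundle over $X \times I$.

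The first step is to record that $\Gamma$ and $\Lambda$ are natural with respect to base change: for any continuous map $h \colon Y \to X \times I$ there are canonical natural isomorphisms $\Gamma(h^*(-)) \cong h^*(\Gamma(-))$ and $\Lambda(h^*(-)) \cong h^*(\Lambda(-))$. This holds because $\Gamma$ is assembled from the frame-bundle construction $\cG \mapsto F(\cG)$ and the fibre product $P \times_X F(\cG)$, and $\Lambda$ from the quotients $R/\Aut(G)$, $R/G$ and the associated-bundle construction, every one of which commutes with pullback up to a canonical isomorphism; it then remains only to check that these isomorphisms are compatible with the relevant structure maps and group actions, which is a routine diagram chase. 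Applying this with $h$ the two endpoint inclusions $\iota_0, \iota_1 \colon X \hookrightarrow X \times I$, and identifying $r_i^* P$ over $X \times I$ with $\iota_i^* P$ over $X$ in the usual way, we obtain that the restrictions $R|_{X \times \{0\}}$ and $R|_{X \times \{1\}}$ correspond, under the equivalence over $X$, precisely to the endpoint pairs $(r_0^* P, r_0^* \cG)$ and $(r_1^* P, r_1^* \cG)$ in $\cR_G(X)$.

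Next, invoke the classical theorem (see, e.g., \cite{husemoller1994fibre}) that over a paracompact base the restrictions of a principal bundle on $X \times I$ to the two ends are isomorphic; here $X$ is paracompact, being Hausdorff and second countable (equivalently, metrizable) as in the standing hypotheses. Hence $R|_{X \times \{0\}} \cong R|_{X \times \{1\}}$ in $\cP_{G \rtimes \Aut(G)}(X)$, and applying $\Lambda$ yields the desired isomorphism $(r_0^* P, r_0^* \cG) \cong (r_1^* P, r_1^* \cG)$ in $\cR_G(X)$.

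The main obstacle is the base-change naturality of the Murray equivalence used in the first step: while each constituent operation visibly commutes with pullback, assembling these compatibilities into a single coherent natural isomorphism and verifying it respects all the structure maps and actions is the one genuinely fiddly verification. It can, however, be bypassed by the more hands-on route already indicated in the discussion preceding the statement: apply $\Gamma$ once, over $X \times I$, to produce $R$; its endpoint restrictions are, by direct inspection of the construction of $\Gamma$, the images under $\Gamma$ of the endpoint pairs of $(P,\cG)$; the classical theorem is invoked a single time; and $\Lambda$ is applied once to return to $\cR_G(X)$. In this form one uses only the \emph{existence} of the equivalence functors, not their naturality.
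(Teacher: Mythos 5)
Your proposal is correct and follows essentially the same route as the paper: transport $(P,\cG)$ across the Murray equivalence to an ordinary $(G\rtimes\Aut(G))$-principal bundle over $X\times I$, invoke classical homotopy invariance, and transport back. The one place where you add value is in flagging and addressing the base-change compatibility of $\Gamma$ and $\Lambda$ (which the paper dismisses as "evidently"), and your observation that this can be bypassed by inspecting the endpoint restrictions of $\Gamma(P,\cG)$ directly is a clean way to avoid the fiddly naturality check.
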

Here is a special case of the above result:
\begin{corollary} \label{cor: homotopic implies iso}
Let $\cG$ be a $G$-group bundle and let $f$ and $g$ be continuous 1-cocycles of the sheaf of sections of $\cG$, which are homotopic, through such 1-cocycles, over a common refinement of the open cover of $X$ on which they are defined. Then, the $\cG$-principal bundles defined by $f$ and $g$ are topologically isomorphic.
\end{corollary}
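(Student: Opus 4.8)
The plan is to deduce this directly from Corollary \ref{cor: htpy invariance}: I will repackage the given homotopy of $1$-cocycles as a single generalised principal bundle over the cylinder $X \times I$, whose two endpoint pairs are (isomorphic to) the $\cG$-principal bundles defined by $f$ and $g$, and then invoke homotopy invariance.

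First I would dispose of the cover bookkeeping. Suppose $f = (f_{ij})$ is a $1$-cocycle of sections of $\cG$ over a cover $\cU_f$, $g = (g_{ij})$ one over $\cU_g$, and the homotopy is a family of $1$-cocycles $h = (h_{ij} \colon U_{ij} \times I \to \cG)$ over a common refinement $\cU = (U_i)_{i \in I}$, restricting at $t = 0$ and $t = 1$ to (the restrictions of) $f$ and $g$. Since passing to a refinement of a cover does not alter the isomorphism class of the bundle a cocycle defines, I may work over $\cU$ throughout, refining further if needed so that $\cU$ is fine enough for the preceding discussion to apply. Now let $\pi \colon X \times I \to X$ be the first projection and $\pi^*\cG$ the pulled-back $G$-group bundle; the $h_{ij}$ are continuous sections of $\pi^*\cG$ over the members $U_{ij} \times I$ of the cover $(U_i \times I)$ of $X \times I$ and satisfy the $\cG$-valued cocycle condition, so the standard clutching construction --- performed chart by chart using a local trivialisation of $\cG$ over each $U_i$ --- yields a $\pi^*\cG$-principal bundle $P \to X \times I$, that is, an object $(P, \pi^*\cG)$ of $\cR_G(X \times I)$. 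Restricting the base to $X \times \{0\}$ the transition cocycle becomes $f$ and $\pi^*\cG$ becomes $\cG$ canonically, so the $0$-endpoint pair is isomorphic to $(P_f, \cG)$; similarly the $1$-endpoint pair is isomorphic to $(P_g, \cG)$. Corollary \ref{cor: htpy invariance} then gives an isomorphism $(P_f, \cG) \cong (P_g, \cG)$ in $\cR_G(X)$, which in particular is a fibre-preserving homeomorphism $P_f \to P_g$ --- the asserted topological isomorphism.

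The step carrying actual content, rather than bookkeeping, is the clutching construction: one must check that transition data valued in the group \emph{bundle} $\cG$, rather than a fixed group, still glues trivial local pieces into a genuine $\cG$-principal bundle; this is routine after trivialising $\cG$ over each $U_i$, and is implicitly what the discussion preceding Corollary \ref{cor: htpy invariance} already uses. The one remark worth making is that an isomorphism in $\cR_G(X)$ between $(P_f, \cG)$ and $(P_g, \cG)$ is a pair $(F, \chi)$ with $\chi$ some automorphism of $\cG$; but because the homotopy keeps the group-bundle data equal to $\cG$ at every time $t$, the $\Aut(G)$-component of the corresponding homotopy of $(G \rtimes \Aut(G))$-cocycles (in the notation of the Murray correspondence) is constant, so one may take $\chi = \id_\cG$ and obtain an isomorphism of $\cG$-principal bundles in the strict sense. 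I anticipate no real obstacle in carrying this out.
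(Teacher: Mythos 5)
Your main line of argument is exactly the paper's: the paper presents Corollary \ref{cor: homotopic implies iso} with the single phrase ``Here is a special case of the above result,'' meaning precisely the specialisation $\cG_0 = \cG_1 = \cG$ in the discussion preceding Corollary \ref{cor: htpy invariance}, and your clutching step over $X \times I$ and invocation of Corollary \ref{cor: htpy invariance} is a correct unpacking of that. Your refinement bookkeeping is also fine.

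The one thing I would push back on is the closing remark about arranging $\chi = \id_\cG$. Having the $\Aut(G)$-component of the $(G \rtimes \Aut(G))$-cocycle over $X \times I$ be $t$-independent does not, by itself, let you conclude that the classical homotopy-invariance isomorphism $R\vert_{X\times\{0\}} \to R\vert_{X\times\{1\}}$ descends to the identity on the $\Aut(G)$-quotient: the classical theorem produces \emph{some} $(G \rtimes \Aut(G))$-equivariant isomorphism, with no control over its image under the forgetful functor to $\Aut(G)$-bundles. To genuinely force $\chi = \id_\cG$ you would need a relative or fibrewise version of homotopy invariance (equivalently, the homotopy invariance of $H^1$ with coefficients in the fixed sheaf of sections of $\cG$), which is true but is a separate statement requiring its own (short) proof, not a formal consequence of the $(G\rtimes\Aut(G))$-bundle statement you cite. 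Since the corollary as stated and used in the paper only needs an isomorphism in $\cR_G X$ (allowing a nontrivial $\chi$), this extra claim is both unjustified as written and unnecessary; I'd simply drop it.
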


\end{definition}

\subsection{The parametric Oka-Grauert principle for generalised principal bundles} 

The central foundation for our main result is the parametric Oka-Grauert principle for generalised principal bundles (Theorem \ref{parametric Cartan} below). This theorem was close to the surface of the classical works of Cartan and Grauert \cite{cartan1958espaces,grauert1958}, though not explicitly formulated during that period. We note that the theorem is subsumed by more general results in modern, Gromov-style Oka theory, proven using different methods (see \cite[Theorem 5.4.4]{forstnerivc2011stein}). The proof we provide below uses only the methods of Cartan and Grauert, offering a more direct approach to this important special case.

We first set up the technical foundation. 
Assume $X$ is a Stein space, and let $E$ be a holomorphic Lie group bundle with base $X$. Following Cartan \cite{cartan1958espaces}, we denote by $\cE^a$ the sheaf of holomorphic sections of $E$, and $\cE^c$ the sheaf of continuous sections of $E$.

Let $C$ be a compact Hausdorff space, and let $N \subset H \subset C$ be closed subspaces. By an $(N, H, C)$-map in $\cE^c(U)$, we mean a continuous family $\varphi_t$ of sections of $\cE^c$ over $U$, parametrised by $t \in C$, such that 
\begin{enumerate}
\item for each $t \in N$, $\varphi_t$ is the identity section (that is, assumes the value of the identity element in each fibre), and
\item for each $t \in H$, $\varphi_t \in \cE^a(U)$.
\end{enumerate}
We may write an $(N, H, C)$-map in the form $C \times U \to E$. Denote by $\cF$ the sheaf of the $(N, H, C)$-maps in $\cE^c$; that is, for each open set $U \subset X$, $\cF(U)$ is a topological group, equipped with the compact-open topology, comprising the $(N, H, C)$-maps in $\cE^c(U)$. It is straightforward to verify that $\cF(U)$ is a topological group for any open set $U \in \cU$ and that $\cF$ is a sheaf.
\begin{theorem}\textnormal{\cite{cartan1958espaces}} \label{theorem: main theorem} 
  Let $C$ be a compact Hausdorff space, and let $N \subset H \subset C$ be closed subspaces. Assume that $N$ is a weak deformation retract of $C$ -- that is, there exists a continuous map $r : C \times I \to C$ satisfying $r(x,0) = x$ and $r(x,1) \in N$ for all $x \in C$, with $r(x,1) = x$ when $x \in N$. Then, 
\begin{enumerate}
\item[\textnormal{(1)}] the topological group $H^0(X, \cF)$ is path connected,
\item[\textnormal{(2)}] if $U \subset X$ is an open, holomorphically convex subset, then the image of the map $H^0(X, \cF) \to H^0(U, \cF)$ is dense in $H^0(U, \cF)$, and
\item[\textnormal{(3)}] $H^1(X, \cF) = 0$.
\end{enumerate}
\end{theorem}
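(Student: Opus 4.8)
The plan is to run the classical Cartan--Grauert machine, carrying the parameter space $C$ along throughout together with the constraints imposed over $N$ and $H$, and to prove assertions (1)--(3) simultaneously by induction over an exhaustion. First I would fix a normal exhaustion of the Stein space by open, holomorphically convex, relatively compact subsets $X_1 \Subset X_2 \Subset \cdots$ with $\bigcup_k X_k = X$, arranged so that the passage from $X_k$ to $X_{k+1}$ is effected by finitely many elementary enlargements across ``bricks'' (analytic Cartan pairs) on neighbourhoods of which the bundle $E$ is holomorphically trivial, $E \cong U \times G$ with $G$ a complex Lie group. The local model is then the topological group $\cF(U)$ of continuous maps $C \times U \to G$ that are holomorphic in the $U$-variable over $H \times U$ and equal to the identity over $N \times U$, and everything reduces to analysis in this model, exactly as in Cartan's and Grauert's treatment of the non-parametric case.

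The crux of the argument, and the step I expect to be the main obstacle, is a nonabelian parametric version of Cartan's splitting (``Cousin'') lemma: given a Cartan pair $K = K' \cup K''$ of holomorphically convex compacta inside such a chart, every section $\gamma$ of $\cF$ over a neighbourhood of $K' \cap K''$ that is $C^0$-close to the identity section splits as $\gamma = \gamma' \cdot (\gamma'')^{-1}$ with $\gamma'$, $\gamma''$ sections of $\cF$ near $K'$, $K''$, again close to the identity. This has a linear and a nonlinear layer. The linear layer is the additive Cousin problem for the sheaf of $(N,H,C)$-maps into the Lie algebra bundle $\mathrm{Lie}(E)$: on the holomorphic part one obtains a bounded linear splitting from Cartan's Theorem~B applied to the coherent sheaf $\cO(\mathrm{Lie}(E))$ over a Stein neighbourhood of $K$ (the $C$-dependence being handled by working with Banach/Fréchet-valued coefficients, or by applying Theorem~B fibrewise with uniform estimates), while on the purely continuous part one splits by a partition of unity subordinate to $\{K', K''\}$; in both cases the splitting is performed so as to vanish over $N$ and stay holomorphic over $H$. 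The nonlinear layer then passes from this additive splitting to the multiplicative one by a Newton-type iteration inside the Fréchet group $\cF$ over a neighbourhood of $K$, precisely as in Cartan's proof for $\GL_n$, using local exponential coordinates on $G$; convergence of the iteration is what forces $\gamma$ to be taken close to the identity, which is harmless for the induction since, after restricting to a slightly smaller brick, any cocycle can be made as $C^0$-small as desired by invoking the approximation statement.

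Alongside the splitting lemma one needs the local approximation (Runge) lemma: if $U' \Subset U$ with $U$ holomorphically convex, then sections of $\cF$ over $U$ can be approximated uniformly on $U'$ by sections over larger holomorphically convex sets, and ultimately by global sections --- again via Theorem~B for $\cO(\mathrm{Lie}(E))$, parametrically, using the exponential to pass from additive to multiplicative approximation. With the splitting and approximation lemmas in hand, assertions (2) and (3) become the standard globalisation: a $1$-cocycle of $\cF$ on $X$ is refined to a cover compatible with the exhaustion, written over each $X_k$ as an ordered product of cocycles each supported on a single brick, and trivialised brick by brick by the splitting lemma while the approximation lemma keeps the running product convergent in $H^0(X_k,\cF)$; passing to the limit over $k$ yields a global $0$-cochain trivialising the cocycle, hence $H^1(X,\cF) = 0$, and the same scheme run on $0$-cochains gives the density statement (2).

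Finally, assertion (1): the weak deformation retract hypothesis on $N \subset C$ enters precisely here (and in the base case of the induction). Over a Stein, holomorphically convex chart on which $E$ is trivial, the model space of $(N,H,C)$-maps is path-connected --- essentially because $N \hookrightarrow C$ being a (weak) deformation retract makes $C/N$ contractible, so that, after contracting the $C$-parameter into the region where the map is forced to be the identity, one is reduced to the non-parametric Oka--Grauert and Runge theorems, which join a single continuous section to a holomorphic one and thence to the identity section. This path-connectedness is then globalised over $X$ by the very same induction used for (2) and (3), and one checks --- routinely --- that all the constructions commute with restriction to open subsets, which completes the proof.
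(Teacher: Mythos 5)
This theorem is quoted in the paper from Cartan \cite{cartan1958espaces} as a known input; the paper gives no proof of it, so there is nothing here to compare your sketch against line by line. On its own merits, your outline correctly identifies the structure of Cartan's parametric argument: exhaustion of the Stein space by holomorphically convex bricks on which $E$ trivialises, a nonabelian parametric Cousin/splitting lemma obtained from the linear splitting (Theorem~B for $\cO(\mathrm{Lie}(E))$ with $C$-parameter) followed by a Newton-type iteration, a parametric Runge lemma, and brick-by-brick globalisation to prove (2) and (3). You are also right that (1), (2), (3) have to be run together through the same induction, and that the weak-deformation-retract hypothesis is where (1) gets its teeth.

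The one place the sketch glosses over a genuine difficulty is precisely (1). The deformation $\varphi^{(s)}_t := \varphi_{r(t,s)}$ does \emph{not} stay in $H^0(X,\cF)$ for intermediate $s$: the retraction $r$ is only required to satisfy $r(x,0)=x$, $r(x,1)\in N$, and $r(x,1)=x$ on $N$; for $0<s<1$ there is no reason $r(t,s)\in N$ when $t\in N$ (so the identity-over-$N$ condition fails) nor $r(t,s)\in H$ when $t\in H$ (so holomorphicity over $H$ fails). Hence ``contracting the $C$-parameter into the region where the map is forced to be the identity'' does not by itself produce a path inside $\cF$, and the claim that this ``reduces to the non-parametric Oka--Grauert and Runge theorems'' hides the hard step. (The side remark that $C/N$ is contractible is also not what is needed and, strictly, requires a cofibration hypothesis that is not in the statement.) In Cartan's actual argument, the base-case path-connectedness of the local model $\cF(U)$ is the delicate point where the retraction is combined with parametric Oka--Weil approximation and a correction procedure, so that the intermediate deformations are repaired to satisfy the $(N,H,C)$ constraints at each stage. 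Your proposal has the right architecture and the right ingredients, but this repair step is the crux and needs to be supplied, not hand-waved.
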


The parametric Oka-Grauert principle for generalised principal bundles is an extension of the following classical (non-parametric)  principle, which appeared in the Cartan-Grauert period in \cite{cartan1958espaces,grauert1958}: 
\begin{theorem} \label{Cartan's thm 1}
  Let $P$ be an $E$-principal bundle over a base $X$ which is a Stein space. Then every continuous section of $f$ of $P$ is homotopic to a holomorphic section of $P$. 
  \end{theorem}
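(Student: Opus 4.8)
The plan is to deduce this from Theorem~\ref{theorem: main theorem}, applied with the triple $C = I = [0,1]$, $N = \{0\}$ and $H = \{0,1\}$. Here $N \subseteq H \subseteq C$ and $N$ is a weak deformation retract of $C$ (via $r(x,s) = (1-s)x$), so the hypotheses hold and in particular $H^1(X,\cF) = 0$, where $\cF$ is the sheaf of $(\{0\},\{0,1\},[0,1])$-maps in $\cE^c$; concretely, $\cF(V)$ consists of the continuous families $(\varphi_t)_{t\in[0,1]}$ of sections of $\cE^c$ over $V$ with $\varphi_0$ the identity section and $\varphi_1$ holomorphic. I would then introduce the sheaf $\cS$ on $X$ whose sections over an open $V$ are the continuous homotopies $(h_t)_{t\in[0,1]}$ of sections of $P|_V$ with $h_0 = f|_V$ and $h_1$ a holomorphic section of $P|_V$. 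A global section of $\cS$ over $X$ is precisely a homotopy from $f$ to a holomorphic section of $P$, so the task reduces to showing $\cS(X) \neq \varnothing$.

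First I would verify that $\cS$ is a torsor under $\cF$. The pointwise right $E$-action $h_t \mapsto h_t\cdot\varphi_t$ makes $\cF$ act on $\cS$, and this action preserves $\cS$ because $\varphi_0$ is the identity and $\varphi_1$ is holomorphic. Fibrewise freeness and transitivity of the $E$-action on $P$, together with the fact that the ``difference'' of two elements of $\cS(V)$ is the identity section at $t=0$ (both ends are $f$) and holomorphic at $t=1$ (both ends are holomorphic sections), and hence lies in $\cF(V)$, show that $\cF$ acts freely and transitively on each nonempty $\cS(V)$. (This is exactly why one takes $H = N\cup\{1\}$ rather than $H = \{1\}$: the difference family is $(\{0\},\{0,1\},[0,1])$-admissible, and $(\{0\},\{1\},[0,1])$ is not even a legitimate triple, as it violates $N \subseteq H$.) Next I would produce local sections of $\cS$: as a holomorphic bundle, $P$ is locally holomorphically trivial, and $X$, being a complex space, is locally contractible with a basis of contractible Stein open sets, so $X$ has a cover by contractible Stein opens $V$ over which $P$ (hence $E$) is holomorphically trivial. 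Fixing such a trivialisation, $f|_V$ is represented by a continuous $G$-valued function which -- as $V$ is contractible -- is homotopic to a constant map, i.e.\ to a holomorphic constant section; transporting this homotopy back to $P|_V$ produces an element of $\cS(V)$. Hence $\cS$ is a genuine $\cF$-torsor.

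Finally, since $\cF$-torsors on $X$ are classified by the pointed set $H^1(X,\cF)$, which vanishes by Theorem~\ref{theorem: main theorem}(3), the torsor $\cS$ is trivial, so $\cS(X) \neq \varnothing$ and we are done. I expect essentially all of the difficulty to be absorbed into Theorem~\ref{theorem: main theorem} itself -- the Cartan--Grauert analytic core, i.e.\ a bumping/exhaustion argument over the Stein space combined with Runge-type approximation and the vanishing of a first cohomology group. The work that remains is bookkeeping: fixing the left/right conventions for the $E$-action so that differences of local homotopies genuinely land in $\cF$, checking that $\cS$ is a sheaf and an $\cF$-torsor, and constructing the local homotopies -- all routine, with the one genuinely error-prone point being the identification of the correct triple $(N,H,C)$, and in particular the choice $H = N\cup\{1\}$ rather than $H = \{1\}$.
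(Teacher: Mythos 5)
The paper states Theorem~\ref{Cartan's thm 1} without proof, attributing it to the classical works of Cartan and Grauert; there is thus no ``paper's own proof'' to compare against literally. Your reconstruction is correct, and it is in fact essentially Cartan's own deduction of this statement from his fundamental theorem (the paper's Theorem~\ref{theorem: main theorem}). The choice of triple $C = [0,1]$, $N = \{0\}$, $H = \{0,1\}$ is the right one: $N$ is visibly a weak deformation retract of $C$, and you correctly observe that the constraint $N \subset H$ forces $H = \{0,1\}$ rather than $\{1\}$, while the extra condition at $t=0$ is vacuous since the identity section is already holomorphic. Making the sheaf $\cS$ of homotopies from $f$ to a holomorphic section into an $\cF$-torsor is the natural way to encode the problem so that part~(3) of Theorem~\ref{theorem: main theorem} applies. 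The two places where care is needed are both handled: the difference of two elements of $\cS(V)$ is a continuous $\cE^c$-valued family (because the division map $P \times_X P \to E$ of a principal bundle is continuous, and holomorphic at $t=1$) and genuinely lands in $\cF(V)$; and local nonemptiness of $\cS$ follows from local triviality of $P$ together with the fact that a continuous map into the complex Lie group $G$ from a sufficiently small open set is homotopic to a constant — one can even avoid invoking local contractibility of a reduced complex space by working in an exponential chart of $G$ around $f(x_0)$. A byproduct worth noting explicitly is that your torsor argument does not presuppose the existence of any holomorphic section of $P$ (which would be a separate application of Theorems~A/B of Cartan); the global section of $\cS$ simultaneously produces a holomorphic section and the homotopy to it. This is cleaner than arguments routing through path-connectedness of $H^0(X,\cF)$, which would require first knowing a holomorphic section exists.
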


  We may now state and prove the parametric principle. 

  \begin{theorem} \label{parametric Cartan} 
    Let $X$ be a reduced Stein space, $E$ a holomorphic Lie group bundle with base $X$, and $P$ be a holomorphic $E$-principal bundle over $X$. The inclusion $\Gamma_\cO(P, X) \hookrightarrow \Gamma_\cC(P,X)$ of the space $\Gamma_\cO(P, X)$ of holomorphic sections of $P$ into the space $\Gamma_\cC(P,X)$ of continuous sections of $P$ is a weak homotopy equivalence.
    \end{theorem}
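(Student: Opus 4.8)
The plan is to reduce the statement, via a trivialisation of $P$, to conclusion (1) of Theorem~\ref{theorem: main theorem}. I would use the standard characterisation of weak homotopy equivalences: an inclusion $\iota\colon A\hookrightarrow B$ is a weak homotopy equivalence precisely when, for every $n\geq 0$, every continuous map of pairs $(D^n,S^{n-1})\to(B,A)$ is homotopic, through maps of pairs, to a map with image in $A$ (for $n=0$, with $S^{-1}=\emptyset$, this is surjectivity of $\pi_0A\to\pi_0B$). For $A=\Gamma_\cO(P,X)$ and $B=\Gamma_\cC(P,X)$, the case $n=0$ is exactly Theorem~\ref{Cartan's thm 1}, so I may assume $n\geq 1$; and if $\Gamma_\cC(P,X)=\emptyset$ then $\Gamma_\cO(P,X)=\emptyset$ too and $\iota$ is the identity of the empty space, so I may also assume $\Gamma_\cC(P,X)\neq\emptyset$.

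First I would trivialise $P$: Theorem~\ref{Cartan's thm 1} gives a holomorphic section $\sigma$ of $P$, and $\,e_x\mapsto\sigma(x)\cdot e_x\,$ defines an isomorphism $E\cong P$ of holomorphic $E$-principal bundles ($E$ acting on itself by right translation). Transporting sections along it identifies $\Gamma_\cC(P,X)$ with the topological group $\cE^c(X)=H^0(X,\cE^c)$ and $\Gamma_\cO(P,X)$ with its subgroup $\cE^a(X)=H^0(X,\cE^a)$; and, by the exponential law for the compact-open topology (applicable since $X$, $D^n$ and $I$ are locally compact Hausdorff), a continuous map of the disc into either section space is the same datum as a continuous map $D^n\times X\to E$ over $X$ that is holomorphic along $\{t\}\times X$ for each relevant $t$. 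So the task becomes: every continuous map of pairs $\psi\colon(D^n,S^{n-1})\to(\cE^c(X),\cE^a(X))$ is homotopic, through maps of pairs, to a map into $\cE^a(X)$. After fixing $c_0\in S^{n-1}$ and replacing $\psi$ by $\psi(c_0)^{-1}\psi$ — legitimate because $\psi(c_0)\in\cE^a(X)$ and left translation by an element of $\cE^a(X)$ preserves $\cE^a(X)$ — I may also assume $\psi(c_0)=e$.

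Next I would apply Cartan's theorem to the right sheaf. With $N:=\{c_0\}$, $H:=S^{n-1}$ and $C:=D^n$, one has $N\subseteq H\subseteq C$ closed, $C$ compact Hausdorff, and $N$ a strong (hence weak) deformation retract of $C$, so Theorem~\ref{theorem: main theorem} applies to the sheaf $\cF$ of $(N,H,C)$-maps in $\cE^c$; by construction $\psi$ is precisely an element of $H^0(X,\cF)=\cF(X)$. Conclusion (1) says the topological group $\cF(X)$ is path connected, so there is a path $s\mapsto\psi^{(s)}$ in $\cF(X)$ from $\psi^{(0)}=\psi$ to the identity element $\psi^{(1)}$, which is the constant family at the identity section of $E$. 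Regarding $(s,t)\mapsto\psi^{(s)}(t)$ as a homotopy $I\times D^n\to\cE^c(X)$ (jointly continuous, again by the exponential law), each stage is a map of pairs $(D^n,S^{n-1})\to(\cE^c(X),\cE^a(X))$ — membership of $\psi^{(s)}$ in $\cF(X)$ forces holomorphicity along $S^{n-1}$ — and the terminal stage, being the constant map at the identity section, has image in $\cE^a(X)$. This is the required homotopy; translating back and undoing the trivialisation finishes the proof.

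Because Theorem~\ref{theorem: main theorem} is taken as given, the deduction is essentially formal; the only real choice is the triple $(N,H,C)$, and the point to get right is that the naive choice $N=H=S^{n-1}$ is \emph{inadmissible}, since $S^{n-1}$ is not a weak deformation retract of $D^n$ — this is exactly what forces one to take $N$ a single point and to translate the family to be the identity there. The remaining obstacles are purely point-set: passing between continuous maps into the section spaces and continuous maps out of $D^n\times X$, and between paths in the topological group $\cF(X)$ and homotopies through maps of pairs; all of this is covered by the exponential law. (Conclusions (2) and (3) of Theorem~\ref{theorem: main theorem} are not used in this deduction — they enter only within Cartan's proof of conclusion (1), which is where the analytic substance, via his exhaustion-and-bump method, actually resides.)
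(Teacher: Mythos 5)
Your proposal is correct and essentially coincides with the paper's proof: both trivialise $P$ via a holomorphic section to view sections of $P$ as sections of the group bundle $E$, translate by that fixed holomorphic section so the given family is the identity at a basepoint of $\partial D^n$, and then invoke Theorem~\ref{theorem: main theorem}(1) for the sheaf $\cF$ of $(N,H,C)$-maps with $(N,H,C)=(\{b_0\},S^{n-1},D^n)$ to deform the family to the identity. The only differences are cosmetic — you phrase the conclusion via compression of pairs (vanishing of $\pi_n(B,A)$) where the paper phrases it as a simultaneous $\pi_n$-mono / $\pi_{n+1}$-epi argument, and you translate on the left by $\psi(c_0)^{-1}$ where the paper translates on the right by $f_0^{-1}$ — so no substantive comparison is needed.
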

    \begin{proof} Let $f_0 \in \Gamma_\cO(P, X)$ be arbitrary. We already know (cf. Theorem \ref{Cartan's thm 1}) that the inclusion $\Phi: \Gamma_\cO(P, X) \hookrightarrow \Gamma_\cC(P,X)$ induces a surjection of path components. We may assume that $\Gamma_\cO(P, X)$ is nonempty: if there were no holomorphic sections, then there would be no continuous sections. By identifying $f_0$ with the identity section of $E$, we may view any section of $P$ as a section of the structure group bundle $E$. This implies that $\Gamma_\cO(P, X)$ and $\Gamma_\cC(P, X)$ are topological groups, and therefore the sets $\pi_0(\Gamma_\cO(P, X))$ and $\pi_0(\Gamma_\cC(P, X))$ are themselves groups. Thus, to show that the homomorphism \[\pi_n(\Gamma_\cO(P, X), f_0) \to \pi_n(\Gamma_\cC(P, X), f_0)\] is injective for each $n \geq 0$, we only need to show that the kernel vanishes. 
    
    We will prove at the same time that $\Phi$ induces a $\pi_n$-monomorphism and a $\pi_{n+1}$-epimorphism for all $n \geq 0$. Let $B^n$ be the closed unit ball in $\RR^n, n \geq 1$, and let $b_0 \in \partial B^n$ be a given basepoint. Assume we have a continuous map $\varphi: B^n \to \Gamma_\cC(P, X)$ such that $\varphi (\partial B^n) \subset \Gamma_\cO(P, X)$ and $\varphi(b_0) = f_0$. We show that there exists a continuous map $\tilde{\varphi}: B^n \times I \to \Gamma_\cC(P, X)$ such that 
    \begin{enumerate}
    \item $\tilde{\varphi}(\cdot, 0) = \varphi$,
    \item $\tilde{\varphi}(b_0, \cdot) = f_0,$ and 
    \item the image of $\tilde{\varphi}(\cdot, 1)$ lies in $\Gamma_\cO(P, X)$.
    \end{enumerate}
    Define a map $\gamma: B^n \to \Gamma_\cC(P, X)$ by $b \mapsto \varphi(b)\cdot f_0^{-1}$. Viewing sections of $P$ over $X$ as sections $E$ over $X$, observe that $\gamma$ is an $(N, H, C)$-map for $C = B^n, H = \partial B^n$ and $N = \{b_0\}$. By (1) of Theorem \ref{theorem: main theorem}, $H^0(X, \cF)$ is path connected, which implies that there exists a continuous map $\tilde{\gamma}: B^n \times I \to \Gamma_{\cC}(P, X)$ such that $\tilde{\gamma}(\cdot, 0) = \gamma$, and $\tilde{\gamma}(b_0, \cdot) = \tilde{\gamma}(\cdot, 1) = e$ (the identity section of $E$). Define \[\tilde{\varphi}: B^n \times I \to \Gamma_\cC(P, X), \,(b, t) \mapsto \tilde{\gamma}(b, t) \cdot f_0.\] This map satisfies the required conditions and concludes the proof.
    \end{proof}
\section{Homotopy-theoretic results} \label{sec:homotopy}

The $S^{-1}S$ construction, reported by D.~Grayson in \cite{grayson2006higher} based on D.~Quillen's work from the early 1970s, is a significant generalisation of the Grothendieck group completion process to symmetric monoidal categories. Quillen used the $S^{-1}S$ construction to define the higher algebraic K-theory of a symmetric monoidal category $S$. We require a simplicially enriched version of the $S^{-1}S$ construction to handle the topological (or simplicial) information about hom-sets. This leads us to work with simplicial categories; that is, categories enriched over sSet, the category of small simplicial sets. A simplicial category may be equivalently interpreted as a simplicial object in the category Cat of small categories, with the additional condition that the simplicial operators induce the identity morphism on the objects of the categories associated with each simplicial level.

Our focus lies in particular with symmetric monoidal categories enriched over sSet -- which serve as inputs to the enriched $S^{-1}S$ construction, defined below -- and in studying the behaviour of the enriched $S^{-1}S$ construction under weak equivalences of such categories. Specifically, we present conditions under which weak equivalences are preserved, culminating in a series of results that form the technical foundation for the subsequent (and main) section of this paper. 

\subsection{Simplicial symmetric monoidal categories} \label{ssmc}
 We recall the notion of a symmetric monoidal category: a category $S$ with a functor $\square: S \times S \to S$, a unit object $e$, and natural isomorphisms -- which satisfy certain coherence conditions -- that establish associativity, commutativity, and identity laws (see \cite{mac2013categories} for more context and precise details). 

\begin{definition}
A simplicial symmetric monoidal category is a simplicial category $S$ with a \textit{simplicial} symmetric monoidal functor $\square: S \times S \to S$. Expressly, given objects $a, b, c, d \in S$, the induced map on hom-sets, $S(a, b) \times S(c, d) \to S(a \, \square \, c, b \, \square \, d)$, is a simplicial map. 
\end{definition}

We will soon see illustrative examples of the above. First, we establish some notation: for a simplicial category $S$, its category of components $\pi_0 S$ has the same objects as $S$, with hom-sets $\pi_0 S(x,y) = \pi_0(S(x,y))$ for objects $x,y$ in $S$. Composition in $\pi_0 S$ is induced from the composition in $S$. 

Throughout this paper, we use Bergner's model category structure \cite{bergner2007model} on simplicial categories, where weak equivalences are taken to mean \textit{Dwyer-Kan equivalences}, first described by Dwyer and Kan in \cite{dwyer1980function}. In our context of simplicial symmetric monoidal categories, we need to further ask that the Dwyer-Kan equivalences respect the symmetric monoidal structure. Here is the definition we use:
\begin{definition} \label{simp dwyer kan}
    If $(S_1, \square_1)$ and $(S_2, \square_2)$ are simplicial symmetric monoidal categories, an sSet-enriched functor $F: S_1 \to S_2$ is a \textit{weak equivalence} of simplicial symmetric monoidal categories if 
\begin{enumerate}
\item[(1)] $F$ is an monoidal functor,
\item[(2)] if $a, b \in \mathrm{ob}(S_1)$, then the induced map $S_1(a, b) \to S_2(F(a), F(b))$ is a weak equivalence of simplicial sets, and 
\item[(3)] the induced functor between the categories of components, $\pi_0f: \pi_0S_1 \to \pi_0S_2$, is an equivalence of categories. 
\end{enumerate}
\end{definition}

\noindent Before moving on to the details of the group completion construction, we provide two examples that are illustrative and particularly pertinent to us. 
\begin{example} \label{ex 1}
   Consider a homotopy equivalence $X \rightarrow Y$ of paracompact topological spaces. Let $i\mathrm{Vect}^\mathcal{C}_X$ and $i\mathrm{Vect}^\mathcal{C}_Y$ denote the categories of complex topological vector bundles on $X$ and $Y$, respectively, with morphisms restricted to vector bundle isomorphisms. We may enrich these categories in topological spaces by equipping each hom-set -- say, from $V$ to $W$ -- with the topology of $\Gamma(Q)$, the space of sections of the isomorphism bundle $Q$ from $V$ to $W$ (see \cite{cartan1958espaces}). These categories form topological symmetric monoidal categories under direct sum. The pullback operation induces a topological functor  $(\mathrm{Vect}^\mathcal{C}_Y, \oplus) \to (\mathrm{Vect}^\mathcal{C}_X, \oplus)$ given by  $(E \to Y) \mapsto (E \times_Y X \to X)$.

Since the morphism spaces are spaces of sections of principal bundles, the homotopy invariance theorem for fibre bundles (cf. Corollary \ref{cor: htpy invariance}) implies that the pullback functor satisfies analogues of the Dwyer-Kan conditions in the category of topologically enriched categories. We can then apply the singular functor to the morphism spaces to convert these categories into simplicial categories, making the functor a weak equivalence of simplicial symmetric monoidal categories. 
\end{example}
The following extended example is an Oka principle; in fact, it is an an elementary version of the main result of this paper. 
\begin{example} \label{ex 2}
Let $X$ be a reduced Stein space. Consider the categories $i\mathrm{Vect}^\mathcal{C}_X$ and $i\mathrm{Vect}^\mathcal{O}_X$ whose objects are, respectively, the complex topological and holomorphic vector bundles over $X$. The morphisms in each of the categories are isomorphisms of vector bundles, topological and holomorphic respectively. As is classically known, any object in $i\mathrm{Vect}^\mathcal{C}_X$ (or $i\mathrm{Vect}^\mathcal{O}_X$) with constant rank $n$ is determined by an associated principal bundle over $X$ with structure group $GL_n\mathcal\CC$. Moreover, these categories come equipped with a symmetric monoidal functor given by the direct sum ($\oplus$) of vector bundles. As the symmetric monoidal structure can tie together vector bundles with varying ranks on different connected components, we may restrict our attention to bundles of constant rank.

As in the the previous example (Ex. \ref{ex 1}), the categories $i\mathrm{Vect}^\mathcal{C}_X$ and $i\mathrm{Vect}^\mathcal{O}_X$ both admit enrichments over topological spaces: indeed, isomorphisms between two objects, say from $V$ to $W$, are sections of the isomorphism bundle from $V$ to $W$ (see \cite[\S3]{cartan1958espaces}). Direct sum is compatible with this enrichment. Moreover, in the holomorphic category $i\mathrm{Vect}^\mathcal{O}_X$, each isomorphism bundle admits a holomorphic structure. 

Now, consider the functor $\iota: (i\mathrm{Vect}^\mathcal{O}_X, \oplus) \to (i\mathrm{Vect}^\mathcal{C}_X, \oplus)$, induced by inclusion, between these topological symmetric monoidal categories.
If $V$ and $W$ are objects in $i\mathrm{Vect}^\mathcal{O}_X$, the parametric Oka principle for generalised principal bundles (Theorem \ref{parametric Cartan}) implies that the inclusion $\Gamma_\cO(Q, X) \to \Gamma_\cC(Q, X)$ of the space of holomorphic sections into the space of continuous sections of the isomorphism bundle $Q$ (from $V$ to $W$) is a weak homotopy equivalence. This is the topological analogue of condition (2) of Definition \ref{simp dwyer kan}. 

The functor $\iota$ also meets the topological analogue of condition (3); that is, the induced functor between the categories of components, $\pi_0\iota: \pi_0(i\mathrm{Vect}^\mathcal{O}_X) \to \pi_0(i\mathrm{Vect}^\mathcal{C}_X)$, is an equivalence of categories. Indeed, Theorem \ref{parametric Cartan} already establishes that the functor is full and faithful. We get essential surjectivity from Theorem B of \cite{cartan1958espaces} -- which states that on every {topological}  (generalised) principal bundle, there is a compatible {holomorphic} structure. 
By applying the singular functor Sing to the morphism spaces, these categories may be enriched in simplicial sets. As the category of simplicial sets is Quillen equivalent to the category of topological spaces, Sing($\iota$) is a weak equivalence of simplicial symmetric monoidal categories.
\end{example}

\subsection{The group completion construction} \label{gp completion}

Our first aim is to upgrade Quillen's discrete $S^{-1}S$ construction to a simplicially enriched version.  We do this by applying the $S^{-1}S$ construction to each simplicial level, assembling them together, and ensuring that simplicial structures are preserved. Let $(S, \square)$ be a simplicial symmetric monoidal  category.  To start, we make some assumptions about $S$. 

\begin{definition} \label{def: good}
    Let $S$ be a small simplicial symmetric monoidal category; that is, it has a set of objects and simplicial sets of morphisms. We say that $S$ is \textit{good} when         
\begin{enumerate}
    \item every morphism is an isomorphism, and
    \item for each $x \in \mathrm{ob}(S)$, the simplicial functor $x \square -: S \to S$ is faithful.
\end{enumerate} 
\end{definition}
The original $S^{-1}S$ construction \cite{grayson2006higher} for a discrete symmetric monoidal category $S$ uses the second condition above -- that  \textit{translations are faithful in $S$} -- to prove that $BS^{-1}S = |NS^{-1}S|$ is indeed a group completion for $BS = |NS|$. As we produce a simplicially enriched version of the same construction in the following section, we make this assumption as well. Moreover, as we will see, assuming that translations are faithful provides greater control over the morphisms in the simplicial category $S^{-1}S$, and is a key assumption the proof of Theorem \ref{main prop}.
\begin{example}
    Recall the topologically enriched category $i\mathrm{Vect}^\mathcal{O}_X$ from Example \ref{ex 2}, whose objects are holomorphic vector bundles over $X$, and whose morphism spaces  -- say from $V$ to $W$ -- are given by $\Gamma_\cO(Q, X)$, the space of holomorphic sections of the isomorphism bundle $Q$ from $V$ to $W$. Equipped with the direct sum $\oplus$, this category is symmetric monoidal (evidently, $\oplus$ is compatible with the topological enrichment). As in Example \ref{ex 2}, we may apply the singular functor to the morphism spaces to obtain a simplicial symmetric monoidal category, and it is simple to verify that this category is good. 
\end{example}

\begin{definition}
    For a simplicial symmetric monoidal category $(S, \square)$, we obtain a new simplicial symmetric monoidal category $S^{-1}S$.   The objects of $S^{-1}S$ are pairs $(x, y)$ of objects in $S$, and the simplicial hom-sets are defined as follows. For objects $(x, y)$ and $(z, w)$, the $n$th simplicial level of the hom-set $S^{-1}S((x, y), (z, w))$ is \[ S^{-1}S((x, y), (z, w))_n = \faktor{\{ (s, f, g) \vert s \in \mathrm{ob}(S), \, f \in S(s \square x,z)_n, \, g \in S(s \square y, w)_n \}}{\sim_n},\] where two $n$-simplices $(s, f, g)$ and $(s', f', g')$ are equivalent under $\sim_n$ precisely when there is an isomorphism $\alpha: s \to s'$ in $S(s, s')_n$ so that $f = f' \circ (\alpha \square 1_x)$ and $g = g' \circ (\alpha \square 1_x)$, where $1_x \in S(x, x)_n$ and $1_y \in S(y, y)_n$ are the degenerate identity $n$-simplices. When $S$ is good, note that the morphisms of $S^{-1}S$ determine $s, f,$ and $g$ up to unique isomorphism. The functor $\tilde{\square}$ is given by $(x_1, x_2) \tilde{\square} (y_1, y_2) = (x_1 \square y_1, x_2 \square y_2)$.
\end{definition}

We must now verify that this definition is valid -- that these $n$-simplices assemble into simplicial sets of morphisms, and give $S^{-1}S$ the structure of a simplicial category.
Firstly, the sets $S^{-1}S((x, y),(z, w))_n, n\geq 0,$ form a simplicial set $S^{-1}S((x, y),(z, w))$ with face and degeneracy maps induced by those from $S(s \square x, z)$ and $S(s \square y, w)$. Evidently, these maps are well defined under the equivalence relation because $S$ is a simplicial category -- in particular, because composition in $S$ commutes with the face and degeneracy maps.

Secondly, composition in  $S^{-1}S$ is defined as follows: given $n$-simplices $(s, f, g) \in S^{-1}S(x_1, x_2)$ and $(t, a, b) \in S^{-1}S(x_2, x_3)$, let \[(t, a, b) \circ (s, f, g)  = (t \square s, a \circ (t \square f), b \circ (t \square g)).\] We observe immediately that this definition of composition respects the simplicial structures of the hom-sets (that is, it is compatible with face and degeneracy maps), because $s \square -$ is a simplicial functor for all $s \in S$, and composition in $S$ respects the simplicial structures of hom-sets in $S$. We also note that the composition is associative, because $\square$ is associative up to coherent natural isomorphism, and the equivalence relation $\sim_n$ allows us to identify factors up to isomorphism; that is, the associator $\alpha: (t \square s) \square r \to t \square (s \square r)$ in $(S, \square)$ makes the $n$-simplices 
\[[(t, a, b) \circ (s, f, g)] \circ (r, u, v) = ((t \square s) \square r, (a \circ(t \square f))\circ (t \square s) \square u, (b \circ(t \square g))\circ (t \square s) \square v)\] and \[(t, a, b) \circ [(s, f, g) \circ (r, u, v)] = (t \square (s \square r), a \circ (t \square (f \circ (s \square u))), b \circ (t \square (g \circ (s \square v))))\] equivalent in $S^{-1}S$. 

The final piece we need for $S^{-1}S$ to be a simplicial category is identity morphisms. In $S^{-1}S((x, y)(x, y))_n$, we let the identity $n$-simplex be $(e, 1_x, 1_y)$ where $e$ is the unit object in $(S, \square)$, and $1_x$ and $1_y$ are, as above, the degenerate identity $n$-simplices in $S(x, x)_n$ and $S(y, y)_n$ respectively.

So far, we have constructed a simplicial category $S^{-1}S$ out of $S$. Finally, we consider the symmetric monoidal product $\tilde{\square}$. Observe that the induced functor \[S^{-1}S((a_1, a_2), (b_1, b_2)) \times S^{-1}S((c_1, c_2), (d_1, d_2)) \to S^{-1}S((a_1 \square c_1, a_2 \square c_2), (b_1 \square d_1, b_2 \square d_2))\] is monoidal, and is compatible with the face and degeneracy maps of $S^{-1}S$, because $\square$ respects the simplicial structures of the hom-sets of $S$. We have thus constructed out of $(S, \square)$ a simplicial symmetric monoidal category $(S^{-1}S, \tilde{\square})$, which comes with a simplicial symmetric monoidal functor 
$S \to S^{-1}S$ that maps $x \in \mathrm{ob}(S)$ to $(e, x) \in \mathrm{ob}(S^{-1}S)$ and $g \in S(x, y)$ to $(e, 1_e, g) \in S^{-1}S((e, x),(e,y))$.
 
Finally, as below, the construction is functorial (Proposition \ref{prop: Lambda}). The proof is straightforward, and we omit it. 
\begin{definition}
We denote by $\mathrm{SymMonCat}_\Delta$ the category whose objects are the small simplicial symmetric monoidal categories, and whose morphisms $(S_1, \square_1) \to (S_2, \square_2)$ are sSet-enriched monoidal functors. We denote by $\mathrm{SymMonGpd}_\Delta^\dagger$ the full subcategory of good simplicial symmetric monoidal categories.
\end{definition}

\begin{proposition} \label{prop: Lambda}
    There is a well-defined functor $\Lambda: \mathrm{SymMonGpd}_\Delta^\dagger \to  \mathrm{SymMonCat}_\Delta$, defined as follows: 
    
    \begin{enumerate}
        \item on objects $(S, \square)$ by
            $\Lambda(S, \square) = (S^{-1}S, \tilde{\square})$, and
        
        \item on morphisms $\phi: (S_1, \square_1) \to (S_2, \square_2)$ by
            $\Lambda(\phi): (S_1^{-1}S_1, \tilde{\square}_1) \to (S_2^{-1}S_2, \tilde{\square}_2)$, which sends
            \begin{itemize}
                \item $(x, y) \in \mathrm{ob}(S_1^{-1}S_1)$ to $(\phi(x), \phi(y))\in \mathrm{ob}(S_2^{-1}S_2)$, and 
                \item $[s, f, g] \in \mathrm{mor}(S_1^{-1}S_1)$ to $[\phi(s), \phi(f), \phi(g)] \in \mathrm{mor}(S_2^{-1}S_2)$.
            \end{itemize}
    \end{enumerate}
    \end{proposition}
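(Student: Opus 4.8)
The plan is to verify that $\Lambda$ as described is well-defined on objects and morphisms, respects composition and identities, respects the simplicial enrichment, and is compatible with the monoidal structures. First I would check that $\Lambda(\phi)$ is well-defined on morphisms: given an $n$-simplex $[s,f,g]$ in $S_1^{-1}S_1((x,y),(z,w))_n$, one sends it to $[\phi(s),\phi(f),\phi(g)]$, where $\phi(f) \in S_2(\phi(s)\,\square_2\,\phi(x),\phi(z))_n$ is obtained by using the monoidal structure isomorphism $\phi(s)\,\square_2\,\phi(x) \xrightarrow{\sim} \phi(s\,\square_1\,x)$ and then applying $\phi$ to $f$; similarly for $\phi(g)$. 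I would then check that if $(s,f,g) \sim_n (s',f',g')$ via an isomorphism $\alpha\colon s \to s'$ in $S_1(s,s')_n$ with $f = f'\circ(\alpha\,\square_1\,1_x)$ and $g = g'\circ(\alpha\,\square_1\,1_y)$, then $\phi(\alpha)$ witnesses $(\phi(s),\phi(f),\phi(g)) \sim_n (\phi(s'),\phi(f'),\phi(g'))$; this follows because $\phi$ is a monoidal functor, so it sends $\alpha\,\square_1\,1_x$ to (a morphism naturally identified with) $\phi(\alpha)\,\square_2\,1_{\phi(x)}$ and is functorial with respect to composition. Faithfulness of translations in the good category $S_1$ guarantees (via the parenthetical remark in the definition of $S^{-1}S$) that these representatives are pinned down up to unique isomorphism, so there is no ambiguity.

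Next I would verify the three structural compatibilities. Functoriality in the categorical sense: $\Lambda(\phi)$ preserves composition, since composition in $S^{-1}S$ is built from $\square$ and composition in $S$, both of which $\phi$ respects up to the coherent monoidal isomorphisms, and the equivalence relation $\sim_n$ absorbs those isomorphisms; and $\Lambda(\phi)$ sends the identity $n$-simplex $(e_1,1_x,1_y)$ to $(\phi(e_1),1_{\phi(x)},1_{\phi(y)})$, which is the identity in $S_2^{-1}S_2$ because $\phi$ carries the unit $e_1$ to an object isomorphic to $e_2$ via the unit coherence isomorphism. Simpliciality: since $\phi$ is an sSet-enriched functor, it commutes with all face and degeneracy operators on the hom-simplicial-sets of $S_1$, and $\Lambda(\phi)$ inherits this level-wise, so $\Lambda(\phi)$ is itself sSet-enriched. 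Monoidality: $\Lambda(\phi)$ sends $(x,y)\,\tilde{\square}_1\,(x',y') = (x\,\square_1\,x', y\,\square_1\,y')$ to $(\phi(x\,\square_1\,x'),\phi(y\,\square_1\,y'))$, which via the monoidal coherence isomorphisms of $\phi$ is identified with $(\phi(x)\,\square_2\,\phi(x'),\phi(y)\,\square_2\,\phi(y')) = \Lambda(\phi)(x,y)\,\tilde{\square}_2\,\Lambda(\phi)(x',y')$, and the analogous statement on morphisms holds by the same bookkeeping; the coherence conditions for $\Lambda(\phi)$ being monoidal reduce to those for $\phi$.

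Finally I would check that $\Lambda$ itself is a functor $\mathrm{SymMonGpd}_\Delta^\dagger \to \mathrm{SymMonCat}_\Delta$, i.e.\ $\Lambda(\psi\circ\phi) = \Lambda(\psi)\circ\Lambda(\phi)$ and $\Lambda(\mathrm{id}) = \mathrm{id}$. These are immediate from the pointwise formulas together with the fact that composites and identities of monoidal functors are again monoidal, noting that the monoidal structure isomorphisms used to define $\Lambda(\psi\circ\phi)$ on $f,g$ are the composites of those for $\phi$ and $\psi$, so the resulting triples agree on the nose (or at worst up to the unique isomorphism allowed by $\sim_n$). The main obstacle — though it is bookkeeping rather than a genuine difficulty, which is why the authors omit it — is keeping the coherence isomorphisms straight: a monoidal functor does not strictly commute with $\square$, so every clause above passes through the structural isomorphisms $\phi(a)\,\square_2\,\phi(b) \xrightarrow{\sim} \phi(a\,\square_1\,b)$, and one must check that all the diagrams these generate commute, which they do by the coherence axioms for monoidal functors and the fact that $\sim_n$ is coarse enough to swallow any remaining discrepancy. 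Hence $\Lambda$ is well-defined, and the proof is complete.
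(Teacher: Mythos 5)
Your verification is correct, and it is the routine check that the paper explicitly omits (``The proof is straightforward, and we omit it''). Your extra care in threading the monoidal coherence isomorphisms through every clause is appropriate given that morphisms in $\mathrm{SymMonCat}_\Delta$ are not assumed strict, though the paper itself later declares it will work with strict monoidal functors ``for readability,'' under which assumption most of your bookkeeping collapses to equalities on the nose.
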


    \subsection{Weak equivalences and $S^{-1}S$}

    Our main interest is now to investigate how this construction behaves under weak equivalences. In particular, if we start with a weak equivalence $F: (S_1, \square_1) \to (S_2, \square_2)$ of good simplicial symmetric monoidal categories, what do we glean about the induced functor $(S_1^{-1}S_1, \tilde{\square}_1) \to (S_2^{-1}S_2, \tilde{\square}_2)$? We might expect $(S_1^{-1}S_1, \tilde{\square}_1) \to (S_2^{-1}S_2, \tilde{\square}_2)$ to be a weak equivalence also, and we find that this is indeed the case. We spell out the details below in Theorem \ref{main prop}.
    
    We first present an ingredient lemma. If $X$ is a simplicial set, and $G$ is a simplicial group, we say that $G$ \textit{acts on} $X$ if $G_n$ acts on each $X_n$ in a manner compatible with the face and degeneracy operators. The orbit spaces $X_n/G_n$ assemble to form a simplicial set, which we denote by $X/G$. (The simplicial structure of $X/G$ is inherited from $X$, and this structure is well defined because the action of $G$ respects the face and degeneracy operators of $X$ -- the details are straightforward to verify.)
    \begin{lemma} \label{prep l}
        Let $X$ and $Y$ be fibrant simplicial sets, along with free actions $F \times X \to X$ and $G \times Y \to Y$ by simplicial groups $F$ and $G$ respectively. Assume that there are weak equivalences $X \to Y$ and $F \to G$ of simplicial sets such that the diagram 
        \[\begin{tikzcd}
            F \times X \arrow{r} \arrow[swap]{d} & X \arrow{d} \\
            G \times Y \arrow{r} & Y
            \end{tikzcd}
            \] commutes. Then the induced map $X/F \to Y/G$ of simplicial sets is also a weak equivalence.
    \end{lemma}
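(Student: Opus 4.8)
The plan is to exploit the fact that a free action of a simplicial group $G$ on a simplicial set $Y$ makes $Y \to Y/G$ into a principal $G$-bundle in the simplicial-set sense; in particular it is a Kan fibration (with fibre $G$), since $G$ is automatically fibrant as a simplicial group and the quotient map of a free simplicial group action is a minimal-type fibration — more precisely, $Y \to Y/G$ is a $G$-torsor and hence a fibre bundle with fibre $G$, so it is a Kan fibration by the simplicial analogue of the fact that fibre bundles over a base are fibrations. First I would record this: both $X \to X/F$ and $Y \to Y/G$ are Kan fibrations, and their fibres over any vertex are copies of $F$ and $G$ respectively (using freeness to identify each orbit with the group). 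Then I would form the commutative ladder
\[
\begin{tikzcd}
F \arrow{r} \arrow{d} & X \arrow{r} \arrow{d} & X/F \arrow{d} \\
G \arrow{r} & Y \arrow{r} & Y/G
\end{tikzcd}
\]
in which the left square exhibits $F \hookrightarrow X$ and $G \hookrightarrow Y$ as fibre inclusions (after choosing compatible basepoints, which we may since $X \to Y$ is surjective on components up to homotopy and the actions are compatible) and the vertical maps on $F$ and $X$ are the given weak equivalences.

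The key step is then a comparison of the long exact sequences of homotopy groups (or, more robustly, a fibrewise argument): since $F \to G$ and $X \to Y$ are weak equivalences, the five lemma applied to the long exact sequences of the fibrations $F \to X \to X/F$ and $G \to Y \to Y/G$ forces $X/F \to Y/G$ to induce isomorphisms on all homotopy groups at every basepoint, hence to be a weak equivalence. To make this rigorous one should first reduce to connected components: the map $\pi_0(X/F) \to \pi_0(Y/G)$ is a bijection because $\pi_0(X) \to \pi_0(Y)$ is (as $X \to Y$ is a weak equivalence) and $\pi_0$ of the quotient is the quotient of $\pi_0$ by the $\pi_0$-action of the group, which is matched up by the compatible weak equivalence $F \to G$. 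Having fixed a basepoint in each component, one runs the five lemma on
\[
\begin{tikzcd}[column sep=small]
\pi_{n+1}(X/F) \arrow{r}\arrow{d} & \pi_n(F) \arrow{r}\arrow{d} & \pi_n(X) \arrow{r}\arrow{d} & \pi_n(X/F) \arrow{r}\arrow{d} & \pi_{n-1}(F) \arrow{d} \\
\pi_{n+1}(Y/G) \arrow{r} & \pi_n(G) \arrow{r} & \pi_n(Y) \arrow{r} & \pi_n(Y/G) \arrow{r} & \pi_{n-1}(G)
\end{tikzcd}
\]
where the second, third and fifth vertical maps are isomorphisms by hypothesis — though one must take care with low degrees ($n=0,1$), where the sequence involves pointed sets and group actions rather than abelian groups, and argue by a direct diagram chase there instead of invoking the five lemma verbatim.

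The main obstacle I anticipate is justifying that the quotient maps are genuinely Kan fibrations and correctly identifying their fibres. The cleanest route is to cite that a free action of a simplicial group $G$ on $Y$ gives a principal $G$-bundle $Y \to Y/G$ (this is classical — see e.g. May's \emph{Simplicial objects in algebraic topology}, or Goerss--Jardine), and that principal bundles are Kan fibrations with fibre $G$; then no explicit lifting argument is needed. A secondary subtlety is basepoint compatibility: I would choose a vertex $x_0 \in X_0$, let $y_0$ be its image, and use the compatible weak equivalence on the groups together with freeness to identify the fibre over $[x_0]$ with $F$ and the fibre over $[y_0]$ with $G$ so that the map of fibres is exactly $F \to G$. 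Once these two points are in place, the homotopy long exact sequence comparison and the five lemma finish the argument, with only the routine low-degree bookkeeping remaining.
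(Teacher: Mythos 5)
Your proposal is correct and follows essentially the same route as the paper: identify the quotient maps as principal fibrations (using freeness), compare the resulting long exact sequences of homotopy groups via the five lemma for $n \geq 1$, and handle $\pi_0$ separately by a direct argument. The only cosmetic difference is in the $\pi_0$ step, where you identify $\pi_0(Y/G)$ with $\pi_0(Y)/\pi_0(G)$ directly, whereas the paper phrases this as a diagram chase; both are equivalent bookkeeping.
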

    
    \begin{proof}
    Since the actions $F \times X \to X$ and $G \times Y \to Y$ are free, the quotient maps $X \to X/F$ and $Y \to Y/G$ are principal fibrations. Moreover, as $X$ and $Y$ are fibrant, these fibrations each give rise to a long exact sequence of homotopy groups, with a chain map between them:
    \[
        \begin{tikzcd}[arrows=to]
            \cdots \arrow[r] & \pi_{n+1}(X/F) \arrow[r] \arrow[d] & \pi_{n}(F) \arrow[r] \arrow[d] & \pi_{n}(X) \arrow[r] \arrow[d] & \pi_{n}(X/F) \arrow[r] \arrow[d] & \cdots \\
            \cdots \arrow[r] & \pi_{n+1}(Y/G) \arrow[r] & \pi_{n}(G) \arrow[r] & \pi_{n}(Y) \arrow[r] & \pi_{n}(Y/G) \arrow[r]  & \cdots
        \end{tikzcd} 
    \] 
    By assumption, the maps $\pi_n(F) \to \pi_n(G)$ and $\pi_n(X) \to \pi_n(Y)$ are isomorphisms for all $n \geq 1$ and bijections at $n=0$. The five-lemma (rather, its non-abelian variant, with the same diagram chase as the classical) now implies that $\pi_{n}(X/F) \to \pi_{n}(Y/G)$ is an isomorphism for all $n \geq 1$. That $\pi_{0}(X/F) \to \pi_{0}(Y/G)$ is an epimorphism immediately follows from the fact that $\pi_0(X) \to \pi_0(Y)$ is. Finally, verifying that $\pi_{0}(X/F) \to \pi_{0}(Y/G)$ is a monomorphism is done through a simple diagram chase argument. (We omit the details, which are routine.) 
    \end{proof}
    
    With this, we can now proceed to the main result of this section. 
    \begin{theorem} \label{main prop}
        Let $(S_1, \square_1)$ and $(S_2, \square_2)$ be good simplicial symmetric monoidal categories. Let $F: (S_1, \square_1) \to (S_2, \square_2)$ be a monoidal functor which is also a weak equivalence. That is, $F$ satisfies the following:
        \begin{enumerate}
            \item \label{strict} for all $a, b \in \mathrm{ob}(S_1)$, $F(a \square_1 b) = F(a) \square_2 F(b)$,
            \item \label{hom set} each induced map of hom-sets $S_1(a, b) \to S_2(F(a), F(b))$ is a weak equivalence of simplicial sets, and
            \item \label{ess sur}for all $c \in \mathrm{ob}(S_2)$, there exists $d \in \mathrm{ob}(S_1)$ such that $F(d)$ and $c$ are isomorphic in the category $\pi_0 S_2$ of components of $S_2$.
        \end{enumerate}
    Then, the induced simplicial symmetric monoidal functor $\tilde{F}: (S_1^{-1}S_1, \tilde{\square}_1) \to (S_2^{-1}S_2, \tilde{\square}_2)$ satisfies the corresponding versions of conditions above -- which we call (\ref{strict}*), (\ref{hom set}*), and (\ref{ess sur}*) -- making it a weak equivalence of simplicial symmetric monoidal categories.
    \end{theorem}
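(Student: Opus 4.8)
The plan is to verify the three starred conditions for $\tilde F$ in order, with condition (\ref{hom set}*) as the centrepiece, since it is where Lemma \ref{prep l} does its work. Throughout I will use the assumption that $S_1$ and $S_2$ are good, so that every morphism $n$-simplex of $S_i^{-1}S_i$ has a representative triple $(s,f,g)$ determined uniquely up to unique isomorphism; this rigidity is what lets me replace the quotient sets $S^{-1}S((x,y),(z,w))_n = \{(s,f,g)\}/\!\sim_n$ by honest orbit spaces of a group action, bringing Lemma \ref{prep l} into play.

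\emph{Condition (\ref{strict}*).} This is immediate and purely formal: by definition $\tilde F(x,y) = (F(x),F(y))$ and $\tilde F[s,f,g] = [F(s),F(f),F(g)]$, and since $F$ is strictly monoidal (condition (\ref{strict})), we get $\tilde F((x_1,x_2)\,\tilde\square_1\,(y_1,y_2)) = \tilde F(x_1\square_1 y_1, x_2\square_1 y_2) = (F(x_1)\square_2 F(y_1), F(x_2)\square_2 F(y_2)) = \tilde F(x_1,x_2)\,\tilde\square_2\,\tilde F(y_1,y_2)$, and likewise on morphisms. One also checks $\tilde F$ is a well-defined simplicial functor, i.e.\ that it respects $\sim_n$ and the face/degeneracy maps, which follows since $F$ does.

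\emph{Condition (\ref{hom set}*).} Fix objects $(x,y),(z,w)$ of $S_1^{-1}S_1$. I want to exhibit $S_1^{-1}S_1((x,y),(z,w))$ as a quotient $X/F$ and $S_2^{-1}S_2((\tilde x),(\tilde z))$ as $Y/G$ in such a way that $\tilde F$ induces the quotient map, then apply Lemma \ref{prep l}. The natural choice is: let $X$ be the simplicial set whose $n$-simplices are triples $(s,f,g)$ with $s\in\mathrm{ob}(S_1)$, $f\in S_1(s\square_1 x,z)_n$, $g\in S_1(s\square_1 y,w)_n$ — i.e.\ the coproduct over $s$ of $S_1(s\square_1 x,z)\times S_1(s\square_1 y,w)$ — and let the simplicial group $F$ (abusing notation; call it $\mathcal{A}ut(s)$ assembled over $s$, or better, restrict first to a fixed $s$ and then take the coproduct over isomorphism classes) act by precomposition: $\alpha\cdot(s,f,g) = (s', f\circ(\alpha^{-1}\square 1_x), g\circ(\alpha^{-1}\square 1_y))$. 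Goodness (faithfulness of translations) guarantees this action is \emph{free}, and the orbit simplicial set is exactly $S_1^{-1}S_1((x,y),(z,w))$. The subtlety here — and the step I expect to be the main obstacle — is organising this coproduct-over-objects picture so that a single simplicial group acts: one cleanly handles this by fixing a set of representatives for isomorphism classes of objects, or by passing to the translation groupoid and observing that the homotopy quotient computes the same thing; I would spell out that, componentwise in $s$, $X$ restricted to the $s$-summand is $S_1(s\square_1 x, z)\times S_1(s\square_1 y, w)$, on which $\mathrm{Aut}_{S_1}(s)$ acts freely, and the full orbit set is the disjoint union of these orbit sets over iso-classes of $s$. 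Then the maps $X\to Y$ (via $F$ on each summand) and $\mathrm{Aut}_{S_1}(s)\to\mathrm{Aut}_{S_2}(F(s))$ are weak equivalences by condition (\ref{hom set}), the square commutes by functoriality of $F$, fibrancy can be arranged by the standard $\mathrm{Ex}^\infty$ replacement (or one works with Kan replacements from the outset, noting $S^{-1}S$ of a fibrant-enriched category is fibrant-enriched), and essential surjectivity of $F$ on components shows every $s$-summand of $Y$ is hit up to the action — so Lemma \ref{prep l} applies summand-by-summand and assembles to give that $S_1^{-1}S_1((x,y),(z,w))\to S_2^{-1}S_2(\tilde F(x,y),\tilde F(z,w))$ is a weak equivalence.

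\emph{Condition (\ref{ess sur}*).} An object $(z,w)$ of $S_2^{-1}S_2$ consists of two objects of $S_2$; by condition (\ref{ess sur}) choose $x,y\in\mathrm{ob}(S_1)$ with $F(x)\cong z$ and $F(y)\cong w$ in $\pi_0 S_2$. Then $\tilde F(x,y)=(F(x),F(y))$ is isomorphic to $(z,w)$ in $\pi_0(S_2^{-1}S_2)$ — the componentwise isomorphisms $F(x)\to z$, $F(y)\to w$ in $\pi_0 S_2$ assemble (with $s = e$ the unit) into a morphism $[e, \cdot, \cdot]$ of $\pi_0(S_2^{-1}S_2)$ which is invertible since its components are. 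This also shows $\pi_0 \tilde F$ is essentially surjective; fullness and faithfulness of $\pi_0\tilde F$ on hom-sets follow from condition (\ref{hom set}*) by applying $\pi_0$, so $\pi_0\tilde F$ is an equivalence. Combining the three parts, $\tilde F$ is a weak equivalence of simplicial symmetric monoidal categories, as claimed. $\qed$
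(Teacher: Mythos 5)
Your proposal is correct and follows essentially the same strategy as the paper: reduce to a skeletal subcategory so the hom-spaces of $S_i^{-1}S_i$ decompose as coproducts, over objects $s$ of the skeleton, of quotients of $S_i\times S_i((s\square a, s\square b),(c,d))$ by the free action of $\mathrm{Aut}_{S_i}(s)$ (free by faithfulness of translations), apply Lemma \ref{prep l} summand-by-summand, and assemble using closure of weak equivalences of simplicial sets under coproduct; conditions (1*) and (3*) are handled exactly as in the paper. One small refinement: the $\mathrm{Ex}^\infty$ hedge for fibrancy is unnecessary, since goodness forces every morphism of $S_i$ to be invertible, so each hom-simplicial-set is either empty or a torsor over a simplicial group and hence a Kan complex, and products of Kan complexes are Kan, so the hypotheses of Lemma \ref{prep l} hold automatically.
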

    \begin{remark} Note that, while the third condition in Definition \ref{simp dwyer kan} requires the induced functor $\pi_0F: \pi_0S_1 \to \pi_0S_2$ to be an equivalence of categories, we already get fullness and faithfulness from $S_1(a, b) \to S_2(F(a), F(b))$ being a bijection at the level of path components; so we only need essential surjectivity in (\ref{ess sur}*).
    \end{remark}
    \begin{proof}
        In this proof, we assume that $F$ is a strict monoidal functor; that is, $F(a \square_1 b) = F(a) \square_2 F(b)$ for all objects $a,b \in \mathrm{ob}(S_1)$. This is purely for readability and conciseness of the arguments -- there is no deeper mathematical reason for it. Moreover, in our applications we only work with strict monoidal functors. The proof can be extended to (strong) monoidal functors with some technical adjustments, and without additional conceptual difficulty.
    
        The proofs of (\ref{strict}*) and (\ref{ess sur}*) are straightforward. For (\ref{strict}*), observe simply that for $(a, b), (c, d) \in \mathrm{ob}(S_1^{-1}S_1)$,
    \begin{align*}
        \tilde{F}((a,b)\tilde{\square}_1(c, d)) 
        &= (F(a \square_1 c), F(b \square_1 d)) \\
        &= (F(a)\square_2F(c), F(b)\square_2F(d)) \\
        &= \tilde{F}(a, b) \tilde{\square}_2 \tilde{F}(c, d).
    \end{align*}
    For (\ref{ess sur}*), consider  an object $(c, d)$ in  $S_2^{-1}S_2$. We seek an object $(a,b)$ in $S_1^{-1}S_1$ such that $\tilde{F}(a, b)$ is isomorphic to $(c,d)$ in the category $\pi_0\,S_2^{-1}S_2$ of components of $S_2^{-1}S_2$. By assumption, there exist isomorphisms $F(a) \to c$ and $F(b) \to d$ in $S_2$ (note: not just in the category of components, as $S_2$ is assumed to be good), which induce a map in $S_2^{-1}S_2$ given by the equivalence class of $(F(a), F(b)) \to (c, d)$; and, by its very construction, this map satisfies (\ref{ess sur}*).
    
    Finally, we address (\ref{hom set}*). For this, we reformulate the definition of the morphisms of the simplicial category $S^{-1}S$ in a way that simplifies the following arguments. Replace $S_1$ by a full skeletal subcategory $\tilde{S}_1$ that contains only one object in each isomorphism class; and replace $S_2$ by the full subcategory $\tilde{S}_2$ where the objects are given by the image under $F$ of the objects of $\tilde{S}_1$. (We may do this because the categories are good.) We make two observations: first, by assumption (\ref{ess sur}), every isomorphism class of $S_2$ is represented by an object in $\tilde{S}_1$; second, by the $\pi_0$-mono case of assumption (\ref{hom set}), this representative is unique. Therefore, as $F|_{\tilde{S}_1}$ is a bijection on objects, we may relabel the objects of $\tilde{S}_2$ so that $\mathrm{ob}(\tilde{S}_1) = \mathrm{ob}(\tilde{S}_2)$. Since $F$ is strictly symmetric monoidal,  we have the following commutative diagram of symmetric monoidal categories, 
    \[\begin{tikzcd}
    (\tilde{S}_1, \tilde{\square}_1) \arrow{r}{F|_{\tilde{S}_1}} \arrow[swap]{d}{\iota_1} & (\tilde{S}_2, \tilde{\square}_2) \arrow{d}{\iota_2} \\
    (S_1, \square_1) \arrow{r}{F} & (S_2, \square_2)
    \end{tikzcd},
    \]
    so we abuse notation for convenience and simply write $(S_1,\square_1)$ and $(S_2, \square_2)$ to denote the skeletal subcategories $(\tilde{S}_1, \tilde{\square}_1)$ and $(\tilde{S}_2, \tilde{\square}_2)$.  For objects $(a, b), (c, d)$ in $S_i \times S_i$ ($i = 1$ or $2$), we may now describe $S_i^{-1}S_i((a, b), (c, d))$ as the disjoint union
    \[\coprod_{x \in \mathrm{ob}(S_i)}\faktor{S_i \times S_i((x \square a, x \square b), (c, d))}{\mathrm{Aut}_{S_i}(x)}, 
        \]
    where  the action of a group element $ \varphi \in \mathrm{Aut}_{S_i}(x)$ on $(f, g) \in S_i \times S_i((x \square a, x \square b), (c, d))$ is given by $(f, g) \circ (\varphi \square 1_{(a, b)})$. Observe now that this action is free. Indeed, if $(f, g) \circ (\varphi \square 1_{(a, b)}) = (f, g) \circ (\psi \square 1_{(a, b)})$ then $\varphi \square 1_{(a, b)} = \psi \square 1_{(a, b)}$ as $S_i \times S_i$ is a groupoid; which in turn implies, as the translation functor is faithful (from $S_i$ being good), that $\varphi = \psi$.
    
    From Lemma \ref{prep l}, we get a weak homotopy equivalence of simplicial sets, \[ \faktor{S_1 \times S_1((x \square a, x \square b), (c, d))}{\mathrm{Aut}_{S_1}(x)} \, \rightarrow  \,\faktor{S_2 \times S_2((x \square a, x \square b), (c, d))}{\mathrm{Aut}_{S_2}(x)}.\]
    The final piece to this proof comes down to observing, using Ken Brown's lemma (see \cite{hovey2007model}) and the fact that every object in the model category of simplicial sets is cofibrant, that the class of weak equivalences of simplicial sets is closed under coproduct. In particular, the map $S_1^{-1}S_1((a, b), (c, d)) \to S_2^{-1}S_2((a, b), (c, d))$ is a weak equivalence.
    \end{proof}
    
    An important application of this result follows. Recall that the nerve $NC$ of a category $C$ is a simplicial set with $n$-simplices given by sequences $X_n \to X_{n-1} \to \cdots \to X_1 \to X_0$ of composable arrows in $C$. Now, if $C$ is a small category enriched in simplical sets, we obtain a bisimplicial set by applying the nerve functor to each simplicial level. In this case, by a widely accepted (slight) abuse of notation, we call the diagonal of this bisimplical set the nerve of $C$, and denote it by $NC$. 
    It is well known (see, for example, \cite[Section IV.3]{weibel2013k}) that if $f: X \to Y$ is a map of bisimplicial sets such that the induced simplicial map $X_{i, *} \to Y_{i, *}$ is a weak equivalence for all $i \geq 0$, then $\mathrm{diag}(X) \to \mathrm{diag}(Y)$ is also a weak equivalence. From this, we easily deduce the following:

    \begin{lemma} \label{lem: induced on nerves}
        If $f: C \to D$ is a weak equivalence of small simplicial categories, then $Nf: NC \to ND$ is a weak equivalence of simplicial sets. 
    \end{lemma}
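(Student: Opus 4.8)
The plan is to reduce the statement to the fact about bisimplicial sets quoted just before the lemma: if a map of bisimplicial sets restricts to a weak equivalence in each fixed direction, then the induced map on diagonals is a weak equivalence. So the real content is to check that a Dwyer–Kan equivalence $f\colon C \to D$ of small simplicial categories induces, for each simplicial level $i$, a weak equivalence of nerves $N(C_i) \to N(D_i)$ of ordinary categories, where $C_i$ (resp.\ $D_i$) is the ordinary category at the $i$-th simplicial level. Granting that, the quoted fact gives that $\mathrm{diag}(NC) \to \mathrm{diag}(ND)$ is a weak equivalence, and by our (accepted abuse of) notation this diagonal is exactly $NC \to ND$.

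First I would fix $i \geq 0$ and consider the functor $f_i\colon C_i \to D_i$ of ordinary categories. The hypothesis that $f$ is a weak equivalence of simplicial categories gives two things at level $i$: each map $C(a,b)_i \to D(f a, f b)_i$ is a map of sets that, as $i$ varies, assembles into a weak equivalence $C(a,b) \to D(f a, f b)$ of simplicial sets; and the induced functor $\pi_0 C \to \pi_0 D$ on homotopy categories is an equivalence. In particular, since $\pi_0 C(a,b) = \pi_0\big(C(a,b)\big)$ and the map of simplicial sets $C(a,b) \to D(fa,fb)$ is a weak equivalence, it is in particular a bijection on $\pi_0$, so $f$ is full and faithful on homotopy categories, and essentially surjective by the third Dwyer–Kan condition. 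The standard fact I would invoke is that a functor of ordinary categories which is an equivalence induces a homotopy equivalence of nerves; but $f_i$ need not be an equivalence of categories for a fixed $i$. So instead I would pass through $\pi_0$: the quotient functors $C_i \to \pi_0 C$ and $D_i \to \pi_0 D$ need not be equivalences either. The cleanest route is to observe that $f$ being a weak equivalence of simplicial categories means, by definition of Bergner's model structure, that $f$ is a weak equivalence; and the nerve (diagonal) functor is known to send Dwyer–Kan equivalences to weak equivalences of simplicial sets — this is essentially because $\mathrm{diag}\,N$ factors (up to natural weak equivalence) through the homotopy-coherent nerve / the classifying space functor, which inverts Dwyer–Kan equivalences. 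Rather than develop that machinery, I would give the direct bisimplicial argument: for fixed $i$, apply the quoted bisimplicial principle \emph{in the other direction} is circular, so instead use that $N(C_i) \to N(D_i)$ is a weak equivalence because...

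Here is the cleaner version of the argument, which is the one I would actually write. Apply the quoted bisimplicial fact to the bisimplicial set $(i,j) \mapsto N(C_i)_j$, i.e.\ to $NC$ regarded with $i$ the ``enrichment'' coordinate and $j$ the nerve coordinate. To use it we must check that for each fixed $j$, the simplicial map $i \mapsto N(C_i)_j \to N(D_i)_j$ is a weak equivalence. But $N(C_i)_j$ is the set of composable $j$-chains of morphisms in $C_i$, and as $i$ varies this is the simplicial set whose components are indexed by chains $(a_0,\dots,a_j)$ of objects with fibre $\prod_{k=1}^{j} C(a_{k-1},a_k)$ — no wait, composability forces a single varying chain, so it is $\coprod_{(a_0,\dots,a_j)} C(a_0,a_1)\square\cdots$; more precisely $\big(N C\big)_{*,j} \cong \coprod_{(a_0,\dots,a_j) \in \mathrm{ob}(C)^{j+1}} C(a_0,a_1)\times_{?}\cdots$, and for $j=0$ it is just the constant simplicial set on $\mathrm{ob}(C)$. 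This is getting delicate precisely at the object-set level, which is the main obstacle: $f$ need not be injective or surjective on objects, only essentially surjective on $\pi_0$. To handle this I would first replace $C$ and $D$ by Dwyer–Kan equivalent simplicial categories with the \emph{same} object set and $f$ the identity on objects — this is possible by the same skeletal-replacement trick used in the proof of Theorem~\ref{main prop} (choose skeleta, relabel objects), using that being a weak equivalence of simplicial categories is invariant under such replacement. After that reduction, $(NC)_{*,j}$ and $(ND)_{*,j}$ have the same indexing coproduct, the map between them is a coproduct of maps $\prod_k C(a_{k-1},a_k) \to \prod_k D(a_{k-1},a_k)$, each a finite product of weak equivalences of simplicial sets hence a weak equivalence, and a coproduct of weak equivalences between cofibrant objects is a weak equivalence (Ken Brown's lemma, as already invoked in Theorem~\ref{main prop}). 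Thus $(NC)_{*,j} \to (ND)_{*,j}$ is a weak equivalence for every $j$, and the quoted bisimplicial fact gives that the diagonal $NC \to ND$ is a weak equivalence of simplicial sets. The main obstacle, as indicated, is purely the bookkeeping around objects: once one reduces (legitimately) to the identity-on-objects case, everything else is a routine assembly of the levelwise weak equivalences on hom-simplicial-sets via coproducts and finite products, exactly paralleling the endgame of the proof of Theorem~\ref{main prop}.
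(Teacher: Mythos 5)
Your bisimplicial strategy -- fixing the nerve coordinate $j$, varying the enrichment coordinate $i$, and invoking the quoted levelwise-to-diagonal fact -- is the intended one, and the identification of $(NC)_{*,j}$ as $\coprod_{(a_0,\dots,a_j)} \prod_{k=1}^j C(a_{k-1},a_k)$ is exactly what makes the local weak equivalences on hom-spaces usable. (The paper offers no proof, asserting only that the lemma is ``easily deduced'' from the bisimplicial fact.) You have also correctly located the genuine obstacle: since $f$ need not be injective or surjective on objects, the coproducts on the two sides are indexed differently, and already at $j=0$, where $(NC)_{*,0}$ is the constant simplicial set on $\mathrm{ob}(C)$, the levelwise map fails to be a weak equivalence without some normalisation of object sets.

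The gap is in your fix. To pass to full $\pi_0$-skeletal subcategories $\tilde C \hookrightarrow C$ and $\tilde D \hookrightarrow D$ and then deduce the general case from the reduced one, you must already know that these inclusions induce weak equivalences on diagonal nerves -- which is a special case of the very lemma you are proving, so the argument is circular as stated. You appeal to the skeletal replacement in the proof of Theorem~\ref{main prop}, but there the replacement serves only to compute hom-simplicial-sets in $S^{-1}S$, not to control nerve homotopy types, and the paper explicitly invokes the goodness (groupoid) hypothesis to justify it. The circularity can be broken precisely when $C$ and $D$ are simplicial groupoids: then for each level $i$ the inclusion $\tilde C_i \hookrightarrow C_i$ of ordinary categories is full, faithful, and essentially surjective (a $\pi_0$-isomorphism has a representative in $C_0$, which is automatically invertible, and its degeneracies give isomorphisms in every $C_i$), so $N(\tilde C_i) \to N(C_i)$ is a homotopy equivalence, and the quoted bisimplicial fact applied in the \emph{other} direction (fixing $i$, varying $j$) yields that $N\tilde C \to NC$ is a weak equivalence. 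This covers every application the paper makes of the lemma (always to $S^{-1}S$ of good categories), but it is not what you wrote: you should either insert this argument, or restate the lemma for simplicial groupoids, or supply a different justification of the skeletal reduction in the general case. Separately, the proposal's several false starts (``no wait'', ``is circular, so instead use that\dots because\dots'') must be removed before it reads as a proof.
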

    
    From Theorem \ref{main prop} and Lemma \ref{lem: induced on nerves}, we put together the following result:
    \begin{corollary} \label{cor: simp symm same K}
       Let $(S_1, \square_1)$ and $(S_2, \square_2)$ be good simplicial symmetric monoidal categories. If $F: (S_1, \square_1) \to (S_2, \square_2)$ is a weak equivalence, then the induced map between the nerves of their group completions, $NF: NS_1^{-1}S_1 \to NS_2^{-1}S_2$, is a weak equivalence of simplicial sets.
    \end{corollary}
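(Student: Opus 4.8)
The plan is to deduce the corollary immediately by composing the two results that precede it, checking only two minor bookkeeping points. First I would invoke Theorem \ref{main prop}: since $F\colon (S_1,\square_1) \to (S_2,\square_2)$ is a weak equivalence of good simplicial symmetric monoidal categories, the induced simplicial symmetric monoidal functor $\tilde{F} = \Lambda(F)\colon (S_1^{-1}S_1,\tilde{\square}_1) \to (S_2^{-1}S_2,\tilde{\square}_2)$ — which exists and is functorial by the construction preceding Proposition \ref{prop: Lambda} — satisfies conditions (\ref{strict}*), (\ref{hom set}*) and (\ref{ess sur}*), and is therefore a weak equivalence of simplicial symmetric monoidal categories.

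Next I would observe that $S_1^{-1}S_1$ and $S_2^{-1}S_2$ are \emph{small} simplicial categories: their object sets are $\mathrm{ob}(S_i)\times\mathrm{ob}(S_i)$, hence sets since the $S_i$ are small, and their hom-objects are simplicial sets by construction. Moreover, a weak equivalence of simplicial symmetric monoidal categories in the sense of Definition \ref{simp dwyer kan} is in particular a Dwyer--Kan equivalence of the underlying simplicial categories: condition (2) there is precisely the homotopically-fully-faithful requirement and condition (3) gives essential surjectivity on components. Forgetting the monoidal structure, $\tilde{F}$ is thus a weak equivalence of small simplicial categories in exactly the sense required by Lemma \ref{lem: induced on nerves}.

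Finally, Lemma \ref{lem: induced on nerves} applies directly and yields that $N\tilde{F}\colon NS_1^{-1}S_1 \to NS_2^{-1}S_2$ is a weak equivalence of simplicial sets — here, as in the lemma, $N$ denotes the diagonal of the bisimplicial set obtained by applying the categorical nerve in each simplicial degree, and $NF$ in the statement of the corollary is understood as this $N\tilde{F}$ (the nerve of the induced functor on group completions). This completes the argument.

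Since every substantive step has already been carried out — the homotopy long-exact-sequence and non-abelian five-lemma analysis lives inside Lemma \ref{prep l} and Theorem \ref{main prop}, and the bisimplicial realisation input is quoted inside Lemma \ref{lem: induced on nerves} — there is no genuine obstacle here; the corollary is formal. The only point demanding any attention is the translation between ``weak equivalence of simplicial symmetric monoidal categories'' (Definition \ref{simp dwyer kan}) and ``weak equivalence of small simplicial categories'' in the Bergner/Dwyer--Kan sense needed to feed $\tilde{F}$ into Lemma \ref{lem: induced on nerves}, and this is no more than unwinding the two definitions.
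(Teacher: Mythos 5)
Your proposal is correct and follows exactly the route the paper intends: apply Theorem \ref{main prop} to conclude that $\tilde F = \Lambda(F)$ is a weak equivalence of (good) simplicial symmetric monoidal categories, then forget the monoidal structure and feed the underlying Dwyer--Kan equivalence of small simplicial categories into Lemma \ref{lem: induced on nerves}. The paper simply states the corollary as a direct consequence of those two results without spelling out the translation between Definition \ref{simp dwyer kan} and the Bergner/Dwyer--Kan notion; your unwinding of that point is accurate bookkeeping, not a new argument.
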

    
    This corollary is particularly relevant in the context of the following definition for the K-theory for simplicial symmetric monoidal categories, leading directly to the main pursuit of the paper. 
    
    \begin{definition} \label{k theory def}
        Let $(S, \square)$ be a good simplicial symmetric monoidal category. The K-groups of $S$ are the homotopy groups of the geometric realisation of the nerve of $S^{-1}S$, that is,
        \[ K_n(S) = \pi_n(\lvert NS^{-1}S \rvert), \quad n \geq 0.
         \] We call the topological space $\lvert NS^{-1}S \rvert$ the \textit{K-theory space} of $S$, and denote it by $K(S)$. 
     \end{definition}
    
    \begin{proposition} \label{prop: funct 2}
        There is a functor $K: \mathrm{SymMonGpd^\dagger_{\Delta}} \to \mathrm{Top}$ given by $S \mapsto K(S)$ on objects, and $\phi \mapsto |N(\Lambda(\phi))|$ on morphisms (see Proposition \ref{prop: Lambda}).
    \end{proposition}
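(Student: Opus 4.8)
The plan is to realise $K$ as a composite of functors that have either already been constructed or are entirely standard. By Proposition \ref{prop: Lambda} we have the functor $\Lambda: \mathrm{SymMonGpd}_\Delta^\dagger \to \mathrm{SymMonCat}_\Delta$, with $\Lambda(S,\square) = (S^{-1}S, \tilde{\square})$ on objects and $\Lambda(\phi)$ as described there on morphisms. Composing with the forgetful functor $U: \mathrm{SymMonCat}_\Delta \to \mathrm{Cat}_\Delta$ to the category of small simplicial categories (which merely discards the monoidal data, and is manifestly functorial) gives a functor sending $S$ to the underlying simplicial category of $S^{-1}S$ and $\phi$ to $\Lambda(\phi)$.

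Next, I would recall that the diagonal nerve $N: \mathrm{Cat}_\Delta \to \mathrm{sSet}$ -- defined, as in the discussion preceding Lemma \ref{lem: induced on nerves}, by applying the ordinary nerve in each simplicial degree and then taking the diagonal of the resulting bisimplicial set -- is a functor. Indeed, a small simplicial category is a simplicial object in $\mathrm{Cat}$ whose structure maps are the identity on objects, and an sSet-enriched functor is exactly a morphism of such simplicial objects; since the ordinary nerve $\mathrm{Cat} \to \mathrm{sSet}$ is a functor, applying it degreewise yields a functor from $\mathrm{Cat}_\Delta$ to bisimplicial sets, and postcomposition with the (functorial) diagonal gives $N$. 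Finally, geometric realisation $|\cdot|: \mathrm{sSet} \to \mathrm{Top}$ is a functor, being a left adjoint of the singular functor.

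Assembling these, the composite
\[
\mathrm{SymMonGpd}_\Delta^\dagger \xrightarrow{\;\Lambda\;} \mathrm{SymMonCat}_\Delta \xrightarrow{\;U\;} \mathrm{Cat}_\Delta \xrightarrow{\;N\;} \mathrm{sSet} \xrightarrow{\;|\cdot|\;} \mathrm{Top}
\]
is a functor. Unwinding the definitions shows that it sends $(S,\square)$ to $|N(S^{-1}S)| = K(S)$ and a morphism $\phi$ to $|N(\Lambda(\phi))|$, which is precisely the asserted description; preservation of identities and of composition is inherited factor by factor. There is no genuine obstacle here: the only point requiring a moment's thought is the functoriality of the diagonal nerve on $\mathrm{Cat}_\Delta$, which is immediate from the viewpoint above. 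Alternatively, one may bypass the factorisation and verify directly that $K(\mathrm{id}_S) = \mathrm{id}_{K(S)}$ and $K(\psi \circ \phi) = K(\psi) \circ K(\phi)$, each of which reduces at once to the corresponding identity for $\Lambda$, for $N$, and for $|\cdot|$.
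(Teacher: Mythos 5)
Your proposal is correct and follows essentially the same route as the paper: both proofs realise $K$ as the composite of $\Lambda$, the diagonal nerve, and geometric realisation. The only difference is that you explicitly insert the forgetful functor $U : \mathrm{SymMonCat}_\Delta \to \mathrm{Cat}_\Delta$ before the nerve and justify the functoriality of the diagonal nerve, whereas the paper elides this step; this is a minor clarification rather than a genuine departure.
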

    
    \begin{proof}
        The map above is a composition of the three maps: 
        
        \[ \mathrm{SymMonGpd^\dagger_{\Delta}} \xrightarrow{\Lambda} \mathrm{SymMonCat_{\Delta}} \xrightarrow{N} \mathrm{sSet} \xrightarrow{|\,\cdot\,|} \mathrm{Top} \]
        
        where
        \begin{itemize}
            \item $\Lambda$ is as defined in Proposition \ref{prop: Lambda},
            \item $N$ is the nerve functor, and 
            \item $|\,\cdot\,|$ is the geometric realisation functor.\qedhere
        \end{itemize} 
    \end{proof}
    
    The definition we give above for the K-theory of a simplicial (or topological) symmetric monoidal category, which naturally generalises Quillen's original construction for discrete symmetric monoidal categories \cite{grayson2006higher}, is both sound and well founded: the space $|NS^{-1}S|$ is an infinite loop space, and infinite loop space machines accept topological or simplicial inputs \cite[pp.~331, 338]{weibel2013k}. The topologically enriched $S^{-1}S$ construction is explored from other angles in M.~Paluch's work \cite{paluch1991algebraic,paluch1996topology}.  
    
    Corollary \ref{cor: simp symm same K} now implies that a weak equivalence between good simplicial symmetric monoidal categories induces an isomorphism of topological K-theories, per Definition \ref{k theory def}. This is a pivotal step in achieving our main goal; we state it expressly as a theorem:
    
    \begin{theorem} \label{thm: K-theory equiv} A weak equivalence $F: (S_1, \square_1) \to (S_2, \square_2)$ of good simplicial symmetric monoidal categories induces an isomorphism of K-groups, $K_n(S_1) \to K_n(S_2)$, for all $n \geq 0$. 
    \end{theorem}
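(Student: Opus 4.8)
The plan is to assemble the machinery already in place; the statement is the clean corollary that Section~3 has been building towards. Since $F$ is by hypothesis a weak equivalence of good simplicial symmetric monoidal categories, Corollary~\ref{cor: simp symm same K} applies verbatim: the induced map on the nerves of the group completions, $NF\colon NS_1^{-1}S_1 \to NS_2^{-1}S_2$, is a weak equivalence of simplicial sets. (Recall that this corollary is itself the composite of Theorem~\ref{main prop}, which upgrades $F$ to a weak equivalence $\tilde F\colon S_1^{-1}S_1 \to S_2^{-1}S_2$ of simplicial symmetric monoidal categories, and Lemma~\ref{lem: induced on nerves}, which passes weak equivalences of simplicial categories to their nerves via the diagonal of the associated bisimplicial set.)

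First I would invoke the standard fact that geometric realisation sends weak equivalences of simplicial sets to weak homotopy equivalences of topological spaces (part of the Quillen equivalence between $\mathrm{sSet}$ and $\mathrm{Top}$). Applying $|\,\cdot\,|$ to $NF$ thus produces a weak homotopy equivalence
\[ |NF|\colon K(S_1) = |NS_1^{-1}S_1| \longrightarrow |NS_2^{-1}S_2| = K(S_2), \]
where the two outer equalities are Definition~\ref{k theory def}. By definition of a weak homotopy equivalence, $|NF|$ induces isomorphisms on all homotopy groups, so $K_n(S_1) = \pi_n|NS_1^{-1}S_1| \to \pi_n|NS_2^{-1}S_2| = K_n(S_2)$ is an isomorphism for every $n \geq 0$. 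That this isomorphism is genuinely induced by $F$ — i.e. that it is the value of a functor on $F$ — is Proposition~\ref{prop: funct 2}.

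There is essentially no obstacle remaining at this point: all of the genuine difficulty was discharged earlier. The crux is the proof of Theorem~\ref{main prop}, and within it condition~(\ref{hom set}*), where one must recognise the hom-simplicial-sets of $S^{-1}S$ (after passing to skeleta) as quotients of mapping spaces by \emph{free} actions of automorphism groups, apply Lemma~\ref{prep l} to each summand, and use closure of weak equivalences of simplicial sets under coproducts. The present theorem merely records the payoff in the form needed for Section~4.
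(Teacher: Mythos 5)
Your argument is exactly the one the paper intends: apply Corollary~\ref{cor: simp symm same K} to get a weak equivalence $NF$ of simplicial sets, pass through geometric realisation to obtain a weak homotopy equivalence of K-theory spaces, and read off isomorphisms on all $\pi_n$, with functoriality supplied by Proposition~\ref{prop: funct 2}. This matches the paper's own (implicit) proof, which records the theorem as an immediate consequence of that corollary and Definition~\ref{k theory def}.
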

    
    \begin{remark}
        For topological symmetric monoidal groupoids where translations are faithful, we define the higher topological K-theory by first applying the singular functor to the morphism spaces. This converts the topological groupoid into a good simplicial symmetric monoidal category, to which the above results apply.
    \end{remark}
    
    In the following section, we define the simplicial, symmetric monoidal categories of holomorphic and topological $\alpha$-twisted vector bundles on a reduced Stein space, prove that they are weakly equivalent, and apply Theorem \ref{thm: K-theory equiv} to this weak equivalence, thereby establishing the main result of the paper.

\section{The Oka principle in higher twisted K-theory}

Twisted K-theory, originally introduced in the work of Donovan and Karoubi \cite{donovan1970graded}, later reformulated geometrically by Atiyah and Segal \cite{atiyah2004twisted, atiyah2006twisted}, is a sophisticated variant of classical K-theory. For each cohomology class $\alpha \in H^3(X,\mathbb{Z})$ of a topological space $X$, there is an associated $\alpha$-twisted K-group $K^0_\alpha(X)$, which admits several distinct but non-trivially equivalent definitions (cf. \cite{atiyah2004twisted, atiyah2006twisted, bouwknegt2002twisted}). We focus on the case where $\alpha$ is torsion, and adopt a modified version of the definition of $K^0_\alpha(X)$ in \cite{karoubi2012twisted} due to Karoubi, which uses $\alpha$-twisted vector bundles. Using the simplicially enriched $S^{-1}S$ construction studied above, we generalise the definition of $K^0_\alpha(X)$ to higher $\alpha$-twisted K-groups $K^{-n}_\alpha(X)$. While this definition of higher topological $\alpha$-twisted K-theory is both natural and consistent with existing formulations (cf. Section~\ref{sec 5}), it does not appear to have received any prior treatment in the literature. Additionally, we introduce the first definition of $\alpha$-twisted higher holomorphic K-theory, extending beyond the $K^0$ case mentioned in \cite{mathai2003chern}. 

The primary question of this work is the following: is the higher topological $\alpha$-twisted K-theory of a Stein space isomorphic to higher holomorphic $\alpha$-twisted K-theory? We prove in this section that the answer to this question is affirmative: in short, the Oka principle holds in higher, twisted K-theory.  Here is the precise statement:

\begin{centralthm}\label{thm:central}  
    Let $X$ be a reduced Stein space and $\alpha$ be a torsion class in $H^2(X, \cO^*) \simeq H^3(X, \ZZ)$. For all $n \geq 0$, the homomorphism of K-groups induced by inclusion, \[K_\alpha^{-n, \mathrm{\mathcal{O}}}(X) \to K_\alpha^{-n, \mathrm{\mathcal{C}}}(X),\] is an isomorphism. 
    \end{centralthm}

In the above, the terms $K_\alpha^{-n, \mathrm{\mathcal{O}}}(X)$ and $K_\alpha^{-n, \mathrm{\mathcal{C}}}(X)$ denote the K-groups associated with the symmetric monoidal categories of holomorphic and topological $\alpha$-twisted vector bundles respectively. A substantial portion of this section will be devoted to defining these categories, which become inputs for the simplicially enriched version of Quillen's $S^{-1}S$ functor (see Definition \ref{k theory def}).  The main idea of the proof is showing the statement to be a special case of the Theorem \ref{thm: K-theory equiv}, which we achieve by adapting the methods of Cartan and Grauert, and using the parametric Oka principle for generalised principal bundles (Theorem \ref{parametric Cartan}). 


\subsection{The category of $\alpha$-twisted vector bundles}

Let $X$ be a Hausdorff, paracompact topological space. 
Consider a sheaf $\cG$ of topological groups on $X$; for example, $\cG$ might be $GL_n\cC$, the sheaf of invertible $n \times n$ matrices with continuous complex entries. Given an open cover $\mathcal{U} = (U_i)_{i \in I}$ of $X$, we consider a 2-cocycle $(\alpha_{ijk})_{i,j,k \in I}$ taking values in $\cZ(\cG)$, the centre of the sheaf $\cG$. We assume this cocycle is \textit{completely normalised}; in full: 
\begin{enumerate}
    \item $\alpha_{ijk} = e_{\cG}$ (the identity element) whenever one of the indices $i,j,k$ is repeated,
    \item for any permutation $\sigma$ of indices $i, j, k \in I$, $\alpha_{\sigma(i)\sigma(j)\sigma(k)}  = \alpha_{ijk}^{\epsilon(\sigma)}$, where $\epsilon(\sigma)$ is the sign of $\sigma$, and 
    \item $\alpha_{ijl} \cdot \alpha_{jkl} = \alpha_{ikl} \cdot \alpha_{ijk}$ for all $i, j, k, l \in I$.
\end{enumerate}

One can show that every cocycle is cohomologous to a completely normalised one (see, for example, \cite{karoubi1997resolutions}). Working with completely normalised cocycles simplifies our subsequent computations and proofs. We note that in the case where $\cG = GL_n\cC$, $\alpha$ is a completely normalised 2-cocycle with coefficients in $\cC^*$, the sheaf of nonvanishing, continuous, complex-valued functions.

We now proceed to define the topological category of $\alpha$-twisted vector bundles over an open cover $\cU = (U_i)_{i \in I}$. 
 To first put it in the most geometric way possible, over each open set $U_i$, the objects in this category are genuine (untwisted) $GL_n\CC$-principal bundles. These bundles are glued together on the intersections $U_i \cap U_j$ according to an $\alpha$-twisted cocycle condition. Morphisms between objects are defined as sections of the corresponding isomorphism bundle, as defined below. In particular, each morphism in this category is an isomorphism. A key property of this construction is that the category remains equivalent when defined using any refinement $\cV$ of the cover $\cU$ (where $\alpha$ is appropriately restricted). Here is a precise definition. 
\begin{definition} \label{def: cat twisted}Let $X$ be a Hausdorff, paracompact topological space, with an open cover $\cU = (U_i)_{i \in I}$, and let $\alpha$ be a completely normalised 2-cocycle of $\cC^*$ with respect to $\cU$. (As we later show in Proposition \ref{prop: cohom alpha}, this construction depends only on the cohomology class of $\alpha$ in $H^2(X, \cC^*) \simeq H^3(X,\ZZ)$.) We define the topologically enriched category $i\mathrm{Vect}^n_\alpha(\cU, \mathcal{C})$ of topological $\alpha$-twisted vector bundles as follows. An object $E$ -- which we call an $\alpha$-\textit{twisted vector bundle} -- is given by the following data: an open cover $(W_{a^i})_{{a^i} \in I^{(i)}}$ of $U_i$ for each $i \in I$, and a 1-cochain $\left(g_{a^{i}b^{j}}\right)_{a^i\in I^{(i)}, b^j\in I^{(j)}, i,j \in I}$ of the sheaf $GL_n\cC$, with respect to the open cover $\coprod_{i \in I}(W_{a^i})_{a^i \in I^{(i)}}$ of $X$, such that 
\begin{enumerate}
    \item on each $U_i$, $\left(g_{a^{i}b^{i}}\right)_{{a^i},{b^i} \in I^{(i)}}$ is a 1-cocycle with respect to $(W_{a^i})_{a \in I^{(i)}}$, and
    \item on the overlaps $W_{a^i} \cap W_{b^j} \cap W_{c^k}$, the $\alpha$-twisted cocycle condition $g_{a^{i}b^{j}}g_{b^{j}c^{k}} = g_{a^{i}c^{k}} \alpha_{ijk}$ holds.
\end{enumerate}
 We call $(g_{a^{i}b^{j}})_{a^i\in I^{(i)}, b^j\in I^{(j)}, i,j \in I}$ (which, from now on, we write as $(g_{a^{i}b^{j}})$ for visual clarity) an $\alpha$-twisted 1-cocycle with respect to $\cU$. The morphisms in this category are as follows: if objects $E$ and $F$ are defined by $\alpha$-twisted 1-cocycles $\left(g_{a^{i}b^{j}}\right)$ and $\left(f_{a^{i}b^{j}}\right)$ respectively, with respect to the open cover $\coprod_{i \in I}(W_{a^i})_{a^i \in I^{(i)}}$, a morphism from $E$ to $F$ is given by a 0-cochain $(c_{a^{i}})_{a \in I^{(i)}, i\in I}$ of $GL_n\cC$ with respect to the same cover, such that \[c_{a^{i}} = f_{a^{i}b^{j}}c_{b^{j}}g_{b^{j}a^{i}}.\] Note that every morphism is an isomorphism (the inverse of $(c_{a^{i}})$ is, of course, $(c^{-1}_{a^{i}})$).  The category $i\mathrm{Vect}^n_\alpha(\cU, \mathcal{C})$ is enriched in topological spaces, with the morphism space from $E$ to $F$ given by the space $\Gamma(\Iso(E, F))$ of sections -- equipped with the compact-open topology -- of the \textit{isomorphism bundle} $\Iso(E, F)$ from $E$ to $F$. Expressly, the generalised principal bundle $\Iso(E, F)$ can be defined by the trivialising cover $\coprod_{i \in I}(W_{a^i})_{a^i \in I^{(i)}}$ and transition functions $\psi_{a^ib^j}$ given by $h \mapsto f_{a^{i}b^{j}}(h)\cdot h \cdot g_{b^{j}a^{i}}(h)$,  which are set-theoretic automorphisms of $GL_n\CC$. The structure group bundle of $\Iso(E,F)$ is the \textit{automorphism bundle} $\Aut(E) = \Iso(E, E)$ of $E$; this is a $GL_n\CC$-group bundle which acts fibrewise on $\Iso(E, F)$ by right multiplication. Using the fact that $\alpha$ is completely normalised and takes values in $\CC^*$, we may easily show that $\Iso(E, F)$ and $\Aut(E)$ are genuine -- that is, untwisted -- bundles. In other words, if $E$ and $F$ are defined by $\alpha$-twisted cocycles $\left(g_{a^{i}b^{j}}\right)$ and $\left(f_{a^{i}b^{j}}\right)$ respectively; then, 
 \begin{equation*}
    \psi_{a^{i}b^{j}} \, \psi_{b^{j}c^{k}}  = \psi_{a^{i}c^{k}}.
\end{equation*}
\end{definition}

The next stage involves constructing a family of equivalent categories that encompasses all topological $\alpha$-twisted vector bundles on $X$, uniting various characteristics: varying bundle ranks, different choices of covers  and refining maps, and distinct cocycles representing the same cohomology class as $\alpha$. We make a few remarks before proceeding.

\begin{remark} We consider $\left(g_{a^{i}b^{j}}\right)$ as a 1-cochain with respect to the \textit{disjoint} union of the open covers $(W_{a^i})_{a^i \in I^{(i)}}$, as $i$ runs over $I$. In other words, while it may be that the sets $W_{b^j}$ and $W_{a^i}$ are equal for $i \neq j$, where $b^j \in I^{(j)}$ and $a^i \in I^{(i)}$, we consider these open sets as distinct. 
\end{remark}
\begin{remark} Observe that the twisted cocycle condition reduces to the ordinary cocycle condition when $i = j = k$; so condition 1 of Definition \ref{def: cat twisted} is implied by condition 2. In geometric terms, over each $U_i$, the twisted vector bundle $E$ corresponds to an ordinary, untwisted vector bundle (equivalently, an untwisted principal bundle) -- but not necessarily a trivial bundle. 
\end{remark}
\begin{remark} The category we have defined above is a  concrete adaptation of C\u{a}ld\u{a}raru's definition of twisted sheaves of modules \cite{caldararu2000derived} to twisted vector bundles. 
\end{remark}
\begin{remark}    \label{rem: comp with Karoubi}
We must briefly address why we did not opt for a simpler definition of the category of $\alpha$-twisted vector bundles -- as the full subcategory of the category above, in which the objects are trivial vector bundles on each open set $U_i$, which satisfy the $\alpha$-twisted cocycle condition on overlaps. In fact, this happens to be one of the first definitions of $\alpha$-twisted vector bundles to appear in the literature \cite{karoubi2012twisted}. Any definition must account for $\alpha$-twisted vector bundles that can be constructed in the same way on a different open cover $\cU'$, for which we need to consider refinements of covers (cf. Proposition \ref{prop: refine iso} below). Karoubi addresses this issue by appealing to the existence of good open covers and their cofinality. However, translating this approach to the holomorphic setting -- as we wish to do -- presents challenges; of note, we would need to understand the precise meaning of Stein good open covers in this context, and then consider the difficult question of the existence of (sufficiently many) of them. To avoid this, we work with the more general, and somewhat technical, definition above. 
\end{remark}


\begin{proposition} \label{prop: refine iso}
    If $\cV$ is a refinement of $\cU$, then the refinement functor $i\mathrm{Vect}^n_\alpha(\cU, \mathcal{C}) \to i\mathrm{Vect}^n_\alpha(\cV, \mathcal{C})$ is an equivalence of topological groupoids. 
 
\end{proposition}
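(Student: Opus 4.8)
The plan is to exhibit a functor $R: i\mathrm{Vect}^n_\alpha(\cU,\cC) \to i\mathrm{Vect}^n_\alpha(\cV,\cC)$ — the ``refinement functor'' — and show it is fully faithful and essentially surjective as a functor of topologically enriched groupoids (equivalently, after applying $\mathrm{Sing}$, a Dwyer--Kan equivalence). Fix a refining map $\tau: J \to I$ for the covers $\cV = (V_j)_{j \in J}$ and $\cU = (U_i)_{i \in I}$, so $V_j \subseteq U_{\tau(j)}$; the restriction of the $2$-cocycle $\alpha$ along $\tau$ is a completely normalised $2$-cocycle with respect to $\cV$ (after a normalisation adjustment, which changes it only by a coboundary — this is harmless by the forward reference to Proposition \ref{prop: cohom alpha}). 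On objects, given an $\alpha$-twisted bundle $E$ over $\cU$ with defining data $(W_{a^i})$ and $(g_{a^ib^j})$, we set $R(E)$ to be the $\alpha|_\cV$-twisted bundle over $\cV$ whose open cover refining $V_j$ is $(W_{a^{\tau(j)}} \cap V_j)_{a^{\tau(j)}}$ and whose $\alpha$-twisted $1$-cocycle is obtained by restricting $g$ along $\tau$; one checks directly that conditions (1) and (2) of Definition \ref{def: cat twisted} are inherited. On morphisms we restrict the defining $0$-cochains similarly, and observe that this is continuous for the compact-open topologies since restriction of sections is continuous, and compatible with direct sums, so $R$ is a morphism of topological symmetric monoidal groupoids.

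\emph{Fully faithful.} The key point is that $R$ induces a homeomorphism on morphism spaces $\Gamma(\Iso(E,F)) \to \Gamma(\Iso(R(E),R(F)))$. Here I would use that $\Iso(E,F)$ is a genuine (untwisted) fibre bundle over $X$ — as established in Definition \ref{def: cat twisted} — and that $\Iso(R(E),R(F))$ is canonically its restriction data reassembled over the refined cover; but a section of a fibre bundle over $X$ is the same thing whether one describes it via a cover or a refinement, since sections glue. More precisely, $\Iso(E,F)$ and $\Iso(R(E),R(F))$ are isomorphic as fibre bundles over $X$ (both are built from the same transition cocycle $\psi_{a^ib^j}$, just indexed differently), and this isomorphism is compatible with the $\Aut$-actions, inducing the desired homeomorphism of section spaces. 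So $R$ is fully faithful at the level of topological hom-spaces, hence also after $\mathrm{Sing}$.

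\emph{Essentially surjective.} Given an $\alpha|_\cV$-twisted bundle $E'$ over $\cV$ with data $(W'_{a^j})$ over $V_j$ and cocycle $(g'_{a^jb^k})$, I would construct an $\alpha$-twisted bundle $E$ over $\cU$ with $R(E) \cong E'$. For each $i \in I$ take as the refining cover of $U_i$ the family $\{ W'_{a^j} : j \in \tau^{-1}(i)\text{-ish}\}$ — more carefully, one must partition or re-index so that the $W'$ sets, which a priori live over the $V_j$, get grouped under the $U_i$; the natural choice is to keep all the $W'_{a^j}$ with $V_j \subseteq U_i$ and note $W'_{a^j} \subseteq V_j \subseteq U_i$. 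The transition functions are the same $g'$, now viewed over $\cU$; conditions (1)--(2) hold because they held over $\cV$ and $\alpha|_\cV$ was defined by pulling back $\alpha$. Then $R(E) \cong E'$ via the identity $0$-cochain (up to the chosen normalisation coboundary, which gives an explicit isomorphism). Finally, direct sum is respected throughout, so after applying $\mathrm{Sing}$ we obtain a weak equivalence of good simplicial symmetric monoidal groupoids; combined with the homeomorphism of hom-spaces this gives an equivalence of topological groupoids as stated.

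The main obstacle is bookkeeping: carefully managing the \emph{disjoint} union convention for the covers (the Remark following Definition \ref{def: cat twisted}), ensuring the re-indexing in essential surjectivity is coherent, and tracking the normalisation coboundary relating $\alpha|_\cV$ to a completely normalised representative so that all the twisted cocycle identities line up on the nose. None of this is conceptually deep — the genuine (untwisted) nature of the isomorphism bundles $\Iso(E,F)$ is what makes the hom-space comparison work — but it requires care to present cleanly.
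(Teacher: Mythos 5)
Your full-faithfulness argument is a genuine conceptual shortcut. You observe that $\Iso(E,F)$ is a genuine (untwisted) generalised principal bundle over $X$, that the cocycle data defining $\Iso(R(E),R(F))$ is the restriction of that defining $\Iso(E,F)$ along $\tau$, and hence that the two bundles are canonically isomorphic with homeomorphic section spaces. The paper instead works by hand: it explicitly constructs the preimage $0$-cochain $\psi_{c^i} = f_{c^i a^{\tau(s)}} \varphi_{a^s} g_{a^{\tau(s)} c^i}$ for fullness, verifies faithfulness by a local-identity argument, and checks the homeomorphism separately. Your route is shorter and identifies the structural reason the computation works.

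Essential surjectivity is where the real content lies, and your proposal has a genuine gap there -- this is not ``bookkeeping.'' Given $E'$ over $\cV$ with data $(W'_{a^s})$ over each $V_s$ and $\alpha|_\cV$-twisted cocycle $(g'_{a^s b^t})$, you propose to build $E$ over $\cU$ by attributing each $W'_{a^s}$ to those $U_i$ with $V_s \subseteq U_i$ and keeping the transition functions $g'$ unchanged. This fails for two reasons. First, $\{\,W'_{a^s} : V_s \subseteq U_i\,\}$ need not cover $U_i$: a refinement only guarantees $V_s \subseteq U_{\tau(s)}$, and the $V_s$ contained in a fixed $U_i$ can fall well short of covering it. Second, and more fundamentally, a single $W'_{a^s}$ will typically be attributed to several distinct $U_i$, hence will appear several times in the disjoint-union cover $\coprod_{i \in I}(W_{a^i})_{a^i \in I^{(i)}}$ underlying $E$; Definition~\ref{def: cat twisted} then demands transition functions between these copies, which $g'$ does not supply. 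The $\alpha$-twisted cocycle condition moreover forces the transition from the copy of $W'_{a^s}$ attributed to $U_i$ to the copy attributed to $U_j$ to equal $\alpha_{ji\tau(s)}$, not the identity, so ``keeping the same $g'$'' cannot work. The paper's construction addresses both issues at once: the refining cover of $U_i$ is taken to be $\coprod_{s \in J}(U_i \cap W'_{a^s})_{a^s}$, indexed over all $s$ (so it does cover $U_i$), and the transition functions carry an explicit correction,
\[
\psi_{(b,t,j)(a,s,i)} = \alpha_{ji\tau(t)}\,\alpha^{-1}_{i\tau(t)\tau(s)}\,g'_{b^t a^s},
\]
after which the $\alpha$-twisted cocycle identity is verified directly. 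That correction factor is the substance of essential surjectivity, and your proposal as written does not produce it.
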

\begin{proof}
Consider the functor $i\mathrm{Vect}^n_\alpha(\cU, \mathcal{C}) \to  i\mathrm{Vect}^n_\alpha(\cV, \mathcal{C})$, induced by refinement. That is, let $\cV = (V_s)_{s \in J}$ and $\cU = (U_i)_{i \in I}$, and fix a refinement map $\tau: J \to I$. If $E$ is an $\alpha$-twisted vector bundle in $i\mathrm{Vect}^n_\alpha(\cU, \mathcal{C})$, defined on each open set $U_i$ by untwisted bundles, say, $E_i$, the refinement functor maps $E$ to the $\alpha$-twisted vector bundle constructed from the untwisted bundles $F_s = E_{\tau(s)}|_{V_s}$ on $V_s$. That this bundle is $\alpha$-twisted on $\cV$ is evident, and we obtain the defining 1-cochain by pulling back along $\tau$ -- that is, if $E$ is defined by the 1-cochain $(g_{a^{i}b^{j}})$ with respect to the disjoint union of the open covers $(W_{a^i})_{a^i \in I^{(i)}}$ of $U_i$, as $i$ runs over $I$, then $F$ is given by the 1-cochain $(g_{a^{\tau(s)}b^{\tau(t)}})$ with respect to the disjoint union of the open covers $(W_{a^{\tau(s)}}\vert_{V_s})_{a^{\tau(s)} \in I^{\tau(s)}}$ of $V_s$, as $s$ runs over $J$. Morphisms are likewise mapped to their pullbacks along $\tau$. 

We will show that this functor is full, faithful and essentially surjective, and verify that the bijection between the morphism spaces is a homeomorphism. First, to see that it is full, consider objects $E_\cU$ and $F_\cU$ in the category $i\mathrm{Vect}^n_\alpha(\cU, \mathcal{C})$, and denote by $E_\cV$ and $F_\cV$ their respective images under the refinement functor. Assume that $E_\cU$ is defined by a 1-cochain $(g_{a^{i}b^{j}})$, and $F_{\cU}$ is defined by $(f_{a^{i}b^{j}})$, with respect to an open cover $\coprod_{i \in I}(W_{a^i})_{a^i \in I^{(i)}}$ of $X$, as above. Let $(\varphi_{a^{s}})$ define a section of the isomorphism bundle from $E_{\cV}$ to $F_{\cV}$; which means that $(\varphi_{a^{s}})$ is a 0-cochain of $GL_n\cC$ along $\coprod_{s \in J}(W_{a^{\tau(s)}}\vert_{V_s})_{a^{\tau(s)} \in I^{\tau(s)}}$, satisfying 
\[ \varphi_{a^{s}} = f_{a^{\tau(s)}b^{\tau(t)}}\, \varphi_{b^{t}} \, g_{b^{\tau(t)}a^{\tau(s)}}
    \] on $W_{a^{\tau(s)}}\vert_{V_s} \cap W_{b^{\tau(t)}}\vert_{V_t}$. 
We must find a section $(\psi_{c^{i}})$ of $\Iso(E_{\cU}, F_{\cU})$ which maps to $(\varphi_{a^{s}})$ under the refinement functor. Since 
\begin{align*}
    {\varphi_{a^{s}}}^{-1} f_{a^{\tau(s)}c^{i}}f_{c^{i}b^{\tau(t)}} \, \varphi_{b^{t}} 
    &= {\varphi_{a^{s}}}^{-1} f_{a^{\tau(s)}b^{\tau(t)}}\, \varphi_{b^{t}}\, \alpha_{\tau(s)i\tau(t)} \\ 
    &= g_{a^{\tau(s)}b^{\tau(t)}}\,\alpha_{\tau(s)i\tau(t)} \\
    &= g_{a^{\tau(s)}c^{i}}g_{c^{i}b^{\tau(t)}}
\end{align*}
on $W_{a^{\tau(s)}}\vert_{V_s} \cap W_{b^{\tau(t)}}\vert_{V_t} \cap W_{c^{i}}$, we find that $$f_{c^{i}a^{\tau(s)}} {\varphi_{a^{s}} g_{a^{\tau(s)}c^{i}}} = f_{c^{i}b^{\tau(t)}} {\varphi_{b^{t}} g_{b^{\tau(t)}c^{i}}},$$ which in turn defines a section of the sheaf $GL_n\cC$ on $W_{c^{i}}$, and hence a 0-cochain $(\psi_{c^{i}})$ on $\coprod_{i \in I}(W_{c^{i}})_{c^i \in I^{(i)}}$. Clearly, the restriction of $\psi_{a^{\tau(s)}}$ to $W_{a^{\tau(s)}}\vert_{V_s}$ is $\varphi_{a^{s}}$. Observe also that 
\begin{align*}
    f_{c^{i}d^{j}}\,\psi_{d^{j}}\,g_{d^{j}c^{i}}  
    &= f_{c^ia^{\tau(s)}}\, {\varphi_{a^{s}} \,g_{a^{\tau(s)}c^{i}}} \, \alpha_{i j \tau(s)}\,\alpha_{\tau(s)j i} \\
    &= \psi_{c^{i}}. 
    \end{align*}
So, $(\psi_{c^{i}})$ defines a section of $\Iso(E_\cU, F_\cU)$ -- this proves that the refinement functor is full.

Now, we show faithfulness. Assume we have two 0-cochains $(\psi_{c^{i}})$ and $(\tilde{\psi}_{c^{i}})$ along $\coprod_{i \in I}(W_{c^{i}})_{c^i \in I^{(i)}}$ defining sections of $\Iso(E_\cU, F_\cU)$, with identical images under the refinement functor, given by a 0-cochain $(\varphi_{a^{s}})$ with respect to $\coprod_{s \in J}(W_{a^{\tau(s)}}\vert_{V_s})_{a^{\tau(s)} \in I^{\tau(s)}}$. Take an open set $U_i$ in $\cU$; 
every point of $U_i$ has a neighbourhood $W_{a^{\tau(s)}} \cap V_s \cap U_i$ on which the values of $\psi_{c^{i}}$ and $\tilde{\psi}_{c^{i}}$ are both given by $f_{c^{i}a^{\tau(s)}}\varphi_{a^{s}}g_{a^{\tau(s)}c^{i}}$. In other words, they are locally identical everywhere, and therefore define the same section. 

Moreover, the bijection $\Gamma(\Iso(E_{\cU},F_{\cU})) \to \Gamma(\Iso(E_{\cV},F_{\cV}))$ is a homeomorphism: it suffices to verify this locally, where the map reduces to the identity on sufficiently small open sets.

Finally, we need to know that the functor is essentially surjective. Consider an object $E_\cV$ in $i\mathrm{Vect}^n_\alpha(\cV, \mathcal{C})$ defined by a 1-cochain $(g_{a^{s}b^{t}})$ with respect to the disjoint union of open covers $(W_{a^{s}})_{a^s \in J^{(s)}}$ of $V_s$, as $s$ runs over $J$ (that is, $\coprod_{s \in J}(W_{a^{s}})_{a \in J^{(s)}}$). From this, we define a twisted vector bundle $E_\cU$ in $i\mathrm{Vect}^n_\alpha(\cU, \mathcal{C})$ as follows: for each $U_i$, consider the open cover $\coprod_{s \in J}(U_i \cap W_{a^{s}})_{a \in J^{(s)}}$. Denote by $U_i^{a,s}$ the set $U_i \cap W_{a^{s}}$, and define transition functions from $U_i^{a,s}$ to $U_j^{b,t}$ by \[\psi_{(b,t,j)(a,s,i)} = \alpha_{ji\tau(t)}\alpha^{-1}_{i \tau(t) \tau(s)}g_{b^{t}a^{s}}.\] It can be easily shown that on each $U_i$, these transition functions define a genuine vector bundle.
Through elementary algebraic steps (which we omit), it can be shown that these genuine vector bundles come together to form an $\alpha$-twisted bundle on $\cU$; that is,
\begin{align*}
    \psi_{(c,u,k)(b,t,j)}\,\psi_{(b,t,j)(a,s,i)} 
    &=\alpha_{kji}\, \psi_{(c,u,k)(a,s,i)}.
\end{align*}
Evidently, the refinement functor $i\mathrm{Vect}^n_\alpha(\cU, \mathcal{C}) \to i\mathrm{Vect}^n_\alpha(\cV, \mathcal{C})$ maps $E_\cU$ to $E_\cV$ (in this case, $j = \tau(t)$ and $i = \tau(s)$), which concludes the proof. 
\end{proof}

\begin{proposition} \label{prop: cohom alpha}
    If $\alpha, \tilde{\alpha} \in Z^2(\cU, \cC^*)$ are completely normalised, cohomologous 2-cocycles, then $i\mathrm{Vect}^n_\alpha(\cU, \mathcal{C})$ and $i\mathrm{Vect}^n_{\tilde{\alpha}}(\cU, \mathcal{C})$ are isomorphic topological groupoids. 
\end{proposition}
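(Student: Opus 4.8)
The plan is to produce the isomorphism explicitly by \emph{twisting transition functions by a $1$-cochain}. Since $\alpha$ and $\tilde\alpha$ are cohomologous in $H^2(\cU,\cC^*)$, I would first fix a $1$-cochain $\lambda=(\lambda_{ij})\in C^1(\cU,\cC^*)$ with $\tilde\alpha_{ijk}=\alpha_{ijk}\,(\delta\lambda)_{ijk}$, where $(\delta\lambda)_{ijk}=\lambda_{jk}\lambda_{ik}^{-1}\lambda_{ij}$. The key preliminary observation is that the complete-normalisation hypotheses on \emph{both} $\alpha$ and $\tilde\alpha$ force two identities on $\lambda$: evaluating the coboundary relation on a triple with a repeated index gives $\lambda_{ii}=e_{\cG}$ for every $i$; and, since $\alpha$ and $\tilde\alpha$ are antisymmetric under permutations of indices, so is $\delta\lambda$, and applying antisymmetry to the transposition $(ij)$ yields $\lambda_{ij}\lambda_{ji}=e_{\cG}$. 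These are exactly the identities the construction needs.

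Next I would define a functor $\Theta\colon i\mathrm{Vect}^n_\alpha(\cU,\mathcal{C})\to i\mathrm{Vect}^n_{\tilde\alpha}(\cU,\mathcal{C})$. On an object $E$ given by an $\alpha$-twisted $1$-cocycle $(g_{a^ib^j})$ along $\coprod_{i}(W_{a^i})_{a^i\in I^{(i)}}$, let $\Theta(E)$ be the object on the \emph{same} underlying cover defined by $\tilde g_{a^ib^j}:=g_{a^ib^j}\cdot\lambda_{ij}$, where $\lambda_{ij}$ is restricted to $W_{a^i}\cap W_{b^j}$ along the index map $a^i\mapsto i$. A one-line computation using centrality of $\cC^*$ in $GL_n\cC$ together with the coboundary relation shows that $(\tilde g_{a^ib^j})$ satisfies the $\tilde\alpha$-twisted cocycle condition, and its restriction to each $U_i$ is a genuine $1$-cocycle because $\lambda_{ii}=e_{\cG}$; so $\Theta(E)$ is a well-defined object. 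For morphisms, the cleanest route is to observe that the transition functions $h\mapsto f_{a^ib^j}\,h\,g_{b^ja^i}$ of $\Iso(E,F)$ are literally unchanged when $f,g$ are replaced by $\tilde f,\tilde g$ (the extra factor is $\lambda_{ij}\lambda_{ji}=e_{\cG}$, which is central); hence $\Iso(E,F)=\Iso(\Theta(E),\Theta(F))$ and $\Aut(E)=\Aut(\Theta(E))$ as generalised principal bundles. Defining $\Theta$ to be the identity on each morphism space $\Gamma(\Iso(E,F))$ then automatically produces a functor, compatible with composition and identities, and a homeomorphism on hom-spaces. (At the level of defining $0$-cochains this says that $(c_{a^i})$ satisfies $c_{a^i}=\tilde f_{a^ib^j}c_{b^j}\tilde g_{b^ja^i}$ if and only if it satisfies the untwisted relation, again by $\lambda_{ij}\lambda_{ji}=e_{\cG}$ and centrality.)

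Finally, running the same recipe with $\lambda^{-1}$---which witnesses $\alpha=\tilde\alpha\cdot\delta(\lambda^{-1})$ and obeys the same two normalisation identities---yields a functor $i\mathrm{Vect}^n_{\tilde\alpha}(\cU,\mathcal{C})\to i\mathrm{Vect}^n_\alpha(\cU,\mathcal{C})$, and the two composites are the identity on the nose (the defining cochains get multiplied by $\lambda$ and then $\lambda^{-1}$, and morphism spaces are untouched). Thus $\Theta$ is an isomorphism of topological groupoids, which is the assertion.

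The step I expect to require the most care is the derivation of $\lambda_{ii}=e_{\cG}$ and $\lambda_{ij}\lambda_{ji}=e_{\cG}$: it is essential to use that \emph{both} $\alpha$ and $\tilde\alpha$ are completely normalised, not merely that they are cohomologous, since otherwise these correction factors would survive and one would be forced into a looser (strong, rather than strict) identification of groupoids, with extra coherence data to track. Everything else is routine \v{C}ech bookkeeping, the only point needing attention being that $\lambda_{ij}$ must always be pulled back along the index map $a^i\mapsto i$ and restricted to the relevant intersection.
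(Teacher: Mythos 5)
Your argument is correct and takes essentially the same route as the paper's: twist the transition $1$-cochains by a $\cC^*$-valued $1$-cochain $\lambda$ witnessing the cohomology, leave morphism $0$-cochains unchanged (justified because centrality and $\lambda_{ij}\lambda_{ji}=1$ make the isomorphism bundles literally coincide), and invert via $\lambda^{-1}$. The paper merely asserts the normalisation identities $\lambda_{ii}=1$ and $\lambda_{ij}=\lambda_{ji}^{-1}$ for the witnessing cochain and declares the remaining verification straightforward; your derivation of those identities from the complete normalisation of \emph{both} $\alpha$ and $\tilde\alpha$, together with the explicit check that the $\Iso$ bundles are unchanged, supplies exactly the details the paper leaves out.
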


\begin{proof}
 Let $\cU = (U_i)_{i \in I}$. The difference between the 2-cocycles $\alpha$ and $\tilde{\alpha}$ is a coboundary, which means that there exists a 1-cochain $(\eta_{ij})_{i,j \in I}$ with respect to $\cU$, taking values in $\CC^*$, with $\eta_{ij} = \eta^{-1}_{ji}$ and $\eta_{ii}= 0$, such that $\alpha_{ijk} = \tilde{\alpha}_{ijk}\,\eta_{jk}\,\eta^{-1}_{ik}\,\eta_{ij}$. Define a functor $i\mathrm{Vect}^n_\alpha(\cU, \mathcal{C}) \to i\mathrm{Vect}^n_{\tilde{\alpha}}(\cU, \mathcal{C})$ which
 \begin{enumerate}
    \item sends an $\alpha$-twisted bundle $E$ in $i\mathrm{Vect}^n_\alpha(\cU, \mathcal{C})$, defined by a 1-cochain $(g_{a^{i}b^{j}})$, to the $\tilde{\alpha}$-twisted bundle $F$ in $i\mathrm{Vect}^n_{\tilde{\alpha}}(\cU, \mathcal{C})$ defined by $(g_{a^{i}b^{j}}\,\eta_{ji})$, and
    \item sends a morphism in $i\mathrm{Vect}^n_\alpha(\cU, \mathcal{C})$, defined by a cocycle $(\varphi_{c^i})$, to the morphism in $i\mathrm{Vect}^n_{\tilde{\alpha}}(\cU, \mathcal{C})$ defined by the same cocycle $(\varphi_{c^i})$.
 \end{enumerate}
   It is a straightforward verification that this functor is well defined, and induces an equivalence of categories. 
\end{proof}

A consequence of the above result is the following: 
\begin{corollary}
    If $\cV = (V_s)_{s \in S}$ is a refinement of $\cU = (U_i)_{i \in I}$, defined by different refining maps $\tau, \sigma: S \to T$, then the groupoids $i\mathrm{Vect}^n_{\tau^*\alpha}(\cV, \mathcal{C})$ and $i\mathrm{Vect}^n_{\sigma^*\alpha}(\cV, \mathcal{C})$ are isomorphic.
\end{corollary}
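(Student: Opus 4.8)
The strategy is to recognise the statement as an immediate consequence of Proposition~\ref{prop: cohom alpha}: I will show that the pulled-back $2$-cocycles $\tau^*\alpha$ and $\sigma^*\alpha$ are completely normalised $2$-cocycles of $\mathcal{C}^*$ with respect to the single cover $\cV$, and that they are cohomologous. Here I use only that $\tau,\sigma$ are refining maps, so that $V_s \subseteq U_{\tau(s)}$ and $V_s \subseteq U_{\sigma(s)}$ for every $s$.

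First I would check that $\tau^*\alpha$, given on $\cV$ by $(\tau^*\alpha)_{stu} = \alpha_{\tau(s)\tau(t)\tau(u)}$ (well defined on $V_s \cap V_t \cap V_u$ by the containments above), is completely normalised, and likewise $\sigma^*\alpha$. This is routine and I would state it without belabouring the computation: a repeated lower index $s$ forces $\tau(s)$ to repeat among $\tau(s),\tau(t),\tau(u)$, so condition (1) for $\alpha$ yields (1) for $\tau^*\alpha$; a permutation of $\{s,t,u\}$ either preserves a repetition (both sides equal the identity) or acts as the corresponding permutation on three distinct indices $\tau(s),\tau(t),\tau(u)$, so condition (2) for $\alpha$ yields (2) for $\tau^*\alpha$; and condition (3) for $\tau^*\alpha$ is immediate from (3) for $\alpha$.

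The substantive point is that $\tau^*\alpha$ and $\sigma^*\alpha$ are cohomologous on $\cV$ --- the standard fact that distinct refining maps induce chain-homotopic maps on \v{C}ech cochains of a sheaf of abelian groups. Concretely, one may take the $1$-cochain of $\mathcal{C}^*$ with respect to $\cV$ defined by
$$\eta_{st} = \alpha_{\tau(s)\,\sigma(s)\,\sigma(t)} \cdot \alpha_{\tau(s)\,\tau(t)\,\sigma(t)}^{-1},$$
well defined since $V_s \cap V_t \subseteq U_{\tau(s)} \cap U_{\sigma(s)} \cap U_{\tau(t)} \cap U_{\sigma(t)}$. A short manipulation of finitely many instances of the cocycle identity (3) for $\alpha$, using $\delta\alpha = 1$ (where $\delta$ denotes the \v{C}ech coboundary), shows $(\delta\eta)_{stu} = \eta_{tu}\,\eta_{su}^{-1}\,\eta_{st} = (\tau^*\alpha)_{stu}\,(\sigma^*\alpha)_{stu}^{-1}$, i.e. $\tau^*\alpha = \sigma^*\alpha \cdot \delta\eta$. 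Moreover this $\eta$ is automatically of the normalised form required in Proposition~\ref{prop: cohom alpha}: evaluating $\delta\eta = \tau^*\alpha\,(\sigma^*\alpha)^{-1}$ at the index strings $(s,s,t)$ and $(s,t,s)$ and using that both cocycles are completely normalised forces $\eta_{ss} = 1$ and $\eta_{st} = \eta_{ts}^{-1}$; alternatively, one cites that any coboundary linking two completely normalised cocycles can be chosen normalised (cf.~\cite{karoubi1997resolutions}).

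With $\tau^*\alpha$ and $\sigma^*\alpha$ exhibited as completely normalised, cohomologous $2$-cocycles with respect to $\cV$, Proposition~\ref{prop: cohom alpha} applies verbatim --- with $\alpha,\tilde\alpha,\eta$ there playing the roles of $\tau^*\alpha,\sigma^*\alpha,\eta$ here --- and produces an isomorphism of topological groupoids $i\mathrm{Vect}^n_{\tau^*\alpha}(\cV, \mathcal{C}) \xrightarrow{\ \sim\ } i\mathrm{Vect}^n_{\sigma^*\alpha}(\cV, \mathcal{C})$, which is exactly the claim. The only step that requires genuine care is verifying the coboundary identity $\delta\eta = \tau^*\alpha\,(\sigma^*\alpha)^{-1}$ for the explicit $\eta$ above; everything else is formal bookkeeping with the normalisation conventions.
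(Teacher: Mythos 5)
Your proposal follows the same overall route as the paper: reduce the corollary to Proposition~\ref{prop: cohom alpha} by exhibiting $\tau^*\alpha$ and $\sigma^*\alpha$ as cohomologous completely normalised $2$-cocycles on $\cV$, via an explicit $1$-cochain $\eta$ built out of $\alpha$-values along ``mixed'' index strings. The only substantive difference is the choice of $\eta$. The paper uses $\eta_{rs} = \alpha_{\sigma(r)\tau(r)\tau(s)}\,\alpha^{-1}_{\tau(s)\sigma(r)\sigma(s)}$, which (after a cyclic permutation using complete normalisation) is the \v{C}ech chain homotopy for the pair $(\sigma,\tau)$; you use $\eta_{st} = \alpha_{\tau(s)\sigma(s)\sigma(t)}\,\alpha^{-1}_{\tau(s)\tau(t)\sigma(t)}$, the chain homotopy for the pair $(\tau,\sigma)$. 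Both are correct choices, and your additional explicit checks that $\tau^*\alpha$ and $\sigma^*\alpha$ remain completely normalised are sound (the paper takes this for granted).

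One small sign slip is worth flagging. With your $\eta$, a direct application of the cocycle relation (or the standard identity $\delta h + h\delta = \sigma^* - \tau^*$ for the \v{C}ech prism operator $h$) gives $(\delta\eta)_{stu} = (\sigma^*\alpha)_{stu}\,(\tau^*\alpha)_{stu}^{-1}$, not $(\tau^*\alpha)_{stu}\,(\sigma^*\alpha)_{stu}^{-1}$ as you wrote. Concretely, in additive notation your six terms $\alpha_{bqr} - \alpha_{bcr} + \alpha_{acr} - \alpha_{apr} + \alpha_{apq} - \alpha_{abq}$ (with $a=\tau(s)$, $b=\tau(t)$, $c=\tau(u)$, $p=\sigma(s)$, $q=\sigma(t)$, $r=\sigma(u)$) collapse, after three applications of the cocycle relation, to $\alpha_{pqr} - \alpha_{abc}$, i.e.\ $\sigma^*\alpha - \tau^*\alpha$. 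Since cohomology is an equivalence relation, the conclusion $[\tau^*\alpha] = [\sigma^*\alpha]$ is unaffected; if you want the displayed identity to hold as stated, simply replace $\eta$ by $\eta^{-1}$, i.e.\ set $\eta_{st} = \alpha_{\tau(s)\tau(t)\sigma(t)}\,\alpha^{-1}_{\tau(s)\sigma(s)\sigma(t)}$. Everything else is correct and matches the paper's intent.
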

\begin{proof}
By Proposition \ref{prop: cohom alpha}, we only need to show that $\tau^*\alpha$ and $\sigma^*\alpha$ are cohomologous 2-cocycles. Expressly, we need a 1-cochain $(\eta_{rs})_{r,s \in J}$ of $\cC^*$ with respect to $\cV =(V_{s})_{s \in J}$, such that $\alpha_{\tau(r)\tau(s)\tau(t)} = \alpha_{\sigma(r)\sigma(s)\sigma(t)}\,\eta_{st}\,\eta_{tr}\,\eta_{rs}$. We set $\eta_{rs} = \alpha_{\sigma(r)\tau(r)\tau(s)} \, \alpha^{-1}_{\tau(s)\sigma(r)\sigma(s)}$. We see easily that $\eta_{rs} = \eta_{sr}^{-1}$ -- this is a statement of the 2-cocycle condition satisfied by $\alpha$; and $\eta_{rr} = 1$ as $\alpha$ is completely normalised. Basic algebraic manipulation, and applying the 2-cocycle relation for various indices at each step, shows that 
\begin{align*}
     \alpha_{\sigma(r)\sigma(s)\sigma(t)}\,\eta_{st}\,\eta_{tr}\,\eta_{rs} 
    &= \alpha_{\tau(r)\tau(s)\tau(t)},
\end{align*}
which establishes the result. (The basic steps are omitted here for brevity.) \end{proof}

The results above tell us that, as long as the cohomology class of $\alpha$ can be represented on an open cover $\cU$, the specific choice of representing cocycle is not important; and in fact, it only matters what $\alpha$ is in the colimit $H^2(X, \mathcal{C}^*)$ (or equivalently, $H^3(X, \ZZ)$). We assumed at the outset that $\alpha$ is torsion -- but note that we have not yet used this assumption. We see below that unless $\alpha$ is torsion, the category is empty. 
\begin{lemma}
\textnormal{(\cite{caldararu2000derived,grothendieck1955general, karoubi2012twisted}.)} Consider the category $i\mathrm{Vect}^n_\alpha(\cU, \mathcal{C})$ constructed as above, with twisting cocycle $\alpha \in Z^2(\cU, \cC^*)$ assumed to be completely normalised. If the category contains at least one object, then $\alpha$ is torsion. 
\end{lemma}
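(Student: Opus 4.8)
The plan is to exploit the determinant homomorphism $\det\colon GL_n\CC \to \CC^*$: applying it to the $\alpha$-twisted cocycle defining any object of $i\mathrm{Vect}^n_\alpha(\cU,\cC)$ will exhibit $\alpha^n$ as a \v{C}ech coboundary, whence $[\alpha]$ is $n$-torsion. (We take $n \geq 1$, the only case of interest.)

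In detail, I would proceed as follows. Suppose the category contains an object $E$; by Definition \ref{def: cat twisted} it is given by an $\alpha$-twisted $1$-cocycle $(g_{a^ib^j})$ of $GL_n\cC$ with respect to a cover $\mathcal{W} = \coprod_{i \in I}(W_{a^i})_{a^i \in I^{(i)}}$ refining $\cU$, where each $W_{a^i} \subset U_i$, satisfying $g_{a^ib^j}\,g_{b^jc^k} = g_{a^ic^k}\,\alpha_{ijk}$ on triple overlaps, with $\alpha_{ijk}$ read as the central scalar matrix $\alpha_{ijk}I_n$. Setting $h_{a^ib^j} := \det g_{a^ib^j}$ — a $1$-cochain of $\cC^*$ on $\mathcal{W}$ — and using $\det(\alpha_{ijk}I_n) = \alpha_{ijk}^n$, the twisted cocycle condition becomes
\[ h_{a^ib^j}\,h_{b^jc^k} = h_{a^ic^k}\,\alpha_{ijk}^n . \]
Thus the \v{C}ech coboundary of $(h_{a^ib^j})$ is precisely the pullback to $\mathcal{W}$ of the $2$-cocycle $\alpha^n = (\alpha_{ijk}^n)$; equivalently, writing $\rho$ for the refinement $\mathcal{W} \to \cU$, we have $\rho^*[\alpha]^n = 0$ in $H^2(\mathcal{W}, \cC^*)$.

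Finally, I would pass to the colimit over refinements. Since $H^2(X, \cC^*) = \varinjlim_{\cV} H^2(\cV, \cC^*)$, with refinement maps as structure maps, $\alpha$ and $\rho^*\alpha$ determine the same class $[\alpha] \in H^2(X, \cC^*)$, so $[\alpha]^n = 0$ in $H^2(X, \cC^*)$. Under the isomorphism $H^2(X, \cC^*) \simeq H^3(X, \ZZ)$ — coming from the exponential sequence $0 \to \ZZ \to \cC \to \cC^* \to 0$ together with the acyclicity of the soft sheaf $\cC$ of continuous functions on the paracompact space $X$ — it follows that $\alpha$ is torsion, of order dividing $n$. (The holomorphic analogue is verbatim the same, with $\cO^*$ in place of $\cC^*$ and $H^2(X,\cO^*) \simeq H^3(X,\ZZ)$ for $X$ Stein.)

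The computation is immediate; the only point needing any care is the bookkeeping between the cover $\cU$ carrying $\alpha$ and the finer cover $\mathcal{W}$ carrying $(g_{a^ib^j})$, together with the passage to the colimit — i.e.\ that triviality of $\rho^*[\alpha]^n$ on $\mathcal{W}$ forces triviality of $[\alpha]^n$ in $H^2(X,\cC^*)$. This is routine \v{C}ech cohomology, so I do not anticipate a genuine obstacle; a pleasant byproduct is the explicit bound $n$ on the order of the class $[\alpha]$.
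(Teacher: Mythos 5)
Your proof is correct, and it takes a genuinely different and more elementary route than the paper's. You apply the determinant $\det\colon GL_n\cC \to \cC^*$ directly to the twisted cocycle condition $g_{a^ib^j}g_{b^jc^k} = g_{a^ic^k}\,\alpha_{ijk}$, using $\det(\alpha_{ijk}I_n) = \alpha_{ijk}^n$ to exhibit $\alpha^n$ as the \v{C}ech coboundary of the $\cC^*$-valued $1$-cochain $(\det g_{a^ib^j})$; passing to the colimit over refinements then gives $[\alpha]^n = 0$ in $H^2(X,\cC^*)$. The paper instead works at the level of the long exact sequences in non-abelian sheaf cohomology: it observes that an $\alpha$-twisted bundle yields a class $[\tilde g] \in H^1(X, PGL_n\cC)$ whose coboundary is $[\alpha]$, and compares the two central extensions $1 \to \cC^* \to GL_n\cC \to PGL_n\cC \to 1$ and $1 \to \mu_n \to SL_n\cC \to PGL_n\cC \to 1$ via the induced commutative square of boundary maps, concluding that $[\alpha]$ factors through the $n$-torsion group $H^2(X,\mu_n)$. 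The two are of course related --- the determinant kills $SL_n\cC$ and restricts to the $n$-th power on the centre, which is exactly the algebra driving the paper's diagram --- but your argument is entirely self-contained, requires no machinery beyond the \v{C}ech coboundary operator, and makes the bound on the order of $[\alpha]$ completely explicit; the paper's version, by contrast, situates the lemma within the Brauer-group framework ($\mathrm{Br}(X) \hookrightarrow \mathrm{Br}'(X)$) that the introduction alludes to and makes the connection to $PGL_n$-bundles and Azumaya algebras transparent. Both yield the same bound (order dividing $n$), and your observation that the argument goes through verbatim with $\cO^*$ in place of $\cC^*$ is correct and worth retaining.
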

\begin{proof}
Firstly, observe that an $\alpha$-twisted vector bundle on $X$ -- defined by a family of untwisted $GL_n\CC$-principal bundles (equivalently, a family of complex vector bundles of rank $n$) over an open cover of $X$, with an $\alpha$-twisted gluing condition on the intersections -- naturally defines a $PGL_n\CC$-principal bundle on $X$. Isomorphism classes of $PGL_n\CC$-principal bundles on $X$ are represented by elements of the first cohomology group $H^1(X, PGL_n\cC)$. Since $X$ is paracompact, and $\cC^*$ is the centre of $GL_n\cC$, the short exact sequence of sheaves on $X$,
$ 1 \to \cC^* \to GL_n\cC \to PGL_n\cC \to 1,
$ induces an exact sequence of cohomology groups (cf. \cite[Chapter V]{grothendieck1955general}):

\[
H^0(X, \cC^*) \to H^0(X, GL_n\cC) \to H^0(X, PGL_n\cC) \xrightarrow{\partial} H^1(X, \cC^*) \]
\[ \to H^1(X, GL_n\cC) \to H^1(X, PGL_n\cC) \xrightarrow{\partial} H^2(X, \cC^*).
\]

The coboundary map $ \partial: H^1(X, PGL_n\mathcal{C}) \to H^2(X, \mathcal{C}^*) $ maps a cohomology class in $ H^1(X, PGL_n\mathcal{C}) $, represented by a cocycle $ (\tilde{g}_{ij}) $, to the cohomology class of the 2-cocycle $(\varphi_{ijk})$, defined by $ \varphi_{ijk} = g_{ij}\, g_{jk}\, g_{ki}$. Here, $ (g_{ij}) \in C^1(X, GL_n\mathcal{C}) $ is a lifting of $ (\tilde{g}_{ij})$ that is chosen to be completely normalised. 

We are assuming that an $\alpha$-twisted vector bundle of rank $n$ exists. Such an object is represented in $ H^1(X, PGL_n\mathcal{C})$, and mapped under $\partial$ to the cohomology class of $\alpha$ in $H^2(X, \cC^*)$. Consider now the short exact sequence 
$1 \to \mu_n \to SL_n\cC \to PGL_n\cC \to 1$ --
obtained by restricting the sheaves of the previous sequence -- for which we obtain a similar exact sequence of cohomology groups, and a boundary map $ \tilde{\partial}: H^1(X, PGL_n\mathcal{C}) \to H^2(X, \mathcal{C}^*) $. This yields a commutative diagram of sheaf morphisms, \[\begin{tikzcd}
    H^1(X, PGL_n\mathcal{C}) \arrow[r, "\partial"] \arrow[d, equal] & H^2(X, \mu_n) \arrow[d, hook] \\
    H^1(X, PGL_n\mathcal{C}) \arrow[r,"\tilde{\partial}"] & H^2(X, \mathcal{C}^*)
    \end{tikzcd},\]
showing expressly that $\alpha$ in $H^2(X, \cC^*)$ is $n$-torsion. 
\end{proof}

\begin{remark} The proof shows that if $\alpha$ is $n$-torsion -- where $n$ is the minimal integer for which $\alpha$ is torsion -- then $\alpha$-twisted vector bundles can only have ranks that are multiples of $n$; namely, $n$, $2n$, $3n$ etc.
\end{remark}

The construction is now nearly operational for our purposes: it remains only to introduce a symmetric monoidal product to tie together twisted bundles of varying rank. 
Firstly, choose any open cover $\cU = (U_i)_{i \in I}$ of $X$ on which $\alpha$ can be represented. Let \[i\mathrm{Vect}_\alpha(\cU, \mathcal{C}) = \coprod_{n {\geq 0}} i\mathrm{Vect}^n_\alpha(\cU, \mathcal{C}),\]
and define the product \[\square: i\mathrm{Vect}_\alpha(\cU, \mathcal{C}) \times i\mathrm{Vect}_\alpha(\cU, \mathcal{C}) \to i\mathrm{Vect}_\alpha(\cU, \mathcal{C}), \quad (E,F) \mapsto E \square F,\] where $E$ and $F$ are $\alpha$-twisted vector bundles on $X$ of ranks $n$ and $m$ respectively, and $E \square F$ is an $\alpha$-twisted bundle of rank $n+m$, given by the direct sum $E|_{U_i} \oplus F|_{U_i}$ on each open set $U_i$. It is readily seen that $(i\mathrm{Vect}_\alpha(\cU, \mathcal{C}), \square)$ is a symmetric monoidal category. 



Observe that, up to this point of the construction, we have been careful to not employ any argument or technique that pertains only to topological objects and not to holomorphic ones (provided the base space $X$ admits holomorphic charts). Therefore, in an exactly analogous fashion, we may construct the symmetric monoidal category of \textit{holomorphic} $\alpha$-twisted vector bundles over a reduced complex space $X$, which we denote by $(i\mathrm{Vect}_\alpha(\cU, \mathcal{O}), \square)$. This involves replacing the sheaves $GL_n\cC$ and $\cC^*$ by $GL_n\cO$ and $\cO^*$ respectively, and following through all of the arguments above. 

We next address functoriality. The construction behaves naturally under maps between spaces, as shown by a routine proof (with details omitted for brevity). The following proposition is stated in the topological setting but holds analogously for the holomorphic case. 

\begin{proposition} \label{prop: funct 3}
    Let $X, Y,$ and  $Z$ be paracompact Hausdorff spaces, $\mathcal{U} = (U_i)_{i \in I}$ an open cover of $Y$, and $\alpha \in Z^2(\cU , \cC^*)$ a completely normalised 2-cocycle of $\cC^*$ with respect to $\cU$. Then,
    
    \begin{enumerate}
        \item for any continuous map $\phi: X \to Y$, there exists a well-defined topological pullback functor:
        $$\phi^*: i\mathrm{Vect}_\alpha(\mathcal{U}, \mathcal{C}) \to i\mathrm{Vect}_{\phi^*\alpha}(\phi^{-1}(\mathcal{U}), \mathcal{C}),$$
        
        \item for composable maps $\phi: X \to Y$ and $\psi: Z \to X$, 
        $(\phi \circ \psi)^* = \psi^* \circ \phi^*$, and
        
        \item if $\mathrm{id}_Y: Y \to Y$ is the identity map, then 
        $\mathrm{id}_Y^* = \mathrm{id}_{i\mathrm{Vect}_\alpha(\mathcal{U}, \mathcal{C})}$.
    \end{enumerate}
    \end{proposition}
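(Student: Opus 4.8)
The plan is to define $\phi^{*}$ by precomposition with $\phi$ on all the cochain data, and then to observe that every defining identity --- the cocycle and twisted-cocycle conditions for objects, the intertwining relation for morphisms, and compatibility with composition, identities and $\square$ --- is \emph{preserved} by this precomposition, so that $\phi^{*}$ is a well-defined functor and $(\phi\circ\psi)^{*}=\psi^{*}\circ\phi^{*}$, $\mathrm{id}_{Y}^{*}=\mathrm{id}$ hold strictly, on the nose.

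First I would treat objects. Given continuous $\phi\colon X\to Y$, set $\phi^{*}\alpha:=(\alpha_{ijk}\circ\phi)_{i,j,k\in I}$, a $2$-cochain of $\mathcal{C}^{*}$ with respect to the open cover $\phi^{-1}(\mathcal{U})=(\phi^{-1}(U_{i}))_{i\in I}$ of $X$ (which is an open cover since $\phi^{-1}(\bigcup_{i}U_{i})=X$). Each of the three conditions defining ``completely normalised'' --- vanishing when an index repeats, the permutation-sign rule, and the relation $\alpha_{ijl}\,\alpha_{jkl}=\alpha_{ikl}\,\alpha_{ijk}$ --- is an identity of sections that holds pointwise, hence survives precomposition with $\phi$, so $\phi^{*}\alpha$ is again a completely normalised $2$-cocycle. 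If $E$ in $i\mathrm{Vect}^{n}_{\alpha}(\mathcal{U},\mathcal{C})$ is given by covers $(W_{a^{i}})_{a^{i}\in I^{(i)}}$ of the $U_{i}$ and a $1$-cochain $(g_{a^{i}b^{j}})$ of $GL_{n}\mathcal{C}$, define $\phi^{*}E$ by the covers $(\phi^{-1}(W_{a^{i}}))_{a^{i}\in I^{(i)}}$ of the $\phi^{-1}(U_{i})$ and the $1$-cochain $(g_{a^{i}b^{j}}\circ\phi)$. Since $\phi^{-1}(W_{a^{i}}\cap W_{b^{j}}\cap W_{c^{k}})=\phi^{-1}(W_{a^{i}})\cap\phi^{-1}(W_{b^{j}})\cap\phi^{-1}(W_{c^{k}})$, precomposing the identity $g_{a^{i}b^{j}}g_{b^{j}c^{k}}=g_{a^{i}c^{k}}\alpha_{ijk}$ with $\phi$ yields the $\phi^{*}\alpha$-twisted cocycle condition for $\phi^{*}E$, and likewise the untwisted cocycle condition on each $\phi^{-1}(U_{i})$; thus $\phi^{*}E\in i\mathrm{Vect}^{n}_{\phi^{*}\alpha}(\phi^{-1}(\mathcal{U}),\mathcal{C})$.

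Next I would treat morphisms. A morphism $E\to F$ is the local datum $(c_{a^{i}})$ of a section of $\Iso(E,F)$ subject to $c_{a^{i}}=f_{a^{i}b^{j}}c_{b^{j}}g_{b^{j}a^{i}}$; put $\phi^{*}$ of it equal to $(c_{a^{i}}\circ\phi)$. The transition functions $\psi_{a^{i}b^{j}}\colon h\mapsto f_{a^{i}b^{j}}(h)\,h\,g_{b^{j}a^{i}}(h)$ of $\Iso(E,F)$ precompose with $\phi$ to exactly the transition functions of $\Iso(\phi^{*}E,\phi^{*}F)$, so $\phi^{*}\Iso(E,F)=\Iso(\phi^{*}E,\phi^{*}F)$ and $(c_{a^{i}}\circ\phi)$ genuinely defines a section of the latter; the intertwining relation is again preserved by precomposition. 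Functoriality is then immediate: composition of morphisms in $i\mathrm{Vect}_{\alpha}(\mathcal{U},\mathcal{C})$ is represented by the pointwise product of the defining $0$-cochains, and precomposition with $\phi$ commutes with pointwise products; the identity morphism of $E$ is carried to the identity section of $\phi^{*}E$. The symmetric monoidal product is respected because pullback commutes with direct sum on each open set, so $\phi^{*}(E\square F)=\phi^{*}E\,\square\,\phi^{*}F$. For parts (2) and (3), note $(\phi\circ\psi)^{-1}(\mathcal{U})=\psi^{-1}(\phi^{-1}(\mathcal{U}))$ and, as precomposition of sections is strictly associative, $(\phi\circ\psi)^{*}\alpha=\psi^{*}(\phi^{*}\alpha)$; hence $(\phi\circ\psi)^{*}$ and $\psi^{*}\circ\phi^{*}$ have identical source and target categories and both send a defining cochain $\theta$ to $\theta\circ\phi\circ\psi$, so they coincide verbatim, while $\mathrm{id}_{Y}^{*}=\mathrm{id}$ because precomposition with $\mathrm{id}_{Y}$ does nothing.

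The one point that is not purely formal --- and the closest thing to an obstacle --- is verifying that $\phi^{*}$ respects the \emph{topological} enrichment, i.e.\ that the induced map $\Gamma(\Iso(E,F))\to\Gamma(\Iso(\phi^{*}E,\phi^{*}F))$, $s\mapsto\phi^{*}s$, is continuous for the compact-open topologies. This follows from the standard fact that pullback of sections of a fibre bundle along a continuous map is continuous, using that $\phi$ carries compacta to compacta. All of the above transfers verbatim to the holomorphic case, replacing $GL_{n}\mathcal{C}$, $\mathcal{C}^{*}$ by $GL_{n}\mathcal{O}$, $\mathcal{O}^{*}$ and ``continuous'' by ``holomorphic''.
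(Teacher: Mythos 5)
Your proposal is correct and follows exactly the same approach as the paper: both define $\phi^{*}$ by precomposing all the defining cochain data with $\phi$, and the paper likewise observes that well-definedness, monoidality, and continuity of the induced maps on hom-sets are routine to verify. You have simply filled in the routine verifications that the paper explicitly omits.
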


\begin{proof}
   
The proofs of statements (2) and (3) use standard arguments, and are omitted.  For (1), we define the pullback functor below, but omit the routine verification of well-definedness.
   
    We denote by $\phi^*\alpha$ the pullback of $\alpha$ along $\phi$, which is a completely normalised $2$-cocycle with respect to the open cover $\phi^{-1}(\cU)$ of $X$. 
   Let $\phi: X \to Y$ be a continuous map, and define \[\phi^*: i\mathrm{Vect}_\alpha(\mathcal{U}, \mathcal{C}) \to i\mathrm{Vect}_{\phi^*\alpha}(\phi^{-1}(\mathcal{U}), \mathcal{C})\] as follows.
    \begin{itemize}
        \item Say $E$ is an object of $i\mathrm{Vect}_\alpha(\mathcal{U}, \mathcal{C})$ given by the following data (cf.~Definition \ref{def: cat twisted}): an open cover $(W_{a^i})_{{a^i} \in I^{(i)}}$ of $U_i$ for each $i \in I$, and a 1-cochain $\left(g_{a^{i}b^{j}}\right)_{a^i\in I^{(i)}, b^j\in I^{(j)}, i,j \in I}$ of the sheaf $GL_n\cC$, with respect to $\coprod_{i \in I}(W_{a^i})_{a^i \in I^{(i)}}$. Then $\phi^*$ sends $E$ to the $\phi^*\alpha$-twisted vector bundle $\phi^*E$, given with respect to $\phi^{-1}(\cU)$, which is defined by: the open cover  $(\phi^{-1}(W_{a^i}))_{{a^i} \in I^{(i)}}$ of $\phi^{-1}(U_i)$ for each $i \in I$, and the 1-cochain $\left(g_{a^{i}b^{j} } \circ \phi\right)$ of $GL_n\cC$, with respect to $\coprod_{i \in I}(\phi^{-1}(W_{a^i}))_{a^i \in I^{(i)}}$. 
        \item A morphism in $i\mathrm{Vect}_\alpha(\mathcal{U}, \mathcal{C})$ from $E$ to $F$ -- objects which are defined by, say, $\alpha$-twisted cocycles $\left(g_{a^{i}b^{j}}\right)$ and $\left(f_{a^{i}b^{j}}\right)$ respectively, with respect to the open cover $\coprod_{i \in I}(W_{a^i})_{a^i \in I^{(i)}}$ -- that is given by a 0-cochain $(c_{a^{i}})_{a \in I^{(i)}, i\in I}$ of $GL_n\cC$ with respect to the same cover $\coprod_{i \in I}(W_{a^i})_{a^i \in I^{(i)}}$, is mapped under $\phi^*$ to the morphism $\phi^*(c_{a^{i}}): \phi^*E \to \phi^*F$. The morphism $\phi^*(c_{a^{i}})$ is defined as the 0-cochain $(c_{a^{i}} \circ \phi)$ with respect to the open cover $\coprod_{i \in I}(\phi^{-1}(W_{a^i}))_{a^i \in I^{(i)}}$ of $X$. 
    \end{itemize} 
    To verify well-definedness, one needs to show that map $\phi^*$ is a functor which is both monoidal and topological (that is, all induced maps on hom-sets are continuous): this is straightforward. 
\end{proof} 

\begin{remark}
    We write $\phi^*$ whenever it is clear, but when we need to make a distinction between the functors $\phi^*: i\mathrm{Vect}_\alpha(\mathcal{U}, \mathcal{C}) \to i\mathrm{Vect}_{\phi^*\alpha}(\phi^{-1}(\mathcal{U}), \mathcal{C})$ and $\phi^*: i\mathrm{Vect}_\alpha(\mathcal{U}, \mathcal{O}) \to i\mathrm{Vect}_{\phi^*\alpha}(\phi^{-1}(\mathcal{U}), \mathcal{O})$, such as in the proof of the main theorem, we will use $\phi^*_\cC$ and $\phi^*_\cO$ respectively. 
\end{remark}

Recall that the ordinary K-theory $K^{\cO}(X)$ of a Stein space $X$ is the group completion of the abelian monoid of holomorphic vector bundles on $X$. Likewise, the topological K-theory $K^{\cC}(X)$ is obtained from the monoid of continuous vector bundles on $X$.  Below, we provide definitions for the higher holomorphic and topological $\alpha$-twisted K-theories of $X$. 

\subsection{Higher $\alpha$-twisted K-theory of $X$} \label{sec: twisted k theory for X}

As above, let $X$ be a reduced Stein space, and assume that $\alpha \in H_{\mathrm{tor}}^2(X, \mathcal{O}^*)$, or equivalently, in $H_{\mathrm{tor}}^2(X, \mathcal{C}^*)$. Let $\mathcal{U} = (U_i)_{i \in I}$ be an open cover of $X$ on which $\alpha$ can be represented as a completely normalised, holomorphic 2-cocycle. It is helpful to slightly abuse notation and denote both the representing cocycle and the cohomology class by $\alpha$ (Proposition \ref{prop: cohom alpha} justifies this choice). We build the definitions of higher holomorphic and topological $\alpha$-twisted $K$-theory as enriched versions of Quillen's $S^{-1}S$ construction.  As far as we are aware, this is the first definition of higher holomorphic twisted K-theory to be introduced to the literature.


Recall that in Section \ref{gp completion} we generalised the discrete $S^{-1}S$ construction to good simplicial symmetric monoidal categories $S$ (cf. Definition \ref{def: good}). This construction can be applied to the topological groupoids $(i\mathrm{Vect}_\alpha(\cU, \cO), \square)$ and $(i\mathrm{Vect}_\alpha(\cU, \cC), \square)$, after converting them to simplicial groupoids, and ensuring that they are good. First, we observe easily that the following result holds:

\begin{lemma}
If $E$ is an object of $(i\mathrm{Vect}_\alpha(\cU, \cO), \square)$ or $(i\mathrm{Vect}_\alpha(\cU, \cC), \square)$, then the functor $E\,  \square \, -$ is faithful. 
\end{lemma}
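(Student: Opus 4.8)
\emph{Proof proposal.}
The plan is to unwind the explicit cochain descriptions of objects and morphisms from Definition \ref{def: cat twisted} and observe that the functor $E\,\square\,-$ acts on morphism data by a block-diagonal embedding of $0$-cochains, which is visibly injective.

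First I would reduce to a single open cover. Given the object $E$ and two morphisms $\phi,\psi\colon F\to F'$ with $E\,\square\,\phi=E\,\square\,\psi$, Proposition \ref{prop: refine iso} lets me pass to a common refinement of the covers underlying $E$, $F$ and $F'$ without changing the relevant morphism sets, since the refinement functors are in particular faithful and commute with $\square$. So I may assume $E$, $F$, $F'$ are all presented with respect to one cover $\coprod_{i\in I}(W_{a^i})_{a^i\in I^{(i)}}$ subordinate to $\cU$: say $E$ is given by an $\alpha$-twisted $1$-cocycle $(g_{a^ib^j})$ of $GL_n\cC$, the bundles $F,F'$ by $\alpha$-twisted $1$-cocycles $(f_{a^ib^j})$ and $(f'_{a^ib^j})$ of $GL_m\cC$, and $\phi,\psi$ by $0$-cochains $(c_{a^i})$ and $(d_{a^i})$ of $GL_m\cC$ satisfying the morphism compatibility conditions of Definition \ref{def: cat twisted}.

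Next I would compute $E\,\square\,\phi$. By the definition of $\square$, the bundle $E\,\square\,F$ is the $\alpha$-twisted bundle given on each $W_{a^i}$ by the block-diagonal cocycle $\mathrm{diag}(g_{a^ib^j},f_{a^ib^j})$, and $E\,\square\,\phi=\mathrm{id}_E\,\square\,\phi\colon E\,\square\,F\to E\,\square\,F'$ is represented by the $0$-cochain whose value on $W_{a^i}$ is $\bigl(\begin{smallmatrix}1_n&0\\0&c_{a^i}\end{smallmatrix}\bigr)$; here I use that $\mathrm{id}_E$ is the constant cochain $1_n$, which satisfies the endomorphism compatibility condition because $\alpha$ being completely normalised forces $g_{a^ib^j}g_{b^ja^i}=1_n$ (take $c^k=a^i$ in the $\alpha$-twisted cocycle relation and use $g_{a^ia^i}=1_n$ together with $\alpha_{iji}=e$). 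Likewise $E\,\square\,\psi$ is represented by the $0$-cochain with value $\bigl(\begin{smallmatrix}1_n&0\\0&d_{a^i}\end{smallmatrix}\bigr)$ on $W_{a^i}$. Comparing these two cochains index by index, the hypothesis $E\,\square\,\phi=E\,\square\,\psi$ forces $c_{a^i}=d_{a^i}$ for every $a^i$, hence $\phi=\psi$. This proves that $E\,\square\,-$ is faithful on $i\mathrm{Vect}_\alpha(\cU,\cC)$; the holomorphic case is word-for-word identical, replacing $GL_n\cC$ and $\cC^*$ by $GL_n\cO$ and $\cO^*$.

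I do not expect any genuine obstacle. The only point needing a little care is the cover bookkeeping when forming $E\,\square\,F$ from objects that may a priori be presented on different refinements of $\cU$, and that is exactly what Proposition \ref{prop: refine iso} takes care of.
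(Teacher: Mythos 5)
The paper states this lemma without proof (``we observe easily''), so there is no argument of the paper's to compare against. Your proof is correct and is the natural one. The two small points that need care are both handled properly: the reduction to a common refinement via Proposition \ref{prop: refine iso} (which is indeed faithful and compatible with $\square$, since restriction commutes with direct sum), and the verification that $\mathrm{id}_E$ is represented by the constant cochain $1_n$. For the latter, your use of complete normalisation is exactly right: taking $c^k = a^i$ in the twisted cocycle relation gives $g_{a^ib^j}g_{b^ja^i} = g_{a^ia^i}\alpha_{iji}$, and $g_{a^ia^i}=1_n$ (it is an idempotent invertible matrix) while $\alpha_{iji}=e$ (repeated index), so $g_{a^ib^j}g_{b^ja^i}=1_n$ as required. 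Once that is in place, $E\,\square\,\phi$ and $E\,\square\,\psi$ are the block-diagonal cochains $\mathrm{diag}(1_n,c_{a^i})$ and $\mathrm{diag}(1_n,d_{a^i})$, and equality of these as sections forces $c_{a^i}=d_{a^i}$ for all indices, hence $\phi=\psi$. The holomorphic case is, as you say, identical.
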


By applying the singular functor to the morphism spaces, we convert $(i\mathrm{Vect}_\alpha(\cU, \cO), \square)$ and $(i\mathrm{Vect}_\alpha(\cU, \cC), \square)$ to simplicial symmetric monoidal groupoids. (For convenience, whenever clear, we use the same notation for both the simplicially enriched and topologically enriched categories of $\alpha$-twisted vector bundles. This abuse of notation is justified in our situation -- where we are primarily concerned with weak equivalences between hom-sets -- as the model categories of simplicial sets and topological spaces are Quillen equivalent.) We now define the holomorphic, $\alpha$-twisted K-groups (cf. Definition \ref{k theory def}) with respect to $\cU$ as
\[ K_\alpha^{-n, \cO}(\cU) = \pi_n\lvert N\, (i\mathrm{Vect}_\alpha(\cU, \cO))^{-1}i\mathrm{Vect}_\alpha(\cU, \cO)\rvert.
    \]
Similarly, define the topological, $\alpha$-twisted K-groups with respect to $\cU$ as
\[ K_\alpha^{-n, \cC}(\cU) = \pi_n\lvert N\,(i\mathrm{Vect}_\alpha(\cU, \cC))^{-1}i\mathrm{Vect}_\alpha(\cU, \cC)\rvert.
    \]
We call the topological spaces $K_\alpha^{\cO}(\cU) = \lvert N\, (i\mathrm{Vect}_\alpha(\cU, \cO))^{-1}i\mathrm{Vect}_\alpha(\cU, \cO) \rvert$ and $K_\alpha^{\cC}(\cU) = \lvert N\,(i\mathrm{Vect}_\alpha(\cU, \cC))^{-1}i\mathrm{Vect}_\alpha(\cU, \cC)\rvert$ the holomorphic and topological K-theory spaces with respect to $\cU$, respectively.

\begin{remark} We briefly remark on our choice to place $-n$ as a superscript, rather than $n$ as a subscript, as it appears in the previous section. The choice is not of particular importance, and is mainly to follow historic convention as best as we can. Algebraic K-theory uses subscripts as it often defines covariant functors. Topological K-theory, on the other hand, typically uses superscripts as it determines contravariant functors, and represents a generalised cohomology theory. 
    Since we precompose the K-theory functor of Section \ref{sec:homotopy} with the contravariant functor $X \mapsto i\mathrm{Vect}_\alpha(\cU, \cO)$, we add the negative sign in front of $n$ to ensure that the arrows in associated long exact sequences go in the correct direction.\end{remark}
 
We find that the K-theory functor above does not depend on the choice of open cover $\cU$ -- that is, if $\cV$ is a refinement of $\cU$ on which $\alpha$ can be represented, then the K-groups defined using $\cV$ are isomorphic to the ones above.

\begin{proposition}
    If $\cV$ is a refinement of $\cU$, then for all $n {\geq 0}$, $K_\alpha^{-n, \cC}(\cU) \simeq K_\alpha^{-n, \cC}(\cV)$ and $K_\alpha^{-n, \cO}(\cU) \simeq K_\alpha^{-n, \cO}(\cV)$. 
\end{proposition}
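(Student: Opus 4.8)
The plan is to package the rank-wise refinement equivalences of Proposition~\ref{prop: refine iso} into a single monoidal weak equivalence and then invoke Theorem~\ref{thm: K-theory equiv}. Fix once and for all a refining map $\tau$. For each rank $n$, Proposition~\ref{prop: refine iso} (and its evident holomorphic analogue, obtained by replacing $\cC$ with $\cO$ throughout the construction) supplies an equivalence of topological groupoids $R_\tau^n : i\mathrm{Vect}^n_\alpha(\cU, \cC) \to i\mathrm{Vect}^n_\alpha(\cV, \cC)$, and these assemble across $n$ into a functor
\[
R_\tau : i\mathrm{Vect}_\alpha(\cU, \cC) = \coprod_{n \geq 0} i\mathrm{Vect}^n_\alpha(\cU, \cC) \longrightarrow \coprod_{n \geq 0} i\mathrm{Vect}^n_\alpha(\cV, \cC) = i\mathrm{Vect}_\alpha(\cV, \cC).
\]
Since $E \,\square\, F$ is formed fibrewise as the direct sum $E|_{U_i} \oplus F|_{U_i}$, and the refinement functor takes an object defined by untwisted bundles $E_i$ on $U_i$ to the one defined by $E_{\tau(s)}|_{V_s}$ on $V_s$, restriction along $\tau$ commutes with direct sum; hence $R_\tau(E \,\square\, F) = R_\tau(E) \,\square\, R_\tau(F)$ on the nose, both at the level of objects and of the defining $1$-cochains, so $R_\tau$ is a \emph{strict} symmetric monoidal functor.

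Next I would check that, after applying the singular functor to the morphism spaces, $R_\tau$ is a weak equivalence of \emph{good} simplicial symmetric monoidal categories in the sense of Definition~\ref{simp dwyer kan}. Goodness of both source and target is immediate: every morphism is an isomorphism because these are groupoids, and the translation functors $E\,\square\,-$ are faithful by the (unlabelled) lemma to that effect stated just above. For the three conditions: condition (1) is the strict monoidality just noted; condition (2) holds because Proposition~\ref{prop: refine iso} shows each induced map of morphism spaces $\Gamma(\Iso(E_\cU, F_\cU)) \to \Gamma(\Iso(E_\cV, F_\cV))$ is a homeomorphism, and $\mathrm{Sing}$ sends a homeomorphism to an isomorphism — hence a weak equivalence — of simplicial sets; condition (3) holds because Proposition~\ref{prop: refine iso} asserts $R_\tau$ is an equivalence of groupoids, so $\pi_0 R_\tau$ is an equivalence of categories (and by the remark following Theorem~\ref{main prop}, essential surjectivity alone suffices beyond condition (2)).

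Finally, Theorem~\ref{thm: K-theory equiv} applies to $\mathrm{Sing}(R_\tau)$ and yields isomorphisms $K_n\big(i\mathrm{Vect}_\alpha(\cU, \cC)\big) \xrightarrow{\sim} K_n\big(i\mathrm{Vect}_\alpha(\cV, \cC)\big)$ for all $n \geq 0$; unwinding Definition~\ref{k theory def} and the definition of $K_\alpha^{-n,\cC}(\cdot)$, these are precisely the asserted isomorphisms $K_\alpha^{-n, \cC}(\cU) \simeq K_\alpha^{-n, \cC}(\cV)$. Running the identical argument with $\cO$ in place of $\cC$ gives $K_\alpha^{-n, \cO}(\cU) \simeq K_\alpha^{-n, \cO}(\cV)$. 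The genuine mathematical content here has already been discharged in Proposition~\ref{prop: refine iso} and Theorem~\ref{thm: K-theory equiv}; the only point that needs to be written out with any care is the assembly step — verifying that the rank-wise equivalences cohere into a well-defined, strictly monoidal, topologically (hence simplicially) enriched functor on the coproduct category, and that goodness is inherited — which is routine but worth stating explicitly. I do not anticipate any serious obstacle.
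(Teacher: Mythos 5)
Your proof is correct and follows the same strategy as the paper: the refinement functor is an equivalence of topological groupoids (Proposition \ref{prop: refine iso}), which after assembling across ranks and applying $\mathrm{Sing}$ becomes a weak equivalence of good simplicial symmetric monoidal categories, so Theorem \ref{thm: K-theory equiv} gives the isomorphism of K-groups. The paper states this in two sentences and leaves the assembly and goodness checks implicit; you have simply made them explicit.
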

\begin{proof}
 We know that $i\mathrm{Vect}_\alpha(\cU, \cC)$ and $i\mathrm{Vect}_\alpha(\cV, \cC)$ (or $i\mathrm{Vect}_\alpha(\cU, \cO)$ and $i\mathrm{Vect}_\alpha(\cV, \cO)$) are equivalent as topological (or simplicial) groupoids. The K-theory functor above is a particular case of the functor in Definition \ref{k theory def}, which sends, by Theorem \ref{thm: K-theory equiv}, weakly equivalent simplicial symmetric monoidal categories to isomorphic K-groups.
\end{proof}

The above proposition allows us to rewrite $K_\alpha^{-n, \cC}(\cU)$ simply as $K_\alpha^{-n, \cC}(X)$, and $K_\alpha^{-n, \cO}(\cU)$ as $K_\alpha^{-n, \cO}(X)$; we call these groups holomorphic and topological $\alpha$-twisted K-groups of $X$, respectively.

\subsection{The higher $\alpha$-twisted Oka principle}
We have assembled almost all of the machinery needed to prove the main theorem, the Oka principle in higher, twisted K-theory. We need one more auxiliary lemma, after which we will restate and prove the theorem.

\begin{definition}
    Let $X$ be a Stein space, and let $\alpha \in H^2(X,\mathcal{O}^*)$ be a torsion class. Let $\mathcal{U} = (U_i)_{i \in I}$ be an open cover of $X$ on which $\alpha$ can be represented as a completely normalised 2-cocycle (which, slightly abusing notation, we denote throughout this section by $\alpha = (\alpha_{ijk})$). We say that an open set $V \subset X$ is \textit{good} if it satisfies the following condition:
    for every topological $\alpha$-twisted vector bundle $E$ on $V$ with respect to $\cU$, there exists a holomorphic $\alpha$-twisted vector bundle $F$ on $V$ with respect to $\cU$, such that $E$ and $F$ are isomorphic as topological $\alpha$-twisted vector bundles.
\end{definition}


\begin{definition} A \textit{compact box} in $\CC^n$ is a set \[\{(x_1 + iy_1, x_2 + iy_2, \cdots, x_n + iy_n) \in \CC^n \, \vert \, a_i \leq x_i \leq b_i \textnormal{ and } a_i' \leq y_i \leq b_i'\}\] for real numbers $a_i \leq b_i$ and $a_i' \leq b_i'$. We refer to the interior of such a set as an \textit{open, relatively compact box} in $\CC^n$. 
\end{definition}

The following lemma is the $\alpha$-twisted analogue of \cite[Lemma in  \S5]{cartan1958espaces}. Our proof follows similar ideas, while incorporating geometric interpretations and making some arguments more explicit. 
\begin{lemma} \label{again compact box induction}
    Let $\Gamma \subset \CC^n$ be an open, relatively compact box of real dimension $k$, with one edge $J$ a union of two open intervals $J'$ and $J''$. Let $\cU = (U_i)_{i \in I}$ be an open cover of $\Gamma$, and say $E$ is a topological $\alpha$-twisted bundle on $\Gamma$, given by an open cover $(W_{a^i})_{{a^i} \in I^{(i)}}$ of $U_i$ for each $i \in I$, and a 1-cochain $\left(g_{a^{i}b^{j}}\right)$ of the sheaf $GL_n\cC$, with respect to the open cover $\coprod_{i \in I}(W_{a^i})_{a^i \in I^{(i)}}$ of $\Gamma$, satisfying the conditions of Definition \ref{def: cat twisted}. Assume without loss of generality that $\Gamma = J \times S_{k-1}$, where $S_{k-1} \subset \RR^{k-1}$. Let $\mathrm{pr}: \Gamma \to J$ be the canonical projection, and denote by $\Gamma'$ and $\Gamma''$ the subsets $\mathrm{pr}^{-1}(J')$ and $\mathrm{pr}^{-1}(J'')$ respectively. Assume $V \subset X$ is an open subset of a Stein space $X$, which is isomorphic to an analytic subspace of $\Gamma$. If the sets $V \cap \Gamma'$ and $V \cap \Gamma''$ are good, then $V$ is good. 
    \end{lemma}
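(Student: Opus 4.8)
The plan is to mimic the classical Cartan argument for the untwisted case, exploiting the Murray correspondence (Theorem~\ref{Murray correspondence}) to reduce the twisted problem to an ordinary principal bundle problem, and then apply the parametric Oka--Grauert principle (Theorem~\ref{parametric Cartan}) together with the Cartan-type vanishing and density statements of Theorem~\ref{theorem: main theorem}. First I would fix a topological $\alpha$-twisted vector bundle $E$ on $V$ with respect to $\cU$; since $V\cap\Gamma'$ and $V\cap\Gamma''$ are good, the restrictions $E|_{V\cap\Gamma'}$ and $E|_{V\cap\Gamma''}$ are each isomorphic to holomorphic $\alpha$-twisted bundles $F'$ and $F''$. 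The goal is to glue $F'$ and $F''$ (after modifying one of them by an automorphism over the overlap $V\cap\Gamma'\cap\Gamma''$) into a single holomorphic $\alpha$-twisted bundle $F$ on $V$ isomorphic to $E$. The key observation is that $\Iso(E,E)$ and, more generally, the isomorphism bundles between $\alpha$-twisted bundles are \emph{genuine} (untwisted) generalised principal bundles on $V$ (Definition~\ref{def: cat twisted}), so the gluing data lives in an honest sheaf of sections of a holomorphic Lie group bundle $\cE = \Aut(F)$, and all the classical machinery applies.

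The second step carries out the gluing. Over the overlap $V\cap\Gamma'\cap\Gamma''$ we have a topological isomorphism between $F'|_{\text{overlap}}$ and $F''|_{\text{overlap}}$ (obtained by composing the given isomorphisms with $E$), i.e.\ a continuous section of the isomorphism bundle $\Iso(F',F'')$, which is a genuine holomorphic Lie group bundle torsor over $V\cap\Gamma'\cap\Gamma''$. Because $V$ is an analytic subspace of $\Gamma=J\times S_{k-1}$ with $J=J'\cup J''$, the overlap $V\cap\Gamma'\cap\Gamma''$ is itself (isomorphic to) an analytic subspace of a product of intervals and hence holomorphically convex in the relevant sense; here I would invoke parts (1) and (2) of Theorem~\ref{theorem: main theorem} (path-connectedness of $H^0$ of the sheaf of parametrised maps and density of the restriction map from a larger Stein open set) to approximate this continuous transition isomorphism by a holomorphic one, sufficiently closely that the approximant remains an isomorphism. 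Then I would correct $F''$ by this holomorphic automorphism so that $F'$ and the corrected $F''$ agree holomorphically on the overlap and glue to a holomorphic $\alpha$-twisted bundle $F$ on $V$. Finally I would check that $F$ is topologically isomorphic to $E$: over $V\cap\Gamma'$ it agrees with $F'\cong E|_{V\cap\Gamma'}$ by construction, and the correction over the overlap was chosen within the same homotopy class (again using the connectivity from Theorem~\ref{theorem: main theorem}(1), or Corollary~\ref{cor: htpy invariance}), so the local topological isomorphisms patch to a global one. This shows $V$ is good.

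The main obstacle I anticipate is the approximation/gluing step over $V\cap\Gamma'\cap\Gamma''$: one must ensure not only that the continuous transition isomorphism can be approximated by a holomorphic one (density, Theorem~\ref{theorem: main theorem}(2)), but that the approximation can be performed while staying inside the group of automorphisms (an open condition, so small enough approximations suffice) \emph{and} within the correct path component, so that the resulting holomorphic bundle $F$ is topologically — not merely holomorphically — isomorphic to $E$. This is exactly the point where the $(N,H,C)$-map formalism of Theorem~\ref{theorem: main theorem} is designed to help: one sets up the homotopy from the continuous transition map to its holomorphic approximant as a parametrised family of sections of $\cE^c$ that is holomorphic at the endpoint, constant (identity) on the part already holomorphic, and uses path-connectedness of $H^0(V', \cF)$ for a suitable Stein neighbourhood $V'$. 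A secondary technical nuisance is bookkeeping the various open covers and refinements (the $W_{a^i}$ and the refinement behaviour from Proposition~\ref{prop: refine iso}) so that all the cochains are defined with respect to compatible covers; this is routine but must be handled carefully, and I would pass to a common refinement at the outset to avoid clutter.
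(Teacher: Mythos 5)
Your overall strategy parallels the paper's at the high level — restrict $E$ to $V'=V\cap\Gamma'$ and $V''=V\cap\Gamma''$, obtain holomorphic $\alpha$-twisted bundles $F'$, $F''$ with topological isomorphisms, and use Cartan--Grauert theory on the Stein overlap to pass from a continuous comparison datum to a holomorphic one. But there is a genuine gap at the gluing step that cannot be waved past. You write that you would ``correct $F''$ by this holomorphic automorphism so that $F'$ and the corrected $F''$ agree holomorphically on the overlap.'' The holomorphic approximant $k$ of the continuous transition $h=(c')^{-1}c''$ lives only on $V'\cap V''$, so it cannot be used to modify $F''$ on all of $V''$; ``correcting $F''$'' is simply not defined. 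The paper's proof supplies precisely the missing idea: view $k_{a^i}$ as a $1$-cocycle with respect to the two-element cover $\{W_{a^i}\cap V',\, W_{a^i}\cap V''\}$ of $W_{a^i}\cap V$; since $h_{a^i}=(c'_{a^i})^{-1}c''_{a^i}$ already splits, the bundle $h$ defines is topologically trivial, so by Corollary~\ref{cor: homotopic implies iso} the bundle defined by $k$ is topologically trivial, hence holomorphically trivial by Oka--Grauert (Theorem A of \cite{cartan1958espaces}). This gives a holomorphic splitting $k_{a^i}=(h'_{a^i})^{-1}h''_{a^i}$ with $h'_{a^i}$ defined on all of $W_{a^i}\cap V'$ and $h''_{a^i}$ on all of $W_{a^i}\cap V''$. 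It is \emph{these} that one uses to conjugate $f'$ on $V'$ and $f''$ on $V''$ so that they literally agree on the overlap, yielding a single holomorphic cochain $f_{a^ib^j}$ over $\coprod W_{a^i}$ on $V$. This splitting step is a second, separate application of the Oka--Grauert principle (beyond the deformation $h\sim k$), and it is the crux of the gluing; your proposal omits it entirely.

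Two further points where you diverge from the paper's argument. First, you try to conclude directly that the glued $F$ is topologically isomorphic to $E$ by appealing to the homotopy class of the correction. The paper explicitly cautions that the local isomorphisms $E|_{V'}\cong F|_{V'}$ and $E|_{V''}\cong F|_{V''}$ need not patch over $V$; instead of proving $E\cong F$, it shows that the genuine generalised principal bundle $\Iso(E,F)$ — whose transition cocycle over the two-set cover $\{V',V''\}$ is a single continuous section $\varphi:V'\cap V''\to|\Aut(E)|$ — can be deformed (Theorem~\ref{Cartan's thm 1}) to a holomorphic $\psi$, so $\Iso(E,F)$ acquires a compatible holomorphic structure by Corollary~\ref{cor: homotopic implies iso}, and then \emph{transports} holomorphic structure from $F$ to $E$ through local holomorphic sections of $\Iso(E,F)$. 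If you wish to argue $F\cong E$ directly you would have to carry out the cocycle computation carefully (it is not automatic from ``same homotopy class''), whereas the paper sidesteps this. Second, you invoke the parametric principle (Theorem~\ref{parametric Cartan}) and the density/approximation clause of Theorem~\ref{theorem: main theorem}, but the proof actually needs only the non-parametric Theorem~\ref{Cartan's thm 1}, applied twice, plus Theorem A; the parametric statement plays its role elsewhere (in the fullness/faithfulness part of the Main Theorem), not in this lemma.
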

 
    \begin{proof}
    Assume, as we may, that the open covers in question -- that is, $(U_i)$ and $(W_{a^i})$ -- comprise holomorphically convex subsets of $V$. The first step is to note that $\Gamma = \Gamma' \cup \Gamma''$, and the intersection $\Gamma' \cap \Gamma''$ is an open, relatively compact box. Let $V' = V \cap \Gamma'$ and $V'' = V\cap \Gamma''$. By assumption, $E$ is isomorphic to holomorphic $\alpha$-twisted vector bundles on $V'$ and $V''$. This means that there exist 0-cochains $({c_{a^i}}')_{a^i \in I^{(i)}, i \in I}$ and $({c_{a^i}}'')_{a^i \in I^{(i)}, i \in I}$ of $GL_n\cC$ with respect to  $\coprod_{i \in I}(W_{a^i}|_{V'})_{a^i \in I^{(i)}}$ and  $\coprod_{i \in I}(W_{a^i}|_{V''})_{a^i \in I^{(i)}}$ respectively, such that the $\alpha$-twisted vector bundles $F'$ and $F''$, defined by the 1-cochains $({f_{a^ib^j}}') = ({c_{a^i}}'^{-1}g_{a^ib^j}{c_{b^j}}')$ and $({f_{a^ib^j}}'') = ({c_{a^i}}''^{-1}g_{a^ib^j}{c_{b^j}}'')$, with respect to  $\coprod_{i \in I}(W_{a^i}|_{V'})_{a^i \in I^{(i)}}$ and  $\coprod_{i \in I}(W_{a^i}|_{V''})_{a^i \in I^{(i)}}$ respectively, are holomorphic -- that is, they take values in the sheaf $GL_n\cO$. This implies that on $V' \cap V''$, $F'$ and $F''$ are topologically isomorphic $\alpha$-twisted vector bundles. Indeed, on $U_{ij} \cap V' \cap V''$, we have 
    \[
        {f_{a^ib^j}}'' = {h_{a^i}}^{-1} {f_{a^ib^j}}' h_{b^j},
    \] 
    where $(h_{a^i})_{a^i \in I^{(i)}, i \in I}$ is given by $h_{a^i} = {c_{a^i}}'^{-1}{c_{a^i}}''$.
    
    From this, we deduce that $F'$ and $F''$ are also holomorphically isomorphic on $V' \cap V''$ -- that is, they are isomorphic in the category $(i\mathrm{Vect}_\alpha(\cU, \mathcal{O}), \square)$. Here is why: firstly, note that $V' \cap V''$, being an analytic subspace of an open cube, is Stein. Now, consider the isomorphism bundle $\mathrm{Iso}(F'', F')$ from $F''$ to $F'$. As we noted in Definition \ref{def: cat twisted}, $\mathrm{Iso}(F'', F')$ is an untwisted generalised principal bundle, and the 0-cochain $(h_{a^i})_{a^i \in I^{(i)}, i \in I}$ defines a continuous, global section of this bundle. The structure group bundle of $\mathrm{Iso}(F'', F')$ is the automorphism bundle $\Aut(F'')$ of $F''$. Since $V' \cap V''$ is Stein, and $\mathrm{Iso}(F'', F')$ is a genuine generalised principal bundle, Theorem \ref{Cartan's thm 1} tells us that there exists a holomorphic section of $\mathrm{Iso}(F'', F')$ that is homotopic to the continuous section $(h_{a^i})$. Explicitly, there is a family of continuous sections $(H_{a^i}^t)_{a^i \in I^{(i)}, i \in I}$ of $\mathrm{Iso}(F'', F')$, indexed by $t \in [0,1]$, such that $H_{a^i}^0 = h_{a^i}$, and $k_{a^i} = H_{a_i}^1$ takes values in $GL_n\cO$. In particular, for each $t \in [0,1]$, \[{f_{a^ib^j}}'' = {H^t_{a^i}}^{-1} {f_{a^ib^j}}' H^t_{b^j}.\]

    Now consider, for arbitrary fixed $a^i \in I^{(i)}, i \in I$, the space $W_{a^i} \cap V$. The pair of open sets $W_{a^i} \cap V'$ and $W_{a^i} \cap V''$ form an open cover of this space, with respect to which $h_{a^i}$ and $k_{a^i}$ are 1-cocycles -- the cocycle condition is, of course, trivially satisfied. Refining the open cover $(W_{a^i})$ if need be, we may assume that $\Aut(F'')$ is trivial over $W_{a^i} \cap V$; then, from standard principal bundle theory, we deduce that the principal bundles on $W_{a^i} \cap V$ defined by the 1-cocycles $h_{a^i}$ and $k_{a^i}$ with respect to the cover $\{W_{a^i}\cap V', W_{a^i} \cap V''\}$ are topologically isomorphic. Further, note that $h_{a^i}$ splits on $W_{a^i} \cap V$ with respect the above cover -- indeed, $h_{a^i} = {c_{a^i}}'^{-1}{c_{a^i}}''$ --  which means that the corresponding principal bundle on $W_{a^i} \cap V$ is topologically trivial. Naturally, this implies that the principal bundle defined by $k_{a^i}$ is also topologically trivial. In fact, as $W_{a^i} \cap V$ is Stein, and this principal bundle is classical and untwisted, \cite[Theorem A]{cartan1958espaces} implies that this bundle also holomorphically trivial. 
    In other words, $k_{a^i}$ splits holomorphically with respect to $\{W_{a^i}\cap V', W_{a^i} \cap V''\}$: there exist holomorphic maps ${h_{a^i}}^\prime: W_{a^i}\cap V' \to GL_n\cO$ and ${h_{a^i}}'': W_{a^i}\cap V'' \to GL_n\cO$ such that $k_{a^i} = {h_{a^i}}^{\prime-1}{h_{a^i}}^{\prime\prime}$ on $W_{a^i} \cap V^\prime \cap V^{\prime\prime}$. 
    
    This implies that ${h_{a^i}}^\prime {f_{a^ib^j}}^{\prime} {h_{b^j}}^{\prime-1} = {h_{a^i}}^{\prime \prime} {f_{a^ib^j}}^{\prime\prime} {h_{b^j}}^{\prime\prime-1}$ on $W_{a^i} \cap V^\prime \cap V^{\prime\prime}$; so, we may define a 1-cochain $(f_{a^ib^j})$ of $GL_n\cO$, with respect to $\coprod_{i \in I}(W_{a^i})_{a^i \in I^{(i)}}$, by $f_{a^ib^j} = {h_{a^i}}^\prime {f_{a^ib^j}}^{\prime} {h_{b^j}}^{\prime-1}$ on $W_{a^i} \cap V^\prime$, and $f_{a^ib^j} =  {h_{a^i}}^{\prime \prime} {f_{a^ib^j}}^{\prime\prime} {h_{b^j}}^{\prime\prime-1}$ on $W_{a^i} \cap V^{\prime\prime}$. We can also see that the 1-cochain $(f_{a^ib^j})$ satisfies $f_{a^ib^j}f_{b^jc^k} = f_{a^ic^k} \alpha_{ijk}$, so it defines a holomorphic $\alpha$-twisted vector bundle on $V$; that is, an object $F$ in the category $i\mathrm{Vect}_\alpha^n(\cU|_{V}, \cO)$. The $\alpha$-twisted bundle defined by restricting $F$ to either $V'$ or $V''$ is isomorphic to the corresponding restriction of $E$ in the category $i\mathrm{Vect}_\alpha^n(\cU|_{V'}, \cC)$ or $i\mathrm{Vect}_\alpha^n(\cU|_{V''}, \cC)$, respectively. We note, however, that these $\alpha$-twisted vector bundles need not be isomorphic over $V$. 
    
    Now, consider the bundle $\Iso(E, F)$ of isomorphisms from $E$ to $F$ over $V$. As we know, this is a genuine, generalised principal bundle with structure group bundle $\Aut(E)$. The goal of this proof is achieved by finding a holomorphic $\Aut(E)$-principal bundle which is isomorphic to $\Iso(E, F)$. 
    
    Before we proceed to find such a bundle, let us substantiate the above claim. Firstly, and evidently, finding a holomorphic $\Aut(E)$-principal bundle that is topologically isomorphic to $\Iso(E,F)$ is equivalent to fitting a compatible holomorphic structure on the bundle $\Iso(E, F)$. Once we do that, the local holomorphic trivialisations of $\Iso(E, F)$ allow us to transport  holomorphic structure from $F$ to $E$, and we would be done. To be more precise, recall that on each open set $U_i$, $E$ and $F$ are both genuine $GL_n\CC$-principal bundles, so we only need to convince ourselves that $E|_{U_i}$ has a compatible holomorphic structure for any $i \in I$, if $\Iso(E, F)$ does; and indeed, we simply declare any trivialisation of the principal bundle $E|_{U_i}$ to be holomorphic whenever its image in $F|_{U_i}$, under any local holomorphic section of $\Iso(E, F)$, is a holomorphic trivialisation of $F$. 
    
    Let us now return to finding a holomorphic $\Aut(E)$-principal bundle that is isomorphic to $\Iso(E, F)$. We know that the $\Iso(E, F)$ has sections over $V'$ and $V''$, which means that it is isomorphic to $\Aut(E)$ when restricted to these subsets. So, $\Iso(E, F)$ may be fully defined by a 1-cocycle of $GL_n\cC$ with respect to the open cover $\{V', V''\}$ of $V$. As this open cover consists of only two open sets, such a 1-cocycle is nothing but a continuous section $\varphi: V' \cap V'' \to |\Aut(E)|$ where $|\Aut(E)|$ denotes the total space of the group bundle $\Aut(E)$. As $\Aut(E)$ is itself an $\Aut(E)$-principal bundle, and $V' \cap V''$ is holomorphically convex, Theorem \ref{Cartan's thm 1} applies to the section $\varphi$ -- it may be continuously deformed to a holomorphic section $\psi: V' \cap V'' \to |\Aut(E)|$. The section $\psi$ is also a 1-cochain -- and trivially, a 1-cocycle -- of $GL_n\cO$ with respect to the open cover $\{V', V''\}$ of $V$, and so, by Corollary \ref{cor: homotopic implies iso}, the holomorphic $\Aut(E)$-principal bundle it defines is isomorphic to $\Iso(E, F)$, as we sought.
    \end{proof}

    We can now prove the culminating result of this paper.

    
\noindent \textit{Proof of Main Theorem.}
Let us fix an open cover $\cU = (U_i)_{i \in I}$ of $X$ on which $\alpha$ can be represented as a completely normalised 2-cocycle. We show that the strict monoidal functor \[(i\mathrm{Vect}_\alpha(\cU, \cO), \square) \to (i\mathrm{Vect}_\alpha(\cU, \cC), \square)\] of simplicial symmetric monoidal categories, induced by the inclusion $\cO \to \cC$, is a weak equivalence of simplicial symmetric monoidal categories. By Theorem \ref{thm: K-theory equiv}, this will imply that the map $K_\alpha^{-n, \cO}(X) \to K_\alpha^{-n, \cC}(X)$ is a group isomorphism. To establish the result, we prove the following:
\begin{enumerate}
    \item[(1)] given $\alpha$-twisted holomorphic vector bundles $E, F \in \mathrm{ob(i\mathrm{Vect}_\alpha(\cU, \cO))}$, the induced map on hom-sets \[i\mathrm{Vect}_\alpha(\cU, \cO)(E, F) \to i\mathrm{Vect}_\alpha(\cU, \cC)(E, F)\] is a weak equivalence of simplical sets, and 
    \item[(2)] any $\alpha$-twisted topological vector bundle $E \in \mathrm{ob(i\mathrm{Vect}_\alpha(\cU, \cC))}$ is isomorphic to an $\alpha$-twisted holomorphic vector bundle $F \in \mathrm{ob(i\mathrm{Vect}_\alpha(\cU, \cO))}$. 
\end{enumerate}
Firstly, we can see that (1) is a direct consequence of the Theorem \ref{parametric Cartan}. Indeed, the map \[i\mathrm{Vect}_\alpha(\cU, \cO)(E, F) \to i\mathrm{Vect}_\alpha(\cU, \cC)(E, F)\] of hom-sets is, by definition,
\[ \mathrm{Sing}(\Gamma_\cO(\Iso(E, F))) \to \mathrm{Sing}(\Gamma_\cC(\Iso(E, F)))
    \] where $\Gamma_\cO(\Iso(E, F))$ (respectively, $\Gamma_\cC(\Iso(E, F))$) denotes the space of holomorphic (respectively, continuous) sections of the isomorphism bundle from $E$ to $F$ (see Definition \ref{def: cat twisted}). The isomorphism bundle between $\alpha$-twisted vector bundles is, per Definition \ref{def: cat twisted}, an untwisted generalised principal bundle,  with  $GL_n\CC$-torsors for fibres. As the singular functor defines a Quillen equivalence between the categories of topological spaces and simplicial sets, the map above is a weak equivalence of simplicial sets if and only if the map $\mathrm\Gamma_\cO(\Iso(E, F)) \to \Gamma_\cC(\Iso(E, F))$ is a weak equivalence of topological spaces. To this map we now apply the parametric Oka principle for generalised principal bundles (Theorem \ref{parametric Cartan}), which says that for a generalised principal bundle over a Stein space, the inclusion of the space of holomorphic sections into the space of continuous sections is a weak equivalence. This completes the proof of (1).

The proof of (2) is more algebraically involved; it follows similar arguments to the proof of \cite[Theorem B]{cartan1958espaces}, but for $\alpha$-twisted cocycles. Consider an $\alpha$-twisted topological vector bundle $E$. Say it is defined, per Definition \ref{def: cat twisted}, by the following data: an open cover $(W_{a^i})_{{a^i} \in I^{(i)}}$ of $U_i$ for each $i \in I$, and a 1-cochain $\left(g_{a^{i}b^{j}}\right)_{a^i\in I^{(i)}, b^j\in I^{(j)}, i,j \in I}$ of the sheaf $GL_n\cC$, with respect to the open cover $\coprod_{i \in I}(W_{a^i})_{a^i \in I^{(i)}}$ of $X$. (Again, whenever clear, we will simply write $(g_{a^ib^j})$, $(W_{a^i})$ and $\coprod_{i \in I}(W_{a^i})$.)

It is evident that every sufficiently small $V \subset X$ is good (take $V$ small enough that it is contractible).
By ordinary principle bundle theory, on such an open set $V$, every $\Aut(E)$-principal bundle -- in particular, every holomorphic $\Aut(E)$-principal bundle -- is topologically trivial. Our goal now is to show that $X$ is good.

The proof proceeds in two parts using a bootstrap argument. In the first part, we establish that if $V$ is the interior of a special kind of compact set $K \subset X$, then $V$ is good. Then, in the second part, we take an exhaustion of $X$ by such compact sets $K$, and use the initial case to extend the property to all of $X$.

\noindent \textit{Part 1:} Let $K \subset X$ be a compact set for which there exists an isomorphism $\phi$ from a neighbourhood of $K$ to an analytic subspace $A$ of an open, relatively compact box $\Gamma \subset \CC^n$ (such an isomorphism exists by classical results; see \cite[\S 1]{cartan1958espaces}). Denote by $V \subset X$ the interior of $K$. We want to show that $V$ is good. For this, let us view $\Gamma$ as a subset of $\RR^{2n}$ and write it as a product $J_1 \times J_2 \times \cdots \times J_k$, where $J_m$ is an interval in $\RR$, and $k$ is the real dimension of $\Gamma \subset \RR^{2n}$. Cover each edge $J_m$ by open intervals $(J_m^\omega)_{\omega \in \Omega_m}$, where $\Omega_m$ is a finite set $\{1, 2, \ldots, q_m\}$, so that 
\begin{enumerate}
    \item the collection of sets $\Gamma_{\omega_1, \ldots, \omega_k}= J_1^{\omega_1} \times \cdots \times J_k^{\omega_k}$, for $(\omega_1, \ldots, \omega_k) \in \Omega_1 \times \cdots \times \Omega_k$, is a finite open cover of $\Gamma$, and
    \item each open set $V_{\omega_1, \ldots, \omega_k}= V \cap \phi^{-1}(\Gamma_{\omega_1, \ldots, \omega_k})$ is good. 
\end{enumerate}
Note that the collection of the sets $V_{\omega_1, \ldots, \omega_k}$ is an open cover of $V$.
For $0 \leq m \leq k$, and an  arbitrary fixed $(\omega_{m+1}, \ldots, \omega_k) \in \Omega_{m+1} \times \cdots \times \Omega_k$, consider \[V_{\omega_{m+1}, \ldots, \omega_k} : = \bigcup\limits_{(\omega_1, \ldots, \omega_m) \in \Omega_1 \times \cdots \times \Omega_m} V_{\omega_1, \ldots, \omega_k}.\] Then, by Lemma \ref{again compact box induction}, and induction on $m$, we find that each set $V_{\alpha_{m+1}, \ldots \alpha_k}$ is good -- therefore, $V$ is good.

\noindent \textit{Part 2:}  We fix an exhaustion $(K_n)_{n \geq 0}$ of $X$ consisting of special compact sets $K_n \subset X$ -- that is, a compact set which has a neighbourhood that is isomorphic to an analytic subspace of an open, relatively compact box $\Gamma \subset \CC^m$. It is always possible to find such an exhaustion of a Stein space $X$: any neighbourhood of the holomorphically convex hull $\hat{K}$ of a holomorphically convex, compact subset $K \subset X$, contains a special compact set which contains $\hat{K}$. Assume that each compact set $K_n$ lies in the interior $V_{n+1}$ of $K_{n+1}$. From Part 1, we know that $V_n$ is good for each $n \geq 0$. We now wish to prove that $X$ is good.

Assume, as we may, that each open set $W_{a^i}$ is relatively compact, and that if $W_{a^i} \cap V_n$ is nonempty, then $W_{a^i} \subset V_{n+1}$. We know that on each $V_n$, there exists a holomorphic $\alpha$-twisted bundle $F_n$, given by a 1-cochain $\left(f_{a^{i}b^{j}}^n\right)$ of the sheaf $GL_n\cO$, along with 0-cochain $(c_{a^{i}}^n)$ of $GL_n\cC$, both with respect to the open cover $\coprod_{i \in I}(W_{a^i}|_{V_n})$ of $V_n$, such that \[{c^n_{a^{i}}}\, g_{a^{i}b^{j}} {(c^n_{b^{j}})}^{-1}= f^n_{a^{i}b^{j}}.\]

It is evident that the $\alpha$-twisted holomorphic vector bundles $F_n$ and $F_{n+1}$, each being isomorphic to $E$ on $V_n$ and $V_{n+1}$ respectively, are themselves isomorphic in the category $i\mathrm{Vect}_\alpha(\cU|_{V_n}, \cC)$.  The cochain $((c_{a^i}^n)^{-1})$ is, of course, a continuous section of the generalised principal bundle $\Iso(F_n, E|_{V_n})$; similarly, $({c_{a^i}}^{n+1})$ is a continuous section of $\Iso(E|_{V_{n+1}}, F_{n+1})$. Composing $((c_{a^i}^n)^{-1})$ with $({c_{a^i}}^{n+1})$ gives us an isomorphism from $F_n$ to $F_{n+1}|_{V_n}$ -- expressly, it is given by the 0-cochain $(d_{a^i}^n)$, where $d_{a^i}^n = c_{a^i}^{n+1}(c_{a^i}^n)^{-1}$. Since $(d_{a^i}^n)$ is a section of the generalised principal bundle $\Iso(F_n, F_{n+1}|_{V_n})$, Theorem \ref{Cartan's thm 1} tells us that this section may be deformed, through continuous sections of $\Iso(F_n, F_{n+1}|_{V_n})$, to a holomorphic section. That is, there exists a family $(D_{a^i}^n(t))$, $t \in [0,1]$, of 0-cochains of $GL_n\cC$ with respect to $\coprod_{i \in I}(W_{a^i}|_{V_n})$, such that 
\begin{enumerate}
\item $f^n_{a^{i}b^{j}} = D_{a^i}^n(t)^{-1}f^{n+1}_{a^{i}b^{j}}D_{b^j}^n(t)$ for all $t \in [0,1]$,
\item $D_{a^i}^n(0) = d_{a^i}^n = c_{a^i}^{n+1}(c_{a^i}^n)^{-1}$, and
\item $(D_i^n(1))$ takes values in the sheaf $GL_n\cO$.
\end{enumerate}
This of course implies that the section of $\Iso(F_n, E|_{V_n})$ given by $((c_{a^i}^n)^{-1})$ may be continuously deformed, through such sections, to the section of $\Iso(F_n, E|_{V_n})$ defined by $((c_{a^i}^{n+1})^{-1}D_{a^i}^n(1))$. We will inductively modify the $\alpha$-twisted bundles $F_n$ so that they remain holomorphic, and isomorphic to $E$ over $V_n$, but ensuring further that $F_{n+1} \vert_{V_{n-2}} =F_{n} \vert_{V_{n-2}}$. 

Assume we have done this for $F_m$ for all $m \leq n$. Let $c_{a^i}^{n+1}$ remain the same if $W_{a^i} \cap V_{n-1}$ is empty; if not, replace $c_{a^i}^{n+1}(x)$ with $(D_{a^i}^n(\lambda(x))(x))^{-1}c_{a^i}^{n+1}(x)$ for $x \in W_{a^i} \cap V_n$, where $\lambda: K_n \to [0,1]$ is a continuous function with $\lambda = 0$ on $V_{n-2}$ and $\lambda = 1$ on $K_n \setminus V_{n-1}$.

By induction, we modify the 0-cochains $(c_{a^i}^n)$ for each $n \geq 0$ in this way. If we denote by $(c_{a^i})$ the pointwise limit, then the $\alpha$-twisted 1-cocycle  ${c_{a^{i}}}\, g_{a^{i}b^{j}} {c_{b^{j}}}^{-1}$ takes values in the sheaf $GL_n\cO$ -- in other words, it defines an $\alpha$-twisted holomorphic vector bundle $F$ which is isomorphic to $E$, as we sought. This concludes the proof of (2).

Lastly, say we are given another reduced Stein space $Y$, along with a holomorphic map $\phi: Y \to X$. For naturality, we want the diagram 
\[\begin{tikzcd}
    K_\alpha^{\cO}(\cU) \arrow{r}  \arrow[swap]{d}{K (\phi_\cO^*)}  & K_\alpha^{\cC}(\cU) \arrow{d}{K (\phi_\cC^*)} \\
    K_{\phi^*\alpha}^{\cO}(\phi^{-1}(\cU)) \arrow{r} & K_{\phi^*\alpha}^{\cC}(\phi^{-1}(\cU))
    \end{tikzcd}
    \] to commute in the category of topological spaces. Indeed, this follows from Propositions \ref{prop: Lambda}, \ref{prop: funct 2}, and \ref{prop: funct 3}; it is a direct consequence of the way the categories and morphisms above are defined. 
\qed

\section{Comparison with existing definitions of higher twisted K-theory} \label{sec 5}

In this final section, we outline an approach for comparing our definition of twisted K-theory for Stein spaces with existing notions of topological twisted K-theory in the literature, and sketch the key ideas and steps that would lead to proving consistency between these definitions. Closely related ideas, though in the context of Banach algebras and using different methods, were studied by M.~Paluch in his PhD thesis \cite{paluch1991algebraic}.

Let $X$ be an \textit{admissible} space -- that is, a paracompact, Hausdorff space which can be expressed as a countable union of compact subsets, and is of finite topological dimension. In particular, a reduced Stein space $X$ is admissible. Let $\alpha \in H^3(X,\ZZ)$ be torsion. Firstly, fix an $\alpha$-twisted bundle $E$, given with respect to an open cover $\cU$ of $X$ on which $\alpha$ can be represented\footnote[1]{It is not true in general that there exists an $\alpha$-twisted vector bundle over $X$ for a given $\alpha$; however, we interest ourselves only in spaces which admit $\alpha$-twisted vector bundles. For example, when $X$ has the homotopy type of CW complex, an $\alpha$-twisted vector bundle may always be found \cite{karoubi2012twisted}.}. Several established approaches to twisted K-theory (for example, \cite{atiyah2004twisted,karoubi2012twisted,bouwknegt2002twisted}) restrict attention to compact spaces or finite covers whenever the twisting class $\alpha \in H^3(X, \mathbb{Z})$ is torsion. It can be shown that when $X$ is compact, the definition we provide of topological twisted K-theory is consistent with these constructions; in other words, our definition is a natural generalisation to the class of admissible spaces, compact or otherwise. The particular aim of this section is to sketch a proof of this claim.

For an $\alpha$-twisted vector bundle $E$, let $\End(E)$ be the \textit{endomorphism bundle} of $E$ over $X$ -- this is an algebra bundle whose fibres are isomorphic $\End(\CC^n)$, which is the algebra $M_n\CC$ of $n \times n$ matrices with complex coefficients; it is constructed analogously to the group bundle $\Aut(E)$, whose fibres are isomorphic to $GL_n\CC$ (see \cite{karoubi2012twisted} for more details). There are two main steps to proving the claim above. The first step is to show that 
the following topologically enriched, symmetric monoidal categories are equivalent:
\begin{enumerate}
    \item $(i\mathrm{Vect}_\alpha(X, \cC), \oplus)$, the topological category of $\alpha$-twisted vector bundles on $X$, with only isomorphisms, per Definition \ref{def: cat twisted};
    \item $(i\mathrm{Mod}(X)_{\mathrm{End}(E)}, \oplus)$, the topological category of untwisted, complex vector bundles with right $\mathrm{End}(E)$-action, with direct sum, and only isomorphisms;
    \item $(i\mathrm{Mod}^{\mathrm{fg,proj}}(X)_{A}, \oplus)$, the topological category of finitely generated projective $A$-modules, with direct sum, and only isomorphisms, where $A$ is the Fr\'echet algebra $\Gamma(X, \mathrm{End}(E))$. 
\end{enumerate} 


We know, by Theorem \ref{thm: K-theory equiv}, that if these topological symmetric monoidal categories -- equivalently viewed as simplical symmetric monoidal categories -- are even weakly equivalent, they induce isomorphic K-theories per Definition \ref{k theory def}. 

The second step is to show that the K-theory of $(i\mathrm{Mod}^{\mathrm{fg,proj}}(X)_{A}, \oplus)$ defined by the enriched $S^{-1}S$ construction (Definition \ref{k theory def}) agrees with any of the equivalent topological twisted K-theory definitions found in the literature (for example, the definition given in \cite{karoubi2012twisted}), when $X$ is a compact Hausdorff space. 

\subsection*{\normalsize{Step 1}}
The equivalence between $(i\mathrm{Vect}_\alpha(X, \cC), \oplus)$ and $(i\mathrm{Mod}(X)_{\mathrm{End}(E)}, \oplus)$ is detailed in \cite[Theorem 3.5]{karoubi2012twisted}. Note that while the construction of the category of $\alpha$-twisted vector bundles is slightly different in \cite{karoubi2012twisted}, the arguments in this case apply identically. Moreover, although only the discrete, non-symmetric-monoidal case is treated, it is evident that the topological structure of the hom-sets is preserved,  as is the symmetric monoidal structure, which is just direct sum for both categories. The equivalence between the categories $(i\mathrm{Mod}(X)_{\mathrm{End}(E)}, \oplus)$ and $(i\mathrm{Mod}^{\mathrm{fg,proj}}(X)_{A}, \oplus)$ follows from an adaptation of the Serre-Swan-Vaserstein theorem for admissible spaces \cite{vaserstein1986vector}. This theorem generalises the well-known Serre-Swan theorem from compact to admissible spaces; and, similar to its classical counterpart, a key step in the proof is showing that for any finite-dimensional complex vector bundle $F$, there exists a vector bundle epimorphism from a trivial bundle to $F$, which is then used to construct a finitely generated projective $\cC(X)$-module. Our context requires the following modification. For admissible spaces, finite-dimensional complex vector bundles are generated by finitely many global sections. This applies in particular to $\mathrm{End}(E)$, which implies that for any finite-dimensional complex vector bundle $F$ with right $\mathrm{End}(E)$-action, there exists a vector bundle epimorphism $\oplus_{m} \mathrm{End}(E) \to F$ for some $m \in \mathbb{N}$. From this point, the remainder of the Serre-Swan proof adapts naturally, with $A$-projectivity taking the role of $\cC(X)$-projectivity. Moreover, as with the previous equivalence of categories, it is not difficult to note that the topological structures of hom-sets are preserved, and the symmetric monoidal structure is respected.

\subsection*{\normalsize{Step 2}}
We begin by noting that the Fr\'echet algebra $A= \Gamma(X, \End(E))$ is unital and locally multiplicatively convex. For $n \geq 0$, let $K^{\mathrm{fr}}_n(A)$ be the Grothendieck group $K_0(A)$ of isomorphism classes of finitely generated projective $A$-modules when $n$ is even, and let $K^{\mathrm{fr}}_n(A)= \pi_0(GL^\mathrm{top}(A))$ when $n$ is odd. Equivalently, $$K^{\mathrm{fr}}_n(A) = \pi_n(K_0(A) \times BGL^\mathrm{top}(A))$$ for $n \geq 0$.  In \cite[Theorem 7.7]{phillips1991k}, N.~C.~Phillips shows that for any unital Fréchet algebra $A$, the above definition agrees with his more general definition of K-theory for Fr\'echet algebras presented in the same paper -- what appears to be the first systematic treatment of Fr\'echet algebra K-theory. Note that when $X$ is compact, $A = \Gamma(X, \End(E))$ is a unital $C^*$-algebra, and the above definition of $K_n^{\mathrm{fr}}(A)$ precisely coincides with established definitions of $\alpha$-twisted K-theory $K_\alpha^{-n}(X)$ of $X$ as found in, for example, \cite{atiyah2006twisted}. It remains to show that for all $n \geq 0$, \[K^{\mathrm{fr}}_n(A) = \pi_n\lvert N\,(i\mathrm{Mod}^{\mathrm{fg,proj}}(X)_{A})^{-1}i\mathrm{Mod}^{\mathrm{fg,proj}}(X)_{A}\rvert.\]

Let $M$ be the topological monoid given by taking the skeletal subcategory of the topological category $(i\mathrm{Mod}^{\mathrm{fg,proj}}(X)_{A}, \oplus)$; that is, \[M = \coprod_{[P]}\mathrm{Aut}^\mathrm{top}(P),\] as $P$ runs over the set of isomorphism classes of finitely generated projective $A$-modules.
In this case, $\Omega B(BM)$ is an infinite loop space, and the natural map $BM \to \Omega B(BM)$ is a group completion. It is well known that the output of an infinite loop space machine (such as the $S^{-1}S$ process) is unique up to homotopy, and topological symmetric monoidal categories are acceptable inputs to these machines \cite[pp.~331, 338]{weibel2013k}. This implies that, for all $n \geq 0$,
\[ \pi_n\lvert N\,(i\mathrm{Mod}^{\mathrm{fg,proj}}(X)_{A})^{-1}i\mathrm{Mod}^{\mathrm{fg,proj}}(X)_{A}\rvert = \pi_n (\Omega B(BM)).
\]

Firstly, for $n=0$, there is nothing for us to do -- we know that the $K_0$-output \[\pi_0\lvert N\,(i\mathrm{Mod}^{\mathrm{fg,proj}}(X)_{A})^{-1}i\mathrm{Mod}^{\mathrm{fg,proj}}(X)_{A}\rvert\] of the $S^{-1}S$ process equals the Grothendieck group $K_0(A)$ of isomorphism classes of finitely generated projective $A$-modules; indeed, the $\pi_0$ case effectively ignores the topological enrichment of $i\mathrm{Mod}^{\mathrm{fg,proj}}(X)_{A}$, and treats it as a discrete symmetric monoidal category. 

We argue as follows for $n \geq 1$. Since the monoidal functor \[f: i\mathrm{Mod}^{\mathrm{free}}(X)_A \to i\mathrm{Mod}^{\mathrm{fg,proj}}(X)_{A},\] where $i\mathrm{Mod}^{\mathrm{free}}(X)_A$ is the subcategory of free $A$-modules, is cofinal, for an arbitrary connected component $\Omega B (BM)_0$ of $\Omega B(BM)$,

\begin{align*}
  H_n(\Omega B (BM)_0) &= \coprod_{[P]} H_n(BM) \tag{$*$} \label{eq:groupcompletion}\\
  &= \coprod_{[P]} H_n(B\mathrm{{Aut}^{top}}(P)) \\
  &= \coprod_{[F]} H_n (B\mathrm{{Aut}^{top}}f(F)) \\
  &= \coprod_{[F]} H_n (B\mathrm{{Aut}^{top}}(F)) \\
  &= \coprod_{m \geq 0} H_n (BGL^{\mathrm{top}}_m(A)) \\
  &= H_n(BGL^{\mathrm{top}}A),
\end{align*}
where $F$ is a free $A$-module, and the subscripts $[F]$ and $[P]$ indicate colimits over the \textit{translation categories} (see \cite{weibel2013k}) of $\pi_0(B(i\mathrm{Mod}^{\mathrm{free}}(X)_A))$ and $\pi_0(B(i\mathrm{Mod}^{\mathrm{fg,proj}}(X)_{A}))$ respectively (this is a standard cofinality argument; see, e.g., \cite[Theorem 4.11]{weibel2013k}). The line ($*$) is the group completion theorem, as in \cite[Theorem 3.2.1]{adams1978infinite}. Since $\Omega B(BM)_0$ is a connected, group-like H-space, we deduce that $\Omega B (BM)_0$ and $BGL^{\mathrm{top}}A$ are homotopy equivalent spaces. Finally, as the path components $\Omega B(BM)$ are homotopy equivalent to one another, $\pi_n(\Omega B (BM)) = \pi_n(K_0(A) \times BGL^{\mathrm{top}}A) = K^{\mathrm{fr}}_n(A)$, as we sought.

\bibliography{bibliography}
\end{document}